\documentclass{amsart}
\usepackage{mathtools} 
\usepackage{amssymb}
\usepackage{booktabs}
\usepackage{mathrsfs}
\usepackage{psfrag}
\usepackage{xr-hyper}
\usepackage[colorlinks]{hyperref}
\externaldocument[besov-]{besovish-2020-01-01}


\subjclass[2010]{ 37C30,  30H25,  42B35, 42C15, 42C40} 

\keywords{ atomic decomposition, Besov space, harmonic analysis, wavelets}


\title{Classic and exotic Besov spaces induced by good grids}


\author[D.  Smania]{Daniel Smania}
\address{Departamento de Matem\'atica \\
  Intituto de Ci\^encias Matem\'aticas e da Computa\c{c}\~ao-Universidade de S\~ao Paulo (ICMC/USP) - S\~ao Carlos \\
               Caixa Postal 668 \\ S\~ao Carlos-SP \\ CEP 13560-970 \\ Brazil.}
\email{smania@icmc.usp.br} 
\urladdr{http://conteudo.icmc.usp.br/pessoas/smania/}
\thanks{D.S. was partially supported by CNPq 307617/2016-5, CNPq 430351/2018-6, CNPq 306622/2019-0 and FAPESP Projeto Tem\'atico 2017/06463-3.}


\newtheorem{theorem}{Theorem}[section]
\newtheorem{corollary}[theorem]{Corollary}

\newtheorem{lemma}[theorem]{Lemma}
\newtheorem{proposition}[theorem]{Proposition}

\theoremstyle{definition}
\newtheorem{definition}[theorem]{Definition}
\newtheorem{remark}[theorem]{Remark}


\usepackage{constants} 
\newcommand{\secdot}[1]{\arabic{#1}}
\newconstantfamily{c}{
symbol=\lambda,
format=\secdot,
}
\renewconstantfamily{normal}{
symbol=C,
format=\secdot,
}

\newconstantfamily{e}{
symbol=\theta,
format=\arabic,
}

\newconstantfamily{A}{
symbol=A,
format=\arabic,
}

\newconstantfamily{G}{
symbol=G,
format=\arabic,
}
\usepackage{ltxcmds}[2011/04/14]
\makeatletter

\newcommand{\Cll}[2][normal]{\Cl[#1]{#2}}
\newcommand{\Crr}[1]{\Cr{#1}}
\def\@CatchSuperScriptNegKern^#1{%
  ^{\!\!#1}%
}
\makeatother


\usepackage[colorinlistoftodos, textwidth=4cm, shadow]{todonotes}

\hypersetup{ colorlinks=true, filecolor={black}, pdftitle={Besovish spaces through  atomic decomposition}, pdfsubject={  atomic decomposition, Besov space}, pdfauthor={Daniel Smania},pdfkeywords={  atomic decomposition, Besov space, harmonic analysis, wavelets}}

\begin{document}

\begin{abstract} 
In a previous work we introduced Besov spaces $\mathcal{B}^s_{p,q}$ defined on a measure spaces with a good grid, with $p\in [1,\infty)$, $q\in [1,\infty]$ and $0< s< 1/p$. Here we show that classical Besov spaces on compact homogeneous spaces are examples of such Besov spaces. On the other hand we show that even Besov spaces defined  by a good grid made of partitions by  intervals   may differ from a classical Besov space, giving birth to exotic Besov spaces.\end{abstract}

\maketitle

\setcounter{tocdepth}{3}
\tableofcontents

\newpage
\section{Introduction}

We defined Besov spaces $\mathcal{B}^s_{p,q}$, with $p\in [1,\infty)$, $0< s< 1/p$ and $q\in [1,\infty]$, on   a  measure spaces with  a good grid \cite{smania-besov}.  Those spaces are defined by atomic decomposition using very simple atoms consisting  of  piecewise constant functions.

Similar but more general than  Gu-Taibleson \cite{martingale} recalibrated martingale Besov spaces, it allows us to carry   results for classic  Besov spaces in  $\mathbb{R}^n$ to  measure spaces with  a good grid, and often the proofs there are indeed simpler and more elementary. This includes results on multipliers, atomic decomposition  and left and right compositions.\\

\subsection{Measure spaces with good grids} 
A {\it measure space with a good grid}  is a set $I$ endowed with a $\sigma$-algebra $\mathbb{A}$ and   a measure $m$  on $(I,\mathbb{A})$, $m(I)<  \infty$.  For every measurable set $S$ denote $|S|=m(S)$. A {\bf good grid}  on $I$  is a sequence of finite  families of measurable sets with positive measure  $\mathcal{P}= (\mathcal{P}^k)_{k\in \mathbb{N}}$ so that
\begin{itemize}
\item[i.] Every family $\mathcal{P}^k$ is a partition of  $I$ up to sets of zero measure. 
\item[ii.] The family
$$\cup_k \mathcal{P}^k$$
generates the $\sigma$-algebra $\mathbb{A}$.
\item[iii.] There is $\lambda,\hat{\lambda}\in (0,1)$ such that 
$$\hat{\lambda} \leq \frac{|Q|}{|P|} \leq \lambda$$
for  all $Q\subset P$ such that  $Q\in \mathcal{P}^{k+1}$ and $P\in \mathcal{P}^{k}$ for some $k\geq 0$. 
\end{itemize}

\subsection{Besov space on measure spaces with good grid} 
  For each $Q\in \mathcal{P}$ consider the function  $a_Q$  defined by $$a_Q(x)=|Q|^{s-1/p}$$
for every $x \in Q$ and  $a_Q(x)=0$ otherwise. The function $a_Q$ is the  Souza's canonical atom on $Q$.  The {\it Besov space}  $\mathcal{B}^s_{p,q}$  is  the space of all  functions $f$ in the Lebesgue space  $L^p$ that  can  be represented by an absolutely convergent series on $L^p$
\begin{equation} \label{rep55} f = \sum_{k=0}^{\infty} \sum_{Q \in \mathcal{P}^k}    s_Q a_Q\end{equation}
where $s_Q \in \mathbb{C}$ and additionally 
\begin{equation} \label{rep2} \big( \sum_{k=0}^{\infty} (\sum_{Q \in \mathcal{P}^k}    |s_Q|^p)^{q/p} \big)^{1/q} < \infty.\end{equation}
The r.h.s. of (\ref{rep55}) is  a $\mathcal{B}^{s}_{p,q}$-representation of $f$. Define
\begin{equation} \label{norm} |f|_{\mathcal{B}^s_{p,q}} = \inf  \big( \sum_{k=0}^{\infty} (\sum_{Q \in \mathcal{P}_k}     |s_Q|^p )^{q/p} \big)^{1/q} ,\end{equation} 
where the infimum runs over all possible representations of $f$ as in (\ref{rep55}).    Then  $(\mathcal{B}^s_{p,q},|\cdot|_{\mathcal{B}^s_{p,q}})$ is  a complex Banach space  and its unit ball is compact in $L^p$ (see \cite{smania-besov}).

\subsection{Comparing Besov spaces} There is a large body of literature on  Besov  spaces.  See Stein \cite{stein}, Peetre \cite{peetre},  Triebel \cite{tbook} and the references therein.  Since Besov \cite{besov} defined Besov spaces $\mathbb{R}^n$ in late 50's, there is a long and ongoing quest to extend  Besov spaces (and indeed harmonic analysis)  to settings with weaker structure.  Han and  Sawyer \cite{hs} and   Han,  Lu and Yang  \cite{han2}  defined  Besov spaces on {\it homogeneous spaces}. Those are a large class of quasi-metric  spaces endowed with a doubling measure, introduced by Coifman and Weiss \cite{cw}.  There are also definitions of Besov spaces on metric spaces and $d$-sets. See  Alvarado and   Mitrea \cite{sharp} and Koskela, Yang, and Zhou \cite{koskela2} and  Triebel \cite{fractal}\cite{fractal2}. \\

Our goal here is to  {\it compare}  the Besov spaces of a homogeneous space with the ones defined on a measure spaces with a good grid. While the former is certainly a far-reaching  generalisation of Besov spaces on $\mathbb{R}^n$,   the latter  may at first glance looks like   an  artificial  {\it dyadic version} of the classical Besov spaces, a sort of simplistic model  of  more complex situations.  {\it  As it turn out,  this couldn’t be farther from the truth.} \\

Indeed there are earlier works that give  atomic decompositions to  classical Besov spaces where the atoms are piecewise constant functions. We cite the  atomic decomposition of the Besov space $B^s_{1,1}([a,b])$, with $s\in (0,1)$,  by de Souza \cite{souzao1} (see De Souza \cite{souzao2} and de Souza, O'Neil and Sampson  \cite{sn}), some  results on B-spline atomic decomposition of the Besov space  of the unit cube of $\mathbb{R}^n$ by  DeVore and  Popov \cite{spline2}, as well results   on finite element approximation in bounded polyhedral domains on $\mathbb{R}^n$ by Oswald \cite{oswald0} \cite{oswald}.\\

In part I, we consider compact homogeneous spaces, and using the famous dyadic "cubes" constructed by Christ \cite{christ} (see also Hyt\"onen and  Kairema \cite{hk}  for recent results on dyadic cubes in homogeneous spaces), we show that its Besov spaces, as defined by Han,  Lu and Yang  \cite{han2}, are a {\it particular case of the Besov spaces induced by grids}.   Note that   Yang \cite{yang} already proved that Besov spaces on $d$-sets defined by Triebel \cite{fractal} also coincide with Besov spaces on these sets considering them as homogeneous spaces. We also observe  that Gu-Taibleson recalibrated martingale Besov spaces \cite{martingale}  {\it are}  Besov spaces of certain compact  homogeneous spaces (see Section \ref{gu}). \\

\noindent {\it  Besov spaces on compact homogeneous spaces, with $p \in [1,\infty),q\geq 1$ and small $s$ satisfying $0 < s < 1/p$, and in particular recalibrated martingale Besov spaces,   can be retrieved as  particular examples of Besov spaces in measure spaces with a good grid. }
\vspace{5mm}

On the other hand in Part II we show that, even if we consider $[0,1]$ with a grid formed by  a nested sequence of partitions by  intervals, but that it  is not "recalibrated" as in Gu and Taibleson \cite{martingale}, then it may happens that the induced Besov space does {\it not } coincide with the Besov space of  $[0,1]$.  Those "exotic" Besov spaces are indeed useful to study certain transfer operators  associated to expanding maps \cite{smania-transfer}.\\

Yet  our construction of Besov spaces $B^s_{p,q}$ with low regularity $s$ is likely to be the  simplest construction  available.

\vspace{1cm}
\addcontentsline{toc}{chapter}{\bf I. THE BEST OF ALL POSSIBLE WORLDS.}

\centerline{ \bf I. THE BEST OF ALL POSSIBLE WORLDS. }
\vspace{1cm}

\section{Besov spaces on compact homogeneous spaces} \label{beshom}

\subsection{Homogeneous spaces}
Let $I$ be a set. A  quasi-metric $\rho(\cdot,\cdot)$ in  $I$ is a real-valued function in $I\times I$  satisfying 
\begin{itemize}
\item[HS1.] We have  $\rho(x,x)=0$,
\item[HS2.] If $x\neq y$ then $\rho(x,y) > 0$,
\item[HS3.] We have $\rho(x,y)=\rho(y,x)$,
\item[HS4.] There exists $\Cll{pm}\geq 1$ such that 
$$\rho(x,z)\leq \Crr{pm}( \rho(x,y)+\rho(y,z))$$
\end{itemize}
For every $\gamma > 0$ the function  $\rho^\gamma$ is also a quasi-metric on $I$. A  quasi-metric $\rho'$ is  equivalent to $\rho$ if  
$$\frac{1}{C} \rho'(x,y)  \leq  \rho(x,y)  \leq C \rho'(x,y) $$
for some $C\geq 1$. We say that two quasi-metrics $\rho$ and $\rho'$ are power-law equivalent if there exists $\gamma >0$ such that $\rho'$ is equivalent to $\rho^\gamma$.


A homogeneous space $(I,\rho,m)$, introduced by Coifman and Weiss \cite{cw},   is a topological space $I$ endowed with a quasi-metric $\rho(\cdot,\cdot)$ and a borelian measure $m$  that satisfies
\begin{itemize}
\item[HS5.] The measure $m$ is a doubling measure, that is, there is $C > 0$  such that 
$$0 < m(B_\rho(x,r)) < C m(B_\rho(x,r/2)) < \infty.$$
for every $x \in I$ and $r > 0$.
\end{itemize}
Note that  if $\rho'$ is power-law equivalent to $\rho$ then  $(I,\rho',m)$ is also a homogeneous space.  By  Macias and Segovia \cite{qsm1}, replacing $\rho$ by an equivalent quasi-metric we can also assume
\begin{itemize}
\item[HS6.] The quasi-balls $B_\rho(x,r)$, with $r> 0$, are open sets. 
\end{itemize}
We will assume that   $m(\{x\})=0$ for every $x \in I$, $I$ is compact and  $m(I)=1$. 

\subsection{Induced Ahlfors regular quasi-metric space.} \label{alf}  For every homogeneous space  as in  the  previous section we can associated an Ahlfors regular quasi-metric space, that is, a triple $(I, m,d,D)$, where $d$ is a quasi-metric, $D> 0$ and 
\begin{equation}\label{alf2}  \frac{1}{\Cll{all}} r^{D}   \leq m(B_{d}(x,r))\leq \Crr{all} r^{D}.\end{equation} 
provided $r\leq r_0$, for some $\Crr{all}, r_0 > 0$.  Indeed $d$ is defined up to power law equivalence, but that will be enough to ours purposes.

Indeed by  Macias and Segovia \cite{qsm1} if we define 
\begin{equation}\label{ms} \beta(x,y)=\inf \{m(B)\colon  \text{  $B$  is a $\rho$-ball that contains $\{x,y\}$ }     \}\end{equation} 
then $\beta$ is a  quasi-metric on $I$ (that it is not necessarily equivalent to $\rho$) such that $(I,\beta,m)$ is a homogeneous space satisfying
\begin{equation}\label{dou}  \frac{1}{\Crr{al}} r\leq m(B_\beta(x,r))\leq \Crr{al} r  \end{equation}
provided $r\leq r_0$, for some $\Cll{al}, r_0 > 0$, so $(m,\beta,1)$ is a Ahlfors regular quasi-metric space. Note that if $\rho$ and $\rho'$ are power-law equivalent then the corresponding quasi-metrics $\beta$ and $\beta'$ are  equivalent. 

One may ask if there is a {\it metric} $d$ and $D > 0$ such that $(m,d,D)$ is an Ahlfors regular {\it metric } space and $d$ is power-law equivalent to $\beta$. Indeed given $\gamma > 0$  such that $(2\Crr{pm})^\gamma\leq  2$ we have that $d_\gamma$ is a metric, where 
\begin{equation} \label{metric_b}d_\gamma(x,y)=\inf \{  \sum_{i=1}^{m-1} \beta(x_i,x_{i+1})^\gamma, \text{ $x_i \in I$, $x_1=x$ and $x_m=y$ } \}.\end{equation} 
By Aimar,  Iaffei and  Nitti \cite{qsm2} and Paluszy\'nski  and Stempak \cite{qsm3} if $(2\Crr{pm})^\gamma\leq  2$ then $d_\gamma$ is a metric on $I$,  $d^{1/\gamma}_\gamma$ is a quasi-metric equivalent to $\beta$. In particular $\beta, d_{\gamma}$ and $d_{\gamma'}$ are power-law equivalent for every $\gamma,\gamma'$. Moreover
$$ \frac{1}{\Crr{alll}} r^{1/\gamma}   \leq m(B_{d_\gamma}(x,r))\leq \Crr{alll} r^{1/\gamma}   $$
for some $\Cll{alll}> 1$ and $r\leq r_2$.   So $(m,d_\gamma, 1/\gamma)$ is a  Ahlfors regular  metric  space. 

\subsection{Good grids in an Ahlfors regular quasi-metric space.}

Due Section \ref{alf} from now on we will consider a general setting of  Ahlfors regular  quasi-metric  space $(m,d,D)$ satisfying (\ref{alf2}) for $r\leq r_0$ and  some $\Crr{all}, r_0 > 0$.

\begin{proposition}[Good grids in Ahlfors regular quasi-metric spaces] \label{fgh} There is a  good  grid $\mathcal{P}=(\mathcal{P}^i)_i$ such that  for every quasi-metric $\alpha$ that  is power-law equivalent to $d$ the following holds: There is $\eta,\Cll{de111}, \Cll{de1111},\Cll{de00},\Cll{oo}\geq 0$ and $\Cll[c]{de000}\in (0,1)$ such that for every $Q\in \mathcal{P}^k$, with $k\geq 1$, there is $z_Q\in Q$ satisfying 
\begin{equation}\label{pp} B_{\alpha}(z_Q,\Crr{de111} \Crr{de000}^k)  \subset Q,\end{equation}
\begin{equation}\label{pp2} diam_\alpha \  Q \leq \Crr{de1111} \Crr{de000}^k  \end{equation}
and
\begin{equation}\label{ppp}  m\{ x\in Q\colon \  \alpha(x,I\setminus Q)\leq \Crr{de00} t  \Crr{de000}^k    \}\leq \Crr{oo} t^{\eta} m(Q).\end{equation} 
\end{proposition}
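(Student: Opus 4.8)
The plan is to build the grid once and for all from the quasi-metric $d$ using the dyadic decomposition of Christ \cite{christ} (or its quasi-metric refinement by Hyt\"onen and Kairema \cite{hk}), and then to transport the three estimates to an arbitrary power-law equivalent $\alpha$ by an elementary comparison of balls and of distances-to-complements. Note that $\mathcal{P}$ must be fixed independently of $\alpha$; only the constants $\eta,\Crr{de111},\Crr{de1111},\Crr{de00},\Crr{oo},\Crr{de000}$ are allowed to depend on $\alpha$ (through the comparison exponent and constant).

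First I would invoke Christ's theorem for the space of homogeneous type $(I,d,m)$. It produces, for some $\delta\in(0,1)$ and constants $a_0,C_1,C_2,\eta_0>0$, a nested family of Borel ``cubes'' $\{Q^k_j\}_{k,j}$ with: each generation partitions $I$ up to measure zero; each $Q^k_j$ contains a $d$-ball $B_d(z^k_j,a_0\delta^k)$ and has $\mathrm{diam}_d\,Q^k_j\le C_1\delta^k$; and the small-boundary estimate $m\{x\in Q^k_j:\ d(x,I\setminus Q^k_j)\le t\delta^k\}\le C_2\,t^{\eta_0}\,m(Q^k_j)$ holds for all $t>0$. I would then set $\mathcal{P}^0=\{I\}$ and let $\mathcal{P}^k$ be the $(Nk)$-th Christ generation for a fixed large $N$. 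The good-grid axioms follow routinely: axiom (i) is the partition property; axiom (ii) holds because $\mathrm{diam}_d\,Q^k_j\to 0$, so the cubes separate points and generate the Borel $\sigma$-algebra $\mathbb{A}$; for axiom (iii), the inner-ball and diameter bounds combined with Ahlfors regularity (\ref{alf2}) give $\Crr{all}^{-1}a_0^D\,\delta^{Nk D}\le m(Q^k_j)\le\Crr{all}\,C_1^D\,\delta^{Nk D}$, so for a child $Q\subset P$ of consecutive generations the ratio $|Q|/|P|$ is pinched between positive multiples of $\delta^{ND}$, and choosing $N$ large makes the upper constant strictly less than $1$, yielding $\hat\lambda,\lambda\in(0,1)$. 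The same measure comparison at the first refinement fixes the $\mathcal{P}^1$-cubes to have measure in $(0,1)$.

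Next I would fix a quasi-metric $\alpha$ power-law equivalent to $d$, so that $C^{-1}d^{\gamma}\le\alpha\le C\,d^{\gamma}$ for some $\gamma,C>0$, and transfer the estimates with $\Crr{de000}=\delta^{N\gamma}$. For (\ref{pp}): if $\alpha(x,z^k_j)\le\Crr{de111}\,\delta^{N\gamma k}$ then $\alpha\ge C^{-1}d^{\gamma}$ gives $d(x,z^k_j)\le(C\Crr{de111})^{1/\gamma}\delta^{Nk}$, which is $\le a_0\delta^{Nk}$ once $\Crr{de111}\le a_0^{\gamma}/C$, so the $\alpha$-ball lies inside the $d$-ball, hence inside $Q$. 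For (\ref{pp2}): $\alpha\le C d^{\gamma}$ turns the $d$-diameter bound into an $\alpha$-diameter bound of order $\delta^{N\gamma k}$, giving $\Crr{de1111}$. For (\ref{ppp}), the monotonicity of $t\mapsto t^{\gamma}$ and the fact that both distances are infima over the same set $I\setminus Q$ yield $C^{-1}\big(d(x,I\setminus Q)\big)^{\gamma}\le\alpha(x,I\setminus Q)$; hence the $\alpha$-collar $\{x\in Q:\alpha(x,I\setminus Q)\le\Crr{de00}\,t\,\delta^{N\gamma k}\}$ is contained in the $d$-collar $\{x\in Q:d(x,I\setminus Q)\le (C\Crr{de00})^{1/\gamma}t^{1/\gamma}\delta^{Nk}\}$. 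Applying Christ's small-boundary estimate (at generation $Nk$, with the parameter $(C\Crr{de00})^{1/\gamma}t^{1/\gamma}$) to the latter gives (\ref{ppp}) with exponent $\eta=\eta_0/\gamma$ and an adjusted $\Crr{oo}$.

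The main obstacle I expect is precisely this last transfer: one must check that passing to the power-law equivalent $\alpha$ preserves the polynomial-in-$t$ decay, changing the exponent only by the factor $1/\gamma$, and that the distance-to-complement behaves monotonically under $C^{-1}d^{\gamma}\le\alpha\le C d^{\gamma}$, which it does because the defining infimum runs over the fixed set $I\setminus Q$. The remaining points — nestedness, measure comparability, and the ball and diameter bounds — are straightforward consequences of Ahlfors regularity once Christ's cubes are available.
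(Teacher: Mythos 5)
Your proposal is correct and follows essentially the same route as the paper: take Christ's dyadic cubes for $(I,d,m)$, pass to a subsequence of generations so that Ahlfors regularity pins the child-to-parent measure ratios strictly inside $(0,1)$, and then transfer the inner-ball, diameter, and thin-boundary estimates to any power-law equivalent $\alpha$ by elementary comparison. In fact you spell out in detail the final transfer step (including the change of exponent $\eta=\eta_0/\gamma$ and the monotonicity of the distance-to-complement under $C^{-1}d^{\gamma}\le\alpha\le C\,d^{\gamma}$), which the paper dispatches with ``it is easy to see.''
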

\begin{proof}
By Christ \cite{christ} there is a  family of partitions $\mathcal{G}^k$, $k \in \mathbb{Z}$, satisfying 
\begin{itemize}
\item[C1.] For every $k$ we have $m(I\setminus \cup_{Q\in \mathcal{G}^k} Q)=0$.
\item[C2.] For every $Q,W \in \mathcal{G}^k$  we have either $Q=W$ or $Q\cap W=\emptyset$.
\item[C3.] For every $Q \in \mathcal{G}^k$ there is an unique $P\in \mathcal{P}^{k-1}$ such that $Q\subset P$.
\item[C4.] For every $Q \in \mathcal{G}^k$ we have $diam_{d} Q \leq \Cll{de} \Cll[c]{de0}^k$.
\item[C5.] For every $Q \in \mathcal{G}^k$ there is $z_Q\in Q$ such that $B_{d}(z_Q,\Cll{de1} \Crr{de0}^k)\subset Q$, with $\Crr{de1}\leq \Crr{de}$.
\item[C6.] For every $Q \in \mathcal{G}^k$ we have 
$$m\{ x\in Q\colon \  d(x,I\setminus Q)\leq t  \Crr{de0}^k    \}\leq \Crr{oo} t^{\eta} m(Q).$$
\end{itemize}
Let $k_0$ be such that 
$$( \Crr{de1}+ \Crr{de} )\Crr{de0}^{k_0} < r_1$$
and
$$\Crr{all}  \Crr{de} \Crr{de0}^{k_0} < 1.$$

Let $K \in \mathbb{N}$ be such that
$$\Crr{maior}= \max \{ \Crr{all} ^{2} \frac{\Crr{de}^D}{\Crr{de1}^D} \Crr{de0}^{DK}, \Crr{all}  \Crr{de}^D\Crr{de0}^{Dk_0}\} < 1.$$
Note that 
$$\Crr{menor}= \min \{ \frac{1}{\Crr{all}^{2}} \frac{\Crr{de1}^D}{\Crr{de}^D} \Crr{de0}^{DK},  \frac{1}{\Crr{all} } \Crr{de1}^D \Crr{de0}^{Dk_0}\} \leq \Crr{maior}.$$
Let $\mathcal{P}^0=\{I\}$ and $\mathcal{P}^i=\mathcal{G}^{k_0+ (i-1)K}$ for every $i\geq 1$. 
Then for every $i\geq 0$, $P\in \mathcal{P}^{i+1}$ and $P\subset Q\in  \mathcal{P}^{i}$ we have
$$\Crr{menor}  \leq  \frac{|P|}{|Q|}\leq \Crr{maior}.$$
It is easy to see that $\mathcal{P}$ has similar properties for every power-law equivalent metric $\alpha$.
\end{proof}

For every grid $\mathcal{P}$ as in Proposition \ref{fgh} we  can consider the Banach space $\mathcal{B}^s_{p,q}(\mathcal{P})$, using $(s,p)$-Souza's atoms, where $p \in [1,\infty)$, $q\in [1,\infty]$ and $ s \in (0, 1/p)$. 

\begin{proposition}\label{indicator}  Let $\mathcal{P}$  be a  grid as in Proposition \ref{fgh} taking $\alpha=d$. Assume that $Dsp< \hat{\eta}$. There is $\Cll{abs3w}$, $\Cll{abs5w}$ such that the following holds.  Let $Q\subset I$ be an open subset such that there is $z_Q\in Q$ satisfying 
\begin{equation}\label{ppe} B_{d}(z_Q,\Cll{de111a} diam_d \  Q )  \subset Q,\end{equation}
and 
\begin{equation}\label{ppe2} m\{ x\in Q\colon \  d(x,I\setminus Q)\leq \Cll{de00x} t  \ diam_d \  Q   \}\leq \Cll{oox} t^{\hat{\eta}} m(Q)\end{equation}
for some $\Crr{de111a}$, $\Crr{de00x}$, $\Crr{oox}$ and  $\hat{\eta} > 0$. Then 
\begin{itemize}
\item[A.] The set  $Q$ is a  $(1-sp, \Cll{abs3}, \Crr{de000}^{\hat{\eta}-Dsp})$-regular domain,  with $$\Crr{abs3}= \Crr{abs3w}  \frac{\Crr{oox}}{\Crr{de00x}^{\hat{\eta}} \Crr{de111a}^D},$$ that is,  there are families $\mathcal{F}^k_Q\subset \mathcal{P}^k$,  such that 
\begin{itemize}
\item[i.] If $P\in \mathcal{F}^k_Q$, $W\in \mathcal{F}^j_Q$ and $P\neq W$ then $P\cap W =\emptyset.$
\item[ii.] We have 
$$Q=\bigcup_{k\geq k_0(Q)} \bigcup_{P \in \mathcal{F}^k_Q} P.$$
\item[iii.]  We have
\begin{align*}  \sum_{P\in \mathcal{F}^k_Q} |P|^{1-sp} 
&\leq \Crr{abs3} \Crr{de000}^{(k-k_0(Q))(\hat{\eta}-Dsp)} |Q|^{1-sp}.
\end{align*} 
\end{itemize}
\item[B.]  We have $1_Q \in \mathcal{B}^s_{p,q}(\mathcal{P})$ for every $q \in [1,\infty]$ and $$|1_Q|_{\mathcal{B}^s_{p,q}(\mathcal{P})}\leq \Crr{abs5w} \big( \frac{\Crr{oox}}{\Crr{de00x}^{\hat{\eta}} \Crr{de111a}^D}\big)^{1/p}  |Q|^{1/p-s}.$$
\end{itemize}
\end{proposition}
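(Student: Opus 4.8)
The plan is to construct the families $\mathcal{F}^k_Q$ by a Whitney-type stopping-time selection adapted to the grid, and then to read the atomic representation of $1_Q$ off this decomposition. First I would set $\mathcal{F}^k_Q=\{P\in\mathcal{P}^k:\ P\subset Q \text{ and } \hat P\not\subset Q\}$, where $\hat P\in\mathcal{P}^{k-1}$ is the unique predecessor of $P$, and let $k_0(Q)$ be the first level at which $\mathcal{F}^k_Q\neq\emptyset$ (if $Q=I$ the statement is trivial, so assume $Q\subsetneq I$, whence $k_0(Q)\geq 1$). Property (i) is then immediate from the nestedness of the grid: if $P\in\mathcal{F}^k_Q$, $W\in\mathcal{F}^j_Q$ with $j\geq k$ and $P\cap W\neq\emptyset$, then $W\subset P\subset Q$, so the predecessor of $W$ already lies in $P\subset Q$, contradicting $W\in\mathcal{F}^j_Q$ unless $W=P$. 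Property (ii), up to a null set, holds because $Q$ is open and $diam_d$ of the cube of $\mathcal{P}^k$ containing a fixed point tends to $0$ by (\ref{pp2}); hence almost every $x\in Q$ is eventually caught in a cube contained in $Q$, and the coarsest such cube lies in some $\mathcal{F}^k_Q$, while the reverse inclusion is clear since each selected cube is a subset of $Q$.

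The heart of the matter is the summability estimate (iii), and this is where I expect the real work to be. The key geometric point is that every selected cube lies in a thin collar of the complement: if $P\in\mathcal{F}^k_Q$ then $\hat P$ meets $I\setminus Q$ and $P\subset\hat P$, so for any $x\in P$ one has $d(x,I\setminus Q)\leq diam_d\,\hat P\leq\Crr{de1111}\Crr{de000}^{k-1}$ by (\ref{pp2}). Thus $\bigcup_{P\in\mathcal{F}^k_Q}P$ sits inside the collar $S_k=\{x\in Q:\ d(x,I\setminus Q)\leq\Crr{de1111}\Crr{de000}^{k-1}\}$, and applying the boundary hypothesis (\ref{ppe2}) with $t=\Crr{de1111}\Crr{de000}^{k-1}/(\Crr{de00x}\,diam_d\,Q)$ gives $m(S_k)\leq\Crr{oox}t^{\hat\eta}m(Q)$, which already carries the factor $\Crr{oox}/\Crr{de00x}^{\hat\eta}$. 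Since the cubes of $\mathcal{F}^k_Q$ are pairwise disjoint, $\sum_{P\in\mathcal{F}^k_Q}|P|\leq m(S_k)$.

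To convert this into a bound on $\sum_P|P|^{1-sp}$ I would invoke Ahlfors regularity (\ref{alf2}) together with (\ref{pp})--(\ref{pp2}), which force $|P|\approx\Crr{de000}^{Dk}$ for every $P\in\mathcal{P}^k$; hence $|P|^{1-sp}\leq \text{const}\cdot\Crr{de000}^{-Dspk}|P|$ and
$$\sum_{P\in\mathcal{F}^k_Q}|P|^{1-sp}\ \leq\ \text{const}\cdot\Crr{de000}^{-Dspk}\,m(S_k)\ \leq\ \text{const}\cdot\frac{\Crr{oox}}{\Crr{de00x}^{\hat\eta}}\,\frac{\Crr{de000}^{k(\hat\eta-Dsp)}}{(diam_d\,Q)^{\hat\eta}}\,m(Q).$$
The last step is the normalisation. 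Using the interior-ball condition (\ref{ppe}) and Ahlfors regularity one gets $|Q|\approx(diam_d\,Q)^D$, the lower bound $|Q|\geq \Crr{all}^{-1}(\Crr{de111a}\,diam_d\,Q)^D$ being exactly what produces the factor $\Crr{de111a}^{-D}$, and the first-fit level obeys $\Crr{de000}^{k_0(Q)}\lesssim diam_d\,Q$. Feeding these into the displayed inequality rewrites its right-hand side as $\Crr{abs3}\,\Crr{de000}^{(k-k_0(Q))(\hat\eta-Dsp)}|Q|^{1-sp}$ with $\Crr{abs3}$ of the asserted form, the exponent $\hat\eta-Dsp$ being positive precisely because $Dsp<\hat\eta$. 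This gives (iii) and identifies $Q$ as a regular domain with ratio $\Crr{de000}^{\hat\eta-Dsp}$.

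For part B I would use the decomposition directly: $1_Q=\sum_{k\geq k_0(Q)}\sum_{P\in\mathcal{F}^k_Q}1_P$ a.e., and since $1_P=|P|^{1/p-s}a_P$ this is a $\mathcal{B}^s_{p,q}(\mathcal{P})$-representation with coefficients $s_P=|P|^{1/p-s}$; the level-partial-sums are dominated by $1_Q$, so the series converges in $L^p$. By the definition (\ref{norm}) of the norm and by (iii),
$$|1_Q|_{\mathcal{B}^s_{p,q}(\mathcal{P})}\ \leq\ \Big(\sum_{k\geq k_0(Q)}\big(\sum_{P\in\mathcal{F}^k_Q}|P|^{1-sp}\big)^{q/p}\Big)^{1/q}\ \leq\ \Crr{abs3}^{1/p}|Q|^{1/p-s}\Big(\sum_{k\geq k_0(Q)}\Crr{de000}^{(k-k_0(Q))(\hat\eta-Dsp)q/p}\Big)^{1/q}.$$
The geometric series converges since $\Crr{de000}^{\hat\eta-Dsp}<1$, and its $q$-th root $(1-\Crr{de000}^{(\hat\eta-Dsp)q/p})^{-1/q}$ is bounded uniformly for $q\in[1,\infty]$ (it tends to $1$ as $q\to\infty$, and $q=\infty$ is just the supremum over $k$); absorbing this bound and $\Crr{abs3w}^{1/p}$ into a single constant $\Crr{abs5w}$ yields B. The main obstacle is the estimate (iii): it requires simultaneously the geometric collar observation, the measure-to-$|P|^{1-sp}$ conversion through Ahlfors regularity, and a normalisation by $|Q|^{1-sp}$ whose constant must be uniform over all admissible $Q$; the interior-ball and boundary hypotheses (\ref{ppe})--(\ref{ppe2}) are exactly the two ingredients that make this uniformity possible.
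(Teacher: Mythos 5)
Your proposal is correct and follows essentially the same route as the paper's own proof: your parent-based stopping-time families $\mathcal{F}^k_Q$ coincide (for a nested grid) with the paper's maximal-cube families, you make the same key observation that a selected cube lies, by (\ref{pp2}), within distance $diam_d \, \hat{P}$ of $I\setminus Q$ because its parent $\hat{P}$ meets the complement, you apply hypothesis (\ref{ppe2}) to the same collar with the same choice of $t$, and part B is read off the resulting Souza-atom representation of $1_Q$ exactly as in the paper. The only difference is bookkeeping in (iii): the paper first bounds the cardinality $\#\mathcal{F}^k_Q$ and then multiplies by $\max_{P}\left(|P|/|Q|\right)^{1-sp}$, whereas you convert $\sum_{P}|P|$ into $\sum_{P}|P|^{1-sp}$ directly via the Ahlfors-regularity lower bound on $|P|$ at level $k$ — an interchangeable computation yielding the same geometric decay $\Crr{de000}^{(k-k_0(Q))(\hat{\eta}-Dsp)}$.
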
 
\begin{proof} Note that  
$$\frac{1}{\Crr{all}^{2/D}} \leq \frac{diam_d \ Q}{\Crr{de000}^{k_0(Q)}} \leq  \frac{\Crr{pm}\Crr{de1111} }{\Crr{de111a}\Crr{de000}}.$$

Let $\mathcal{F}^k_Q\subset \mathcal{P}^k$  be the family of all $P\in \mathcal{P}^k$ such that $P \subset Q$ and there is not  $P' \in \mathcal{P}$ satisfying  $P \subsetneqq P'\subset Q$. For every  $P\in \mathcal{F}^k_Q$ define 
$$s_{P,Q}= |P|^{1/p-s}.$$
otherwise set $s_{P,Q}=0$. Note that
\begin{equation}\label{decc} 1_Q= \sum_{P\in \mathcal{P}} s_{P,Q} b_{P}.\end{equation}
Here $b_Q$ is the canonical $(s,p)$-Souza's atom on $Q$. 
If $P\in \mathcal{P}^k$ then 
\begin{equation}\label{pme}  m(P)\geq \frac{1}{\Crr{all}}  \Crr{de111}^D \Crr{de000}^{Dk}\end{equation} 
Note also that if $x \in Q$ and $$d(x,I\setminus Q) > \Crr{de1111} \Crr{de000}^{k-1} $$ then there is $P \in  \mathcal{P}^{k-1}$ such that $x \in P \subset Q$. Consequently if $k \geq k_0(Q)$
\begin{align*} \sum_{P\in \mathcal{F}^k_Q} |P| &= m(\bigcup_{P\in \mathcal{F}^k_Q} P) \\
&\leq  m\{ x\in Q\colon \ d(x,I\setminus Q) \leq \Crr{de1111}  \Crr{de000}^{k-1}    \} \\
&\leq \Crr{oox} \Big( \frac{\Crr{de1111}}{\Crr{de00x}} \frac{\Crr{de000}^{k-1}}{ diam_d \ Q} \Big)^{{\hat{\eta}}} |Q| \\
&\leq \Crr{oox} \big(\frac{\Crr{de1111} \Crr{all}^{2/D}\Crr{de000}^{-1} }{\Crr{de00x}} \big)^{\hat{\eta}}    \Crr{de000}^{(k-k_0(Q)){\hat{\eta}}}|Q|\\
&\leq \Crr{oox}\Crr{all} \big(\frac{\Crr{de1111} \Crr{all}^{2/D}\Crr{de000}^{-1} }{\Crr{de00x}} \big)^{\hat{\eta}} \big(\frac{\Crr{pm}\Crr{de1111} }{\Crr{de111a}\Crr{de000}}\big)^D    \Crr{de000}^{(k-k_0(Q)){\hat{\eta}}}\Crr{de000}^{Dk_0(Q)}\\
&\leq \Cll{abss} \frac{\Crr{oox}}{\Crr{de00x}^{\hat{\eta}} \Crr{de111a}^D}    \Crr{de000}^{(k-k_0(Q)){\hat{\eta}}}\Crr{de000}^{Dk_0(Q)}.
\end{align*} 
for some $\Crr{abss} \geq 0$. It follows from (\ref{ppe}) and  (\ref{pme}) that 
\begin{equation}
\sharp \mathcal{F}^k_Q \leq \Crr{abss} \frac{\Crr{all} }{ \Crr{de111}^D }  \frac{\Crr{oox}}{\Crr{de00x}^{\hat{\eta}} \Crr{de111a}^D}  \Crr{de000}^{(k-k_0(Q))({\hat{\eta}}-D)} 
\end{equation} 
Consequently by (\ref{ppe})
\begin{align*}   \sum_{P\in \mathcal{F}^k_Q}\Big(\frac{|P|}{|Q|}\Big)^{1-sp}
&\leq \Crr{abs3w}  \frac{\Crr{oox}}{\Crr{de00x}^\eta  \Crr{de111a}^D}  \Crr{de000}^{(k-k_0(Q))(\hat{\eta}-D)} \Crr{de000}^{(k-k_0(Q))D(1-sp)} \\
&\leq \Crr{abs3w} \frac{\Crr{oox}}{\Crr{de00x}^\eta \Crr{de111a}^D}\Crr{de000}^{(k-k_0(Q))(\hat{\eta}-Dsp)}.
\end{align*} 
This proves $A$. Together with (\ref{decc}) we have that $A.$ implies $B$.
\end{proof}

\begin{proposition}\label{besovhom}  Let $\mathcal{P}_i$, $i=\star,\circ$, be two grids as in Proposition \ref{fgh} taking $\alpha=d$.  Suppose that
$$\min\{ \eta_\star, \eta_\circ \}>  Dsp.$$
Then $\mathcal{B}^s_{p,q}(\mathcal{P}_\star)=\mathcal{B}^s_{p,q}(\mathcal{P}_\circ)$ and the corresponding norms are equivalent. 
\end{proposition}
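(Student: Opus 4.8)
The plan is to establish the two continuous inclusions $\mathcal{B}^s_{p,q}(\mathcal{P}_\star)\hookrightarrow \mathcal{B}^s_{p,q}(\mathcal{P}_\circ)$ and $\mathcal{B}^s_{p,q}(\mathcal{P}_\circ)\hookrightarrow \mathcal{B}^s_{p,q}(\mathcal{P}_\star)$ separately. They are symmetric under exchanging $\star$ and $\circ$, so I will only treat the first, which uses the hypothesis $\eta_\star>Dsp$; the second uses $\eta_\circ>Dsp$, and this is exactly why the assumption is on $\min\{\eta_\star,\eta_\circ\}$. The strategy is to take a near-optimal $\mathcal{P}_\star$-representation of a given $f$, rewrite each $\star$-Souza atom as a series of $\circ$-Souza atoms by means of the regular-domain decomposition of Proposition \ref{indicator}, regroup by $\circ$-cubes, and control the resulting $\ell^q(\ell^p)$ coefficient norm by the $\star$ one.

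\emph{Set-up.} Each $Q\in\mathcal{P}^k_\star$ is a Christ cube of the $\star$-grid, hence by Proposition \ref{fgh} it satisfies the interior-ball and boundary-thinness conditions (\ref{ppe}) and (\ref{ppe2}) with exponent $\hat\eta=\eta_\star$. Since $Dsp<\eta_\star$, Proposition \ref{indicator} applies with the grid $\mathcal{P}_\circ$ and produces, for each such $Q$, disjoint families $\mathcal{F}^{j,\circ}_Q\subset\mathcal{P}^j_\circ$ tiling $Q$ and satisfying $\sum_{P\in\mathcal{F}^{j,\circ}_Q}|P|^{1-sp}\le C\,\mu^{\,j-k_0^\circ(Q)}|Q|^{1-sp}$ for the fixed $\mu=\delta_\circ^{\,\eta_\star-Dsp}\in(0,1)$, where $\delta_\circ,\delta_\star\in(0,1)$ denote the per-level diameter scaling factors of the two grids. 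By (\ref{decc}) this gives $a_Q=|Q|^{s-1/p}1_Q=\sum_j\sum_{P\in\mathcal{F}^{j,\circ}_Q}(|P|/|Q|)^{1/p-s}\,b_P$, with $b_P$ the $\circ$-Souza atom. Substituting into $f=\sum_k\sum_{Q\in\mathcal{P}^k_\star}s_Q a_Q$ and collecting the coefficient of each $b_P$ yields a candidate $\mathcal{P}_\circ$-representation $f=\sum_j\sum_{P\in\mathcal{P}^j_\circ}c^j_P\,b_P$ with $c^j_P=\sum_{Q:\,P\in\mathcal{F}^{j,\circ}_Q}s_Q(|P|/|Q|)^{1/p-s}$.

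\emph{The coefficient estimate.} The key point is that for fixed $k,j$ the families $\{\mathcal{F}^{j,\circ}_Q:Q\in\mathcal{P}^k_\star\}$ are pairwise disjoint, since a $\circ$-cube $P$ of positive measure is contained in at most one $\star$-cube of level $k$. Hence the vectors $v^j_Q=\big((|P|/|Q|)^{1/p-s}\,\mathbf{1}_{\{P\in\mathcal{F}^{j,\circ}_Q\}}\big)_P$ have disjoint supports in $\ell^p(\mathcal{P}^j_\circ)$, so the $\ell^p$ norm splits,
\[\Big\|\sum_{Q\in\mathcal{P}^k_\star}s_Q v^j_Q\Big\|_{\ell^p}^p=\sum_{Q\in\mathcal{P}^k_\star}|s_Q|^p\,\|v^j_Q\|_{\ell^p}^p,\qquad \|v^j_Q\|_{\ell^p}^p=|Q|^{-(1-sp)}\!\!\sum_{P\in\mathcal{F}^{j,\circ}_Q}\!\!|P|^{1-sp}\le C\mu^{\,j-k_0^\circ(Q)}.\]
Because $\mathrm{diam}_d\,Q\asymp\delta_\star^{\,k}$ for $Q\in\mathcal{P}^k_\star$ (from (\ref{pp2}) and the interior-ball bound), the opening estimate of the proof of Proposition \ref{indicator}, comparing $\mathrm{diam}_d\,Q$ with $\delta_\circ^{\,k_0^\circ(Q)}$, pins $k_0^\circ(Q)=\theta k+O(1)$ for a fixed $\theta>0$ absorbing the possibly different scaling bases of the two grids. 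Writing $A^k_\star=(\sum_{Q\in\mathcal{P}^k_\star}|s_Q|^p)^{1/p}$ and summing over $k$ with the triangle inequality in $\ell^p$,
\[\|c^j\|_{\ell^p}\le\sum_k\Big\|\sum_{Q\in\mathcal{P}^k_\star}s_Q v^j_Q\Big\|_{\ell^p}\le C\sum_{k}\mu^{(j-\theta k)/p}\,A^k_\star,\]
the terms with $j<k_0^\circ(Q)$ being absent. The right-hand side is a discrete convolution of $(A^k_\star)_k$ against a one-sided, geometrically decaying kernel, and a Schur test (rows and columns of the kernel are uniformly summable because $\mu<1$) gives $\big(\sum_j\|c^j\|_{\ell^p}^q\big)^{1/q}\le C\big(\sum_k(A^k_\star)^q\big)^{1/q}$. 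Taking the infimum over $\star$-representations yields $|f|_{\mathcal{B}^s_{p,q}(\mathcal{P}_\circ)}\le C\,|f|_{\mathcal{B}^s_{p,q}(\mathcal{P}_\star)}$.

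It remains to check that the constructed series converges absolutely in $L^p$, so that it is a legitimate $\mathcal{B}^s_{p,q}(\mathcal{P}_\circ)$-representation; this follows by combining the per-atom $L^p$-absolute convergence guaranteed by part B of Proposition \ref{indicator} with the finiteness of the coefficient norm just obtained. I expect the main obstacle to be precisely the bookkeeping of this change of grid: verifying the disjointness of the supports at fixed $(k,j)$, establishing the affine relation $k_0^\circ(Q)=\theta k+O(1)$ between the two level indexings, and converting the level-by-level geometric decay into the $\ell^q(\ell^p)$ bound through the Schur/Young convolution inequality. The reduction to a single inclusion by symmetry and the two-sided control of $\mathrm{diam}_d\,Q$ are routine once Proposition \ref{indicator} is available.
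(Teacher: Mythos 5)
Your proposal is correct and follows essentially the same route as the paper's proof: you verify that each $Q\in\mathcal{P}^k_\star$ satisfies the hypotheses (\ref{ppe}) and (\ref{ppe2}) of Proposition \ref{indicator} with $\hat\eta=\eta_\star$ (the paper's Claims II and III), decompose $a_Q$ into $\circ$-atoms with the geometric coefficient decay, and pin $k_0^\circ(Q)$ affinely to $k$ with $\theta$ the ratio of the logarithms of the two scaling factors (the paper's Claim I, with its explicit constants $k_i$ and $b$). The only difference is the final functional-analytic step: where you carry out the disjoint-support $\ell^p$ splitting and the Schur/Young convolution estimate by hand (together with the absolute $L^p$-convergence check), the paper simply invokes Proposition \ref{besov-trans}.C of \cite{smania-besov}, which packages exactly that transfer of a per-atom geometric coefficient bound into a continuous inclusion of the $\ell^q(\ell^p)$-normed spaces.
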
 
\begin{proof} For each  $\mathcal{P}_i$, $i=\star,\circ$ denote  by $\eta_i, \Crr{de111}^i, \Crr{de1111}^i,\Crr{de00}^i,\Crr{oo}^i\geq 0$ and $\Crr{de000}^i\in (0,1)$ the  corresponding constants in Proposition \ref{fgh}.  Fix $Q\in \mathcal{P}_\star$. Given $P\in \mathcal{P}_\circ$, let $b_{P,Q}=a_P$, where $P$ is  the canonical Souza's atom supported on $P$. Let $Q\in \mathcal{P}_\star^i$.\\

\noindent {\it Claim I.}  We have 
\begin{equation} \label{con} k_i \leq k_0^\circ(Q)\leq k_i+b,\end{equation}
where 
\begin{equation}\label{ki} k_i= i \frac{\ln \Crr{de000}^\star}{\ln \Crr{de000}^\circ} + \frac{\ln \Crr{all}^2 + D(\ln \Crr{de1111}^\star-\ln \Crr{de1111}^\circ)}{D\ln \Crr{de000}^\circ}.\end{equation} 
Indeed we have
\begin{equation}\label{qm}  m(Q)\leq \Crr{all}  (\Crr{de1111}^\star (\Crr{de000}^\star)^i)^D\end{equation} 
and if $P\in \mathcal{P}_\circ^k$ then 
\begin{equation}\label{pm}  m(P)\geq\Crr{all}^{-1}  (\Crr{de1111}^\circ (\Crr{de000}^\circ)^k)^D.\end{equation} 
In particular if  $k < k_i$ and  $P \in \mathcal{P}_\circ^k$ then   $P \not\subset Q$, so $P \not\in \mathcal{F}^k_Q$  and  $s_{P,Q}=0$.  In particular $k_0^{\circ}(Q) \geq k_i$.  On the other hand by (\ref{pp}) 

$$B_{d}(z_Q^\star,\Cll{de11c} \Crr{de111}^\star(\Crr{de000}^\circ)^{k_i})  \subset B_{d}(z_Q^\star,\Crr{de111}^\star (\Crr{de000}^\star)^i)  \subset Q,$$
where
$$\Crr{de11c} =  \exp( -\frac{\ln \Crr{all}^2 +D(\ln \Crr{de1111}^\star - \ln \Crr{de1111}^\circ)}{D }.  $$
Let $b\in \mathbb{N}$ be such that 
$$\Crr{de1111}^\circ (\Crr{de000}^\circ)^{b} \leq \Crr{de11c}.$$
Then by (\ref{pp2}) there is  $P$  satisfying  $z^\star_Q\in  P\in \mathcal{P}_\circ^{k_i+b}$ and $P \subset Q$, so (\ref{con}) holds.\\

\noindent {\it Claim II.}  There is $\Crr{de111a}$ such that 
\begin{equation}\label{ppew} B_{d}(z_Q,\Crr{de111a} diam_d \  Q )  \subset Q.\end{equation}
By this follows from (\ref{pp}) and (\ref{pp2}) taking $\Crr{de111a}= \Crr{de111}^\star/\Crr{de1111}^\star. $\\

\noindent{\it Claim III.}  There are $ \Crr{de00x} ,  \Crr{oox}$ such that 
\begin{equation}\label{ppe24} m\{ x\in Q\colon \  d(x,I\setminus Q)\leq \Crr{de00x} t  \ diam_d \  Q   \}\leq \Crr{oox} t^{\eta_\star} m(Q)\end{equation}
holds.  

Indeed, by Proposition \ref{fgh} we have
$$ m\{ x\in Q\colon \  d(x,I\setminus Q)\leq \Crr{de00}^\star t  (\Crr{de000}^\star)^i    \}\leq \Crr{oo}^\star t^{\eta_\star} m(Q)$$
By (\ref{pp2}) we have $diam_ d \ Q\leq \Crr{de1111}^\star ( \Crr{de000}^\star)^i$, so
\begin{align*}
&m\{ x\in Q\colon \  d(x,I\setminus Q)\leq \frac{\Crr{de00}^\star}{\Crr{de1111}^\star} t  \ diam_d \  Q   \}\\
\leq &m\{ x\in Q\colon \  d(x,I\setminus Q)\leq \Crr{de00}^\star t  (\Crr{de000}^\star)^i    \}\leq \Crr{oo}^\star t^{\eta_\star} m(Q)
\end{align*}
Claim II and III imply that we can apply Proposition \ref{indicator} for $Q$ and consequently $Q$ is  a  $(1-sp, \Crr{abs3}, (\Crr{de000}^\circ)^{\eta_\star-Dsp})$-regular domain with respect the good grid $\mathcal{P}^\circ$, that is, one can find families  $\mathcal{F}^k_Q\subset \mathcal{P}_\circ^k$  as in Proposition \ref{indicator} taking $\mathcal{P}= \mathcal{P}_\circ$. For every  $P\in \mathcal{F}^k_Q$ define 
$$s_{P,Q}= \Big(\frac{|P|}{|Q|}\Big)^{1/p-s}.$$
otherwise set $s_{P,Q}=0$. Note that
$$a_Q= \sum_{P\in \mathcal{P}_\circ} s_{P,Q} a_P$$
and by Claim I
\begin{align*}  \sum_{P\in \mathcal{F}^k_Q} |s_{P,Q}|^p&= \sum_{P\in \mathcal{F}^k_Q}\Big(\frac{|Q|}{|P|}\Big)^{sp-1} \\
&\leq \Crr{abs3} (\Crr{de000}^\circ)^{(k-k_0^\circ(Q))(\eta_\star-Dsp)}\\
&\leq \Cll{abs33} (\Crr{de000}^\circ)^{(k-k_i)(\eta_\star-Dsp)}
\end{align*} 
with  $\Crr{abs33}=  \Crr{abs3}(\Crr{de000}^\circ)^{-b(\eta_\star-Dsp)}$.

By Proposition \ref{besov-trans}.C in \cite{smania-besov}  we have that $\mathcal{B}^s_{p,q}(\mathcal{P}_\star)\subset \mathcal{B}^s_{p,q}(\mathcal{P}_\circ)$ and there exists $C$ such that 
$$|f|_{\mathcal{B}^s_{p,q}(\mathcal{P}_\circ)}\leq C|f|_{\mathcal{B}^s_{p,q}(\mathcal{P}_\star)}$$
for every $f \in \mathcal{B}^s_{p,q}(\mathcal{P}_\star).$ Exchanging the roles of $\mathcal{P}_\star$ and $\mathcal{P}_\circ$ in the above argument we obtain the reverse inclusion and   inequality.  
\end{proof}

\begin{definition}\label{besovhomog}Let $(I,\rho,m)$ be a homogeneous space and $(m,d,1)$ be a Ahlfors regular  quasi-metric  space such that $d$ is power-law equivalent to $\rho$.   Let  $\tilde{\eta}$ be the supremum of all $\eta$ that admits a good grid  as in Proposition \ref{fgh} with $\alpha=d$. Let $s \in (0,1/p)$, $p\in [1,\infty)$ and $q \in [1,\infty]$. Assume $\tilde{\eta} > sp$. We define the  {\it Besov space} $\mathbb{B}^s_{p,q}$ on the homogeneous space $(I,\rho,m)$  as  $\mathcal{B}^s_{p,q}(\mathcal{P})$. The Besov space $\mathbb{B}^s_{p,q}$ is well-defined due Proposition \ref{besovhom}. 
\end{definition}



Han, Lu and Yang \cite{han2}  defined  inhomogeneous Besov spaces $B_{HLY}(s,p,q)$ for homogeneous spaces introducing  a type of Calder\'on reproducing formula.  They also obtained an atomic decomposition of these Besov spaces, that we describe now. Let $\beta$ as in (\ref{ms}) and $\Crr{cd}$ be such that
$$\beta(x,z)\leq \Cll{cd}( \beta(x,y)+\beta(y,z))$$
Note that $(I,m,\beta,1)$ is an Ahlfors regular quasi-metric space.  Let $\mathcal{P}^i$ be a grid as in Proposition \ref{fgh} taking $\alpha=\beta$. Fix $\gamma > 0$  small enough such that $d=d_\gamma$ as defined in (\ref{metric_b}) is a metric satisfying 
\begin{equation}\label{metric_bb}  \frac{1}{\Cr{nn}} d_\gamma (x,y) \leq     \beta^\gamma(x,y)\leq \Cl{nn}  d_\gamma (x,y)\end{equation} 
Let $s > 0$ be  such that $0< s< \gamma$.  A Han-Lu-Yang $\gamma$-block  $a_Q$ associated to $Q\in \mathcal{P}^k$ is a function
$$a_Q\colon I \rightarrow \mathbb{C}$$
such that
\begin{itemize}
\item[i.] $supp \ a_Q \subset B_\beta(z_Q,3\Crr{cd}\Crr{de1111} \Crr{de000}^k ).$
\item[ii.] $|a_Q|_\infty  \leq |Q|^{s-1/p}.$
\item[iii.] $|a_Q(x)-a_Q(y)| \leq |Q|^{s-1/p-\gamma} \beta(x,y)^\gamma.$
\end{itemize}

A function $f \in L^p$ belongs to $B_{HLY}(s,p,q)$ if (Theorem 2.1 and Proposition 3.1 in \cite{han2}, and also Theorem 6.5 and Remark 6.20 in Han and  Sawyer \cite{hs}) we can write
\begin{equation}\label{rp22}  f = \sum_k \sum_{Q\in \mathcal{P}^k} c_Q a_Q,\end{equation} 
where $a_Q$ is a Han-Lu-Yang $\gamma$-block  $a_Q$ associated to $Q$ and the convergence is absolute  in $L^p$, that is 
$$\sum_k |\sum_{Q\in \mathcal{P}^k} c_Q a_Q|_p < \infty$$
and
$$\Big(   \sum_k \big(\sum_{Q\in \mathcal{P}^k}  |c_Q|^p \big)^{q/p} \Big)^{1/q} < \infty.$$
We define 
$$|f|_{B_{HLY}(s,p,q)} =\inf \Big(   \sum_k \big(\sum_{Q\in \mathcal{P}^k}  |c_Q|^p \big)^{q/p} \Big)^{1/q},$$
where the infimum runs over all possible representations (\ref{rp22}).   We are using a slightly different definition  for an $\gamma$-block but  we also modified  the norm definition accordingly to obtain the same Besov space as in Han, Lu and Yang \cite{han2}.

\begin{proposition} \label{iii}
Let $\mathcal{P}$\label{besovreal} be a good grid as  in Proposition \ref{fgh} with $\alpha=\beta$.  Suppose that $\eta >  sp$, $0< s < \gamma$, $p\in [1,\infty)$ and $q\in [0,\infty]$. Then $B_{HLY}(s,p,q)=\mathbb{B}^s_{p,q}(\mathcal{P})$.
\end{proposition}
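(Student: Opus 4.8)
The plan is to prove the two continuous inclusions $\mathbb{B}^s_{p,q}(\mathcal{P})\hookrightarrow B_{HLY}(s,p,q)$ and $B_{HLY}(s,p,q)\hookrightarrow \mathbb{B}^s_{p,q}(\mathcal{P})$ separately, exploiting that both spaces are atomic decompositions indexed by the \emph{same} grid $\mathcal{P}$ (taken with $\alpha=\beta$, so $D=1$): the only difference is the shape of the atoms, piecewise constant Souza atoms $b_P$ versus H\"older $\gamma$-blocks $a_Q$. Thus the statement is an ``atoms versus molecules'' equivalence, and each inclusion reduces to (i) decomposing one kind of atom attached to a cell $Q\in\mathcal P^k$ into the other kind, with the $\ell^p$-mass decaying geometrically away from scale $k$, and then (ii) superposing these decompositions by a summation lemma of the type of Proposition \ref{besov-trans}.C in \cite{smania-besov}. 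Throughout I write $\theta\in(0,1)$ for the common ratio furnished by Proposition \ref{fgh}, so that every $P\in\mathcal P^j$ has $\beta$-diameter and measure comparable to $\theta^j$, and I use $\beta^\gamma\sim d_\gamma$ from (\ref{metric_bb}) to read condition iii of a $\gamma$-block as a $d_\gamma$-Lipschitz bound.

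For $B_{HLY}\hookrightarrow\mathbb B$ I would use conditional expectations. Let $E_j$ be the averaging operator over the cells of $\mathcal P^j$ and $D_j=E_j-E_{j-1}$. Since $\cup_j\mathcal P^j$ generates $\mathbb A$, a block $a_Q$ with $Q\in\mathcal P^k$ satisfies $a_Q=E_{k_1}a_Q+\sum_{j>k_1}D_ja_Q$ with $L^p$-convergence, where $k_1\sim k$ is the coarsest scale resolving its support ball. Here $D_ja_Q=\sum_{P\in\mathcal P^j}d_P\,1_P$ with $|d_P|\le \operatorname{osc}_{\widehat P}a_Q\lesssim |Q|^{s-1/p-\gamma}\theta^{\gamma j}$, $\widehat P\in\mathcal P^{j-1}$ being the parent. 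Writing $1_P=|P|^{1/p-s}b_P$, the Souza coefficient is $s_P=d_P|P|^{1/p-s}$; counting that at most $\lesssim\theta^{k-j}$ cells of $\mathcal P^j$ meet the support ball (Ahlfors regularity, $D=1$) gives $\sum_{P\in\mathcal P^j}|s_P|^p\lesssim \theta^{(j-k)p(\gamma-s)}$, which decays geometrically precisely because $s<\gamma$. Since for fixed $k$ the decompositions of distinct $a_Q$, $Q\in\mathcal P^k$, are spatially finitely overlapping at each scale $j$, superposing $f=\sum_k\sum_{Q\in\mathcal P^k}c_Qa_Q$ and applying the scale-convolution (Young) estimate of the summation lemma yields $|f|_{\mathbb B}\lesssim|f|_{B_{HLY}}$.

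For the reverse inclusion $\mathbb B\hookrightarrow B_{HLY}$ the difficulty is that a Souza atom is discontinuous, so it must be smoothed. I would build $d_\gamma$-Lipschitz bumps $\psi^{(j)}_Q$, $j\ge k$, with $0\le\psi^{(j)}_Q\le1$, equal to $1$ on $\{x\in Q:\beta(x,I\setminus Q)\ge\theta^j\}$, supported in the $\beta$-$\theta^j$-neighbourhood of $Q$, and with $\operatorname{Lip}_{d_\gamma}\psi^{(j)}_Q\lesssim\theta^{-\gamma j}$; then telescope $1_Q=\psi^{(k)}_Q+\sum_{j>k}(\psi^{(j)}_Q-\psi^{(j-1)}_Q)$. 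Each difference is supported in a boundary collar of $\beta$-width $\sim\theta^j$, whose measure is $\lesssim\theta^{(j-k)\eta}m(Q)$ by the regular-domain estimate (\ref{ppp}) (equivalently Proposition \ref{indicator}), so at most $\lesssim\theta^{(j-k)(\eta-1)}$ cells of $\mathcal P^j$ meet it. Localising $|Q|^{s-1/p}(\psi^{(j)}_Q-\psi^{(j-1)}_Q)$ by a Lipschitz partition of unity subordinate to these cells produces genuine $\gamma$-blocks on $\mathcal P^j$ with coefficients $|c_P|\lesssim\theta^{(j-k)(1/p-s)}$, whence the HLY cost at scale $j$ is $\sum_{P\in\mathcal P^j}|c_P|^p\lesssim\theta^{(j-k)(\eta-sp)}$. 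This is summable exactly when $\eta>sp$, the standing hypothesis; superposing over $Q$ by the summation lemma gives $|f|_{B_{HLY}}\lesssim|f|_{\mathbb B}$.

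The main obstacle I expect is the construction in the third paragraph: producing, from the single collar function $|Q|^{s-1/p}(\psi^{(j)}_Q-\psi^{(j-1)}_Q)$, a finite family of functions that are \emph{honestly} $\gamma$-blocks associated to cells $P\in\mathcal P^j$ --- simultaneously matching the sup-norm normalisation $|P|^{s-1/p}$, the H\"older normalisation $|P|^{s-1/p-\gamma}$, and the prescribed $\beta$-ball support of condition i. This forces the partition of unity to be adapted to $d_\gamma$ at exactly the right scale and to respect the nesting of cells, and checking that the two normalisations are compatible is where the exponents must be tracked carefully. The secondary technical point, common to both directions, is the summation lemma itself: that an array whose scale-$k$ spatial $\ell^p$-mass is spread to finer scales through a geometrically decaying, finitely overlapping kernel is bounded on $\ell^q(\ell^p)$ --- a discrete Young/Schur estimate in the scale variable that should follow from, or parallel, Proposition \ref{besov-trans}.C in \cite{smania-besov}. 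It is worth noting that the threshold $\eta>sp$ enters here in exactly the same way as in Proposition \ref{indicator}, confirming it is the natural hypothesis.
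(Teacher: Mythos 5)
Your proposal is correct, and it follows the same two-inclusion skeleton as the paper, with the two hypotheses entering at the same points ($s<\gamma$ makes the block-into-Souza-atom decomposition summable, $\eta>sp$ makes the Souza-atom-into-block decomposition summable); but both halves are implemented by genuinely different constructions. For the inclusion $B_{HLY}(s,p,q)\subset\mathbb{B}^s_{p,q}$ you decompose each $\gamma$-block directly by martingale differences $D_j=E_j-E_{j-1}$, getting per-scale $\ell^p$-cost $\theta^{(j-k)p(\gamma-s)}$ (your exponent bookkeeping here is right); the paper instead restricts each block to the boundedly many cells of its own generation meeting its supporting ball (the $\Lambda_Q$, $\Theta_P$ bookkeeping of its Step IV), which turns the block into boundedly many H\"older atoms on cells, and then invokes Proposition \ref{besov-hold} of \cite{smania-besov}, i.e.\ the statement that H\"older atoms and Souza atoms generate the same space. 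That cited proposition encapsulates precisely the conditional-expectation computation you carry out by hand, so your route is more self-contained while the paper's is more modular. For the reverse inclusion, your decomposition of $1_Q$ telescopes collar bumps and localizes each annular difference by a partition of unity at scale $j$, whereas the paper's Step II makes a Whitney-type selection of maximal cells $P$ whose enlarged balls sit inside $Q$, attaches a bump $\psi_P$ to each, and normalizes by the sum $f_Q$, checking that $1/f_Q$ is $\gamma$-H\"older because $f_Q$ is bounded above and below on $Q$. Both routes use the collar estimate (\ref{ppp}) to count cells ($\lesssim\theta^{(j-k)(\eta-1)}$ per scale) and both land on the cost $\theta^{(j-k)(\eta-sp)}$ of the paper's estimate (\ref{cteo}).

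Two repairs are needed in your write-up, neither fatal. First, your collar bumps must be supported \emph{inside} $Q$, e.g.\ $\psi^{(j)}_Q=\min\bigl(1,\theta^{-\gamma j}\,d_\gamma(\cdot,I\setminus Q)\bigr)$, not merely in a $\beta$-$\theta^j$-neighbourhood of $Q$: the estimate (\ref{ppp}) controls only the interior collar $\{x\in Q:\beta(x,I\setminus Q)\le \theta^j\}$, and with your stated supports the differences $\psi^{(j)}_Q-\psi^{(j-1)}_Q$ live partly in an exterior collar whose measure (\ref{ppp}) does not bound. (This can be rescued, since the exterior collar of $Q$ is contained in the interior collars of the boundedly many same-generation cells adjacent to $Q$, but keeping supports inside $Q$ — as the paper's $\psi_P$ do — makes the extra argument unnecessary.) Second, the "Lipschitz partition of unity subordinate to the cells" that you flag as the main obstacle is not an independent device: constructing it means dividing bumps by their sum, which is exactly the paper's $\psi_P/f_Q$ trick, with the lower bound on the denominator coming from the fact that the bumps equal $1$ on the cells and overlap boundedly. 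So the two constructions are closer than your text suggests; yours performs the division scale by scale, the paper's does it once globally on $Q$, and in both cases the H\"older bound for the quotient is the computation that must be written out.
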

\begin{proof}  We will prove it in  several steps.\\ \\
\noindent {\it Step I.}  Consider the grid 
$$\mathcal{F}^k=\{ B_\beta(z_Q,3\Crr{cd}\Crr{de1111} \Crr{de000}^k ) \colon \ Q\in \mathcal{P}^k  \}.$$
Let  $\mathcal{A}^{HLY}_{s,p}(B_\beta(z_Q,3\Crr{cd}\Crr{de1111} \Crr{de000}^k ))$ be the class of atoms  that consists of functions $\phi$ that satisfy
\begin{itemize}
\item[i.] $supp \ \phi \subset B_\beta(z_Q,3\Crr{cd}\Crr{de1111} \Crr{de000}^k ).$
\item[ii.] $|\phi|_\infty  \leq |B_\beta(z_Q,3\Crr{cd}\Crr{de1111} \Crr{de000}^k )|^{s-1/p}.$
\item[iii.] $|\phi(x)-\phi(y)| \leq |Q|^{s-1/p-\gamma} \beta(x,y)^\gamma.$
\end{itemize}
Note that  there is $\Cll{www} > 1$ such that for every $Q\in \mathcal{P}$
$$ \frac{1}{\Crr{www}} \leq     \frac{|B_\beta(z_Q,3\Crr{cd}\Crr{de1111} \Crr{de000}^k )|}{|Q|}\leq \Crr{www}$$
This implies that  $B_{HLY}(s,p,q)=\mathcal{B}^s_{p,q}(\mathcal{A}^{HLY}_{s,p})$ and its norms are equivalent.
 We will prove that $\mathcal{B}^s_{p,q}(\mathcal{A}^{HLY}_{s,p})=\mathcal{B}^s_{p,q}$.
\ \\ \\
\noindent {\it Step II.}  Let $e_Q$ be  the canonical $(s,p)$-Souza's  atom on $Q$. We claim that $e_{Q}\in \mathcal{B}^s_{p,q}(\mathcal{A}^{HLY}_{s,p})$ for every $Q\in \mathcal{P}^{k_0}$, $k_0\geq 1$. Indeed for every $k\geq k_0$, let $\mathcal{F}^k(Q)$ be the family of all
 $P\in \mathcal{P}^k$   such that 
\begin{itemize}
\item[A.]  We have 
$$P\subset B_{\beta}(z_P,  3 \Crr{cd}\Crr{de1111} \Crr{de000}^k )\subset Q.$$
\item[B.] If  $P\subset W\in \mathcal{P}^{i}$, with $i < k$ we have 
$$B_{\beta}(z_W, 3\Crr{cd} \Crr{de1111} \Crr{de000}^{i})\not\subset Q.$$
\end{itemize}
There is $\Cll{kk}> 0$ such that for every  $x \in B_{\beta}(z_P,  2 \Crr{de1111} \Crr{de000}^k )$, with $P\in \mathcal{F}^k(Q)$,   we have 
 $$  \frac{1}{\Crr{kk}} \Crr{de000}^k    \leq d_\beta (x,I\setminus  Q) \leq  \Crr{kk} \Crr{de000}^k.$$
In particular by (\ref{ppp}) there is $\Crr{some},\Crr{some2}$ such that 
$$m(\bigcup_{P\in \mathcal{F}^k(Q)} P) \leq \Cll{some} \Crr{de000}^{\eta( k-k_0)} |Q|\leq \Cll{some2} \Crr{de000}^{k_0+\eta( k-k_0)}$$
for every $k$, and consequently
\begin{equation}\label{cardfk}\# \mathcal{F}^k(Q) \leq \Cll{some3} \Crr{de000}^{(\eta-1)( k-k_0)} \end{equation}
for some constant $\Crr{some3}$.
 Note that 
$$Q= \bigcup_{k\geq k_0}  \bigcup_{P\in \mathcal{F}^k(Q)} P = \bigcup_{k\geq k_0}  \bigcup_{P\in \mathcal{F}^k(Q)} B_{\beta}(z_P,  2 \Crr{de1111} \Crr{de000}^k ).$$
For every $P\in \mathcal{F}^k(Q)$ such that $I\setminus B_{\beta}(z_P,  2 \Crr{de1111} \Crr{de000}^k )\neq \emptyset$  define
\begin{equation}\label{pss} \psi_P(x)=|P|^{-\gamma}d_\gamma(x,I\setminus B_{\beta}(z_P,  2 \Crr{de1111} \Crr{de000}^k )),\end{equation}
otherwise  let $\psi_P(x)=1$. There is  $\Cll{qq}> 1 $ such that for every $P\in \mathcal{F}^k(Q)$, $k\geq 1$, we have 
\begin{equation}\label{oi} \psi_P(x)\leq  \Crr{qq}\end{equation}
for every $x\in I$ and
\begin{equation}\label{oi2} \frac{1}{\Crr{qq}} \leq  \psi_P(x)\end{equation}
for every $x\in P$. We claim that 
$$|\psi_P(x)-\psi_P(y)|\leq 2 \Crr{qq}\Crr{nn} |P|^{-\gamma} \beta(x,y)^{\gamma}$$
for every $x,y\in I$. Indeed, if $d_\gamma(x,y)\geq |P|^\gamma$ then
$$|\psi_P(x)-\psi_P(y)|\leq 2 \Crr{qq} \leq 2 \Crr{qq} |P|^{-\gamma}d_\gamma(x,y)\leq 2 \Crr{qq} \Crr{nn} |P|^{-\gamma}\beta(x,y)^{\gamma},$$
and if $d_\gamma(x,y)\leq  |P|^\gamma$ then
$$|\psi_P(x)-\psi_P(y)|\leq |P|^{-\gamma}d_\gamma(x,y)   \leq  \Crr{nn} |P|^{-\gamma}\beta(x,y)^{\gamma}.$$
In particular
$$a_P(x)=\frac{1}{2 \Crr{qq} \Crr{nn} }  |P|^{s-1/p}\psi_P(x).$$
is a $\gamma$-block.  
Let 
$$f_Q(x)=\sum_{k\geq k_0}\sum_{P \in \mathcal{F}^k(Q)} 2 \Crr{qq} \Crr{nn}  |P|^{1/p-s} a_P$$
Note that $f_Q(x)=0$ if $x\not\in Q$ and $f_Q\in \mathcal{B}^s_{p,q}(\mathcal{A}^{HLY}_{s,p})$, since by (\ref{cardfk}) there is $\Cll{some4}, \Cll{uu}$ such that 
\begin{eqnarray}
\sum_{P\in \mathcal{F}^k(Q)}  |P|^{1-sp}  &\leq&   \sum_{P\in \mathcal{F}^k(Q)} (\Crr{de1111} \Crr{de000}^k )^{1-sp} \nonumber \\
&\leq& \Crr{de1111}^{1-sp}  \Crr{some3} \Crr{de000}^{(\eta-1)( k-k_0)} \Crr{de000}^{k (1-sp)}  \nonumber \\
 &\leq& \Crr{de1111}^{1-sp}  \Crr{some3} \Crr{de000}^{(\eta -sp)( k-k_0)} \Crr{de000}^{(1-sp)k_0} \nonumber \\
 \label{hgq}  &\leq& \Crr{uu}\Crr{de1111}^{1-sp}  \Crr{some3} \Crr{de000}^{(\eta -sp)( k-k_0)}|Q|^{1-sp} 
\end{eqnarray}
and consequently
\begin{eqnarray}
\Big(   \sum_{k\geq k_0} \big(\sum_{P\in \mathcal{F}^k(Q)}  |P|^{1-sp} \big)^{q/p} \Big)^{1/q} 
&\leq& \Crr{uu}^{1/p} \Crr{de1111}^{1/p-s} \Crr{some3}^{1/p} \Big(   \sum_{k\geq k_0}   \Crr{de000}^{\frac{q}{p}(\eta -sp)( k-k_0)} \Big)^{1/q}|Q|^{1/p-s}. \nonumber \\
\label{hq}&\leq&   \Crr{some4}  |Q|^{1/p-s}.
\end{eqnarray}
Finally note that due (\ref{dou}) and (\ref{pp}) there is $\Cll{se},\Cll{see}$ such that for every $P\in \mathcal{F}^k(Q)$, $k\geq 1$, we have
$$\# \cup_j \{R\in  \mathcal{F}^j(Q) \ s.t. \  B_{\beta}(z_R,  2 \Crr{de1111} \Crr{de000}^j )\cap B_{\beta}(z_P,  2 \Crr{de1111} \Crr{de000}^k ) \neq \emptyset   \}\leq \Crr{se}$$
and $|j-k|\leq \Crr{see}$ for every $j$ such that there is  $R\in  \mathcal{F}^j(Q)$ satisfying $$B_{\beta}(z_R,  2 \Crr{de1111} \Crr{de000}^j )\cap B_{\beta}(z_P,  2 \Crr{de1111} \Crr{de000}^k ) \neq \emptyset $$ for some $P\in \mathcal{F}^k(Q)$.  As a consequence $f_Q$ satisfies
$$|f_Q(x)-f_Q(y)|\leq \Cll{seee} |P|^{-\gamma}   \beta(x,y)^\gamma$$
for every $x,y \in B_{\beta}(z_P,  2 \Crr{de1111} \Crr{de000}^k )$, with $P\in \mathcal{F}^k$, $k\geq 1$. Moreover (\ref{oi}) and (\ref{oi2}) implies
$$\frac{1}{\Crr{qq}}  \leq f_Q(x) \leq \Crr{qq} \Crr{se}$$
for every $x \in Q$. This implies that 
$$|\frac{1}{f_Q(x)}-\frac{1}{f_Q(y)}|\leq \Cll{sn} |P|^{-\gamma}   \beta(x,y)^\gamma$$
for every $x,y \in B_{\beta}(z_P,  2 \Crr{de1111} \Crr{de000}^k )$, with $P\in \mathcal{F}^k$, $k\geq 1$. Then
$$b_{P,Q}(x)= \frac{\psi_P(x)}{(\Crr{qq}\Crr{sn} +2 \Crr{qq}^2\Crr{nn} )f_Q(x)}.$$
is a $\gamma$-block on $Q$ and
$$1_Q=  \sum_k \sum_{P\in \mathcal{F}^k(Q)} \Cll{yu} |P|^{1/p-s}  b_{P,Q}.$$
where $\Crr{yu}=\Crr{qq}\Crr{sn} +2 \Crr{qq}^2\Crr{nn}.$ So  (\ref{hq}) implies that $1_Q\in  \mathcal{B}^s_{p,q}(\mathcal{A}^{HLY}_{s,p})$. So if $e_Q$ is the canonical $(s,p)$-Souza's  atom on $Q$ then
$$e_Q=  \sum_k \sum_{P\in \mathcal{F}^k(Q)} r_{P,Q} b_{P,Q},$$
where 
$$r_{P,Q}= \Crr{yu} \Big(\frac{|P|}{|Q|}\Big)^{1/p-s}.$$
Due (\ref{hgq})
\begin{equation}\label{cteo} \sum_{P\in \mathcal{F}^k(Q)}  r_{P,Q}^{p}  \leq  \Cll{yui} \Crr{de000}^{(\eta -sp)( k-k_0)}.\end{equation}
In particular $e_Q\in \mathcal{B}^s_{p,q}(\mathcal{A}^{HLY}_{s,p})$.\\  \\
\noindent {\it Step III.} We apply Proposition \ref{besov-trans}.C  in \cite{smania-besov}   with $\mathcal{A}_2=\mathcal{A}^{HLY}_{s,p}$, $\mathcal{A}_1=\mathcal{A}^{sz}_{s,p}$, $\mathcal{G}=\mathcal{P}$ and $\mathcal{W}=\mathcal{F}$, and  $k_i=i$ for every $i$.   We have that assumption (\ref{besov-ad}) in \cite{smania-besov}  follows from (\ref{cteo}). Consequently $$\mathcal{B}^s_{p,q}(\mathcal{A}^{sz}_{s,p})\subset   \mathcal{B}^s_{p,q}(\mathcal{A}^{HLY}_{s,p})$$
and this inclusion is continuous. \\

\noindent {\it Step IV.}  To prove the reverse continuous inclusion, note that there is $\Cll{lim}$ such that for every $Q\in \mathcal{P}^k$, $k\geq 0$ we have
that $\#\Lambda_Q\leq \Crr{lim}$, $\#\Theta_Q\leq \Crr{lim}$, where
$$\Lambda_Q = \{ P\in \mathcal{P}^k\colon \ P\cap B_\beta(z_Q,3\Crr{cd}\Crr{de1111} \Crr{de000}^k ) \neq \emptyset\},$$
and
$$\Theta_Q = \{ P\in \mathcal{P}^k\colon \ Q\cap B_\beta(z_P,3\Crr{cd}\Crr{de1111} \Crr{de000}^k ) \neq \emptyset\}.$$
There is $\Cll{okp} > 0$ such that the following holds. If $\phi_Q\in \mathcal{A}^{HLY}_{s,p}(B_\beta(z_Q,3\Crr{cd}\Crr{de1111} \Crr{de000}^k ))$ then
for every $P\in \Lambda_Q $ we have
$$\Crr{okp} \phi_Q1_P \in \mathcal{A}^{h}_{s,1,p}(P)$$
where $\mathcal{A}^{h}_{s,1,p}$ are as defined in Section \ref{besov-hatoms} in \cite{smania-besov} taking  $d=\beta$, $D=1$ and $\beta=\gamma$. So if 
$$g=\sum_k \sum_{Q\in \mathcal{P}^k} c_Q \phi_Q$$
is a $\mathcal{B}^s_{p,q}(\mathcal{A}^{HLY}_{s,p})$-representation then define
$$m_P= \frac{1}{\Crr{okp}} \sum_{P\in \Lambda_Q} |c_Q|=\frac{1}{\Crr{okp}} \sum_{Q\in \Theta_P} |c_Q|.$$
We have that 
$$\psi_P= \frac{1}{m_P} \sum_{P\in \Lambda_Q} \Crr{okp} c_Q\phi_Q1_P \in \mathcal{A}^{h}_{s,1,p}(P)$$
and
$$g=\sum_k \sum_{P\in \mathcal{P}^k} m_P \psi_P$$
is a $\mathcal{B}^s_{p,q}(\mathcal{A}^{h}_{s,1,p})$-representation of $g$. Indeed
\begin{eqnarray}
\sum_{P\in \mathcal{P}^k} |m_P|^p&=& \frac{1}{\Crr{okp}^p} \sum_{P\in \mathcal{P}^k} (\sum_{Q\in \Theta_P} |c_Q| )^p \nonumber \\
&\leq&\frac{\Crr{lim}^p}{\Crr{okp}^p}    \sum_{P\in \mathcal{P}^k} \sum_{Q\in \Theta_P} |c_Q|^p \nonumber \\
&\leq&\frac{\Crr{lim}^p}{\Crr{okp}^p} \sum_{Q\in \mathcal{P}^k} \sum_{P\in \Lambda_Q} |c_Q|^p \nonumber \\
&\leq& \frac{\Crr{lim}^{p+1}}{\Crr{okp}^p}  \sum_{Q\in \mathcal{P}^k}  |c_Q|^p.\nonumber
\end{eqnarray} 
So $g \in \mathcal{B}^s_{p,q}(\mathcal{A}^{h}_{s,1,p})$ and
$$|g|_{\mathcal{B}^s_{p,q}(\mathcal{A}^{h}_{s,1,p})}\leq \frac{\Crr{lim}^{1+1/p}}{\Crr{okp}} |g|_{\mathcal{B}^s_{p,q}(\mathcal{A}^{HLY}_{s,p})}.$$
Since by Proposition \ref{besov-hold} in \cite{smania-besov} we have that $\mathcal{B}^s_{p,q}(\mathcal{A}^{h}_{s,1,p})=\mathcal{B}^s_{p,q}$ and their norms are equivalent. The proof is complete. \end{proof}

\subsection{The case $[0,1]^D$.} We would like to connect this abstract setting with classical Besov spaces on $\mathbb{R}^D$ considering the  very well-known homogeneous space  $([0,1]^D, m, d)$, where $d$ is the euclidean metric on $\mathbb{R}^D$ and $m$ is the Lebesgue measure on $\mathbb{R}^D$. Then
$(m,d^D,1)$ is a Ahlfors regular quasi-metric space. Note that we can choose $\gamma=1/D$.
Let  $\mathcal{P}^k$ be the partition of $[0,1]^D$ by  $D$-dimensional cubes with $1$-dimensional faces  with length $2^{-k}$ which  are parallels to the coordinate axes.  Then $(\mathcal{P}^k)_k$ is a good grid satisfying the conclusion of Proposition \ref{fgh} when $\alpha=d^D$  and taking $\Crr{de000}=1/2^D$ and $\eta =1/D$.  By Proposition \ref{besovhom} we have that $\mathbb{B}^s_{p,q}$ is well-defined provided $p \in [1,\infty)$, $q\in [1,\infty]$ and $0< s< 1/(Dp)$. 
Proposition \ref{iii}  implies that 
$B_{HLY}(s,p,q)=\mathbb{B}^s_{p,q}$ for every $p\in [1,\infty)$, $q\in [1,\infty]$ and $0< s< 1/(Dp)$.

Consider the  {\it classical} inhomogeneous Besov space $B_{class}(\hat{s},p,q)$ of $\mathbb{R}^D$, with $0< \hat{s}< 1/p$, $p\in [1,\infty)$ and $q\in [1,\infty]$. Let  $$T=\{ f1_{[0,1]^D}\colon \   f \in   B_{class}(sD,p,q) \}.$$  
We  believe that it is well know that $B_{HLY}(s,p,q)=T$, however since we did not  find a reference we provide a proof.  

\begin{proposition} We have $T=\mathbb{B}^s_{p,q}$, for every $s <1/(Dp)$, $p\in [1,\infty)$ and $q \in [1,\infty]$.\end{proposition}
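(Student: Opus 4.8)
The plan is to reduce everything to the identity $B_{HLY}(s,p,q)=\mathbb{B}^s_{p,q}$ that was just established (Proposition \ref{iii} together with the discussion of $[0,1]^D$), and then to prove the remaining identity $T=B_{HLY}(s,p,q)$ by matching the two atomic decompositions. The first observation is that for the Euclidean cube the measure-normalised quasi-metric of (\ref{ms}) satisfies $\beta(x,y)\approx |x-y|^D$, so with the choice $\gamma=1/D$ the metric $d_\gamma$ of (\ref{metric_b}) is equivalent to the Euclidean distance. Hence a Han--Lu--Yang $\gamma$-block $a_Q$ attached to a dyadic cube $Q\in\mathcal{P}^k$ is simply a Lipschitz function supported in a Euclidean ball of radius $\approx |Q|^{1/D}$, with $|a_Q|_\infty\leq |Q|^{s-1/p}$ and Lipschitz constant bounded by $|Q|^{s-1/p-1/D}$. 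Since $0<sD<1/p\leq 1$, these are exactly the building blocks of the classical atomic decomposition of $B_{class}(sD,p,q)$ in the low-regularity range, where no moment conditions on the atoms are required (see Triebel \cite{tbook}, and the piecewise-polynomial versions of de Souza \cite{souzao1}, DeVore and Popov \cite{spline2}, and Oswald \cite{oswald}).

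For the inclusion $T\subset B_{HLY}(s,p,q)$ I would start from a classical atomic representation $f=\sum_k\sum_Q s_Q\phi_Q$ of $f\in B_{class}(sD,p,q)$ on $\mathbb{R}^D$, with $\phi_Q$ a Lipschitz atom adapted to the dyadic cube $Q$ and with the coefficient bound $(\sum_k(\sum_Q|s_Q|^p)^{q/p})^{1/q}$ controlling $|f|_{B_{class}(sD,p,q)}$. Restricting to $[0,1]^D$, the cubes $Q$ meeting $[0,1]^D$ are in bounded-overlap correspondence with the cells of the good grid $\mathcal{P}^k$, and each restricted atom $\phi_Q 1_{[0,1]^D}$ is, up to a fixed multiplicative constant and after grouping the boundedly many cubes sharing a common support, a Han--Lu--Yang $\gamma$-block attached to a cell of $\mathcal{P}^k$; the normalisation and Lipschitz bound transfer verbatim because $\beta^{1/D}(x,y)\approx |x-y|$. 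The resulting coefficients still satisfy the required $\ell^{p,q}$-type bound, by the same bounded-overlap bookkeeping used in Step IV of the proof of Proposition \ref{iii}, so $f 1_{[0,1]^D}\in\mathcal{B}^s_{p,q}(\mathcal{A}^{HLY}_{s,p})=B_{HLY}(s,p,q)$, with norm control.

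For the reverse inclusion $B_{HLY}(s,p,q)\subset T$ I would go the other way. Given a Han--Lu--Yang representation $g=\sum_k\sum_Q c_Q a_Q$ on $[0,1]^D$, I extend each Lipschitz block $a_Q$ to a function $\widetilde{a}_Q$ on $\mathbb{R}^D$ having the same support ball, sup norm and Lipschitz constant: a McShane/Whitney extension of the Lipschitz function, followed by multiplication by a smooth cutoff supported in a fixed dilate of $Q$ and equal to $1$ on the support of $a_Q$. The cutoff lives on scale $\ell(Q)\approx |Q|^{1/D}$, so its gradient has size $\approx |Q|^{-1/D}$ and the product retains the Lipschitz constant $\approx |Q|^{s-1/p-1/D}$ together with the bound $|Q|^{s-1/p}$. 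Each $\widetilde{a}_Q$ is then a constant multiple of a classical $(sD,p)$-atom, and since $sD>0$ no moment condition is needed; the classical atomic decomposition theorem gives $\widetilde{f}=\sum_k\sum_Q c_Q\widetilde{a}_Q\in B_{class}(sD,p,q)$ with norm controlled by the coefficient sum. Because $\widetilde{a}_Q$ restricts to $a_Q$ on $[0,1]^D$, we get $\widetilde{f} 1_{[0,1]^D}=g$, whence $g\in T$; taking the infimum over representations yields the norm inequality.

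Putting the two inclusions together gives $T=B_{HLY}(s,p,q)$ with equivalent norms, and combining with Proposition \ref{iii} and the identification $B_{HLY}(s,p,q)=\mathbb{B}^s_{p,q}$ for $[0,1]^D$ completes the proof. The main obstacle is the analysis near the boundary of $[0,1]^D$: one must ensure that the extension of the $\gamma$-blocks preserves all the quantitative atom bounds and, in the opposite direction, that the restriction of the classical atoms, whose supports are dilates of dyadic cubes and therefore spill over several grid cells, can be re-indexed into genuine $\gamma$-blocks attached to single cells with only a bounded loss in the coefficient norm. Both points are the bounded-overlap and cutoff bookkeeping already encountered in Proposition \ref{iii}; the sole conceptual input is the classical fact that for smoothness $sD\in(0,1)$ the space $B_{class}(sD,p,q)$ admits an atomic decomposition by Lipschitz atoms with no moment conditions.
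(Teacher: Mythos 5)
Your proposal is correct in substance, and while the first inclusion follows the paper's route, the second one is genuinely different. For $T\subset B_{HLY}(s,p,q)=\mathbb{B}^s_{p,q}$ you and the paper do the same thing: invoke the classical (Frazier--Jawerth) atomic decomposition of $B_{class}(sD,p,q)$ by Lipschitz-regular atoms, observe that $\beta(x,y)^{\gamma}\approx|x-y|$ with $\gamma=1/D$, and restrict/regroup these atoms into Han--Lu--Yang $\gamma$-blocks with bounded bookkeeping. For the reverse inclusion the paper never touches a general HLY representation: it shows $\mathbb{B}^s_{p,q}\subset T$ by re-running Step II of Proposition \ref{iii} on all of $\mathbb{R}^D$, replacing the distance function in (\ref{pss}) by its square so that the resulting blocks decomposing each Souza atom are $C^1$ on $\mathbb{R}^D$, and then only ever uses the Frazier--Jawerth synthesis for $C^1$ atoms. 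You instead take an arbitrary HLY representation, extend each Lipschitz block from $[0,1]^D$ to $\mathbb{R}^D$ by McShane extension plus a cutoff at scale $\ell(Q)$, and then invoke atomic synthesis for \emph{merely Lipschitz} atoms without moment conditions. That input is true in this range (smoothness $1>sD>0$, $p\geq 1$, so no moments are required, and the synthesis can be checked by hand from the modulus-of-smoothness characterization of $B_{class}(sD,p,q)$ since $0<sD<1$), but it is strictly stronger than the $C^1$ statement of Frazier--Jawerth that the paper cites; your references for it are also slightly off target (de Souza and DeVore--Popov concern piecewise-constant/spline atoms, not Lipschitz-atom synthesis on $\mathbb{R}^D$ --- Triebel's non-smooth atoms, or a direct proof, is the right citation). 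In exchange, your argument avoids the paper's explicit smoothing construction and is arguably more transparent. Two small details to tighten: the McShane extension must be truncated to preserve the sup bound (and applied to real and imaginary parts separately, costing a fixed constant), and the support of a restricted classical atom, living in $MQ\cap[0,1]^D$, may force you to attach the resulting block to a grid cell a bounded number of levels coarser, with the corresponding bounded coefficient adjustment --- the same bookkeeping as in Step IV of Proposition \ref{iii}.
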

\begin{proof}
Fix $M > 1$. It follows from  Frazier and Jawerth \cite{fj}  that T is the 
set of all functions $f$ that can be written as in  (\ref{rep55}), where $s_Q$ satisfy  (\ref{rep2})
and $a_Q$ are $C^1$ functions in $\mathbb{R}^D$  such that $supp \ a_Q\subset M Q$,
$$|a_Q|\leq |Q|^{\hat{s}/D -1/p}$$
and
$$|\partial_i a_Q|\leq |Q|^{\hat{s}/D -1/p-1/D}$$
for every $i=1,\dots,D.$ In particular
$$|a_Q(x)-a_Q(y)|\leq \Cll{outra}|Q|^{\hat{s}/D -1/p-1/D}|x-y|$$
for some constant $\Crr{outra}$. Due the atomic decomposition of $B_{HLY}(s,p,q)$ by Han, Lu and Yang \cite{han2}  we conclude that $T\subset B_{HLY}(s,p,q)=\mathbb{B}^s_{p,q}$ (recall that $|x~-~y~|~=~\beta(x,y)^\gamma$). 

To show that 
$\mathbb{B}^s_{p,q}\subset T$ we  need to adapt the proof of Proposition \ref{iii},  firstly considering all functions and balls   defined  in the whole $\mathbb{R}^D$ (opposite to just on $[0,1]^D$). If we modify the definition of $\mathcal{A}^{HLY}_{s,p}(B_\beta(z_Q,3\Crr{cd}\Crr{de1111} \Crr{de000}^k ))$ in Step I requiring additionally that all functions in there must be $C^1$ on $\mathbb{R}^D$ then $T=\mathcal{B}^s_{p,q}(\mathcal{A}^{HLY}_{s,p})$. Moreover on Step II. we need to modify the definition of  $\psi_P$ to get a $C^1$ function on $\mathbb{R}^D$. Replace (\ref{pss}) by 

\begin{equation}\psi_P(x)=\frac{1}{2}|P|^{-2\gamma}d_\gamma^2(x,I\setminus B_{\beta}(z_P,  2 \Crr{de1111} \Crr{de000}^k )),\end{equation}
Since $d_\gamma$ is just the euclidean distance and $B_{\beta}(z_P, r)$, with $r> 0$, is an euclidean ball, we have that $\psi_P$ is $C^1$ on $\mathbb{R}^D$  and it satisfies all the estimates in the proof. Then we can carry out Step III to conclude that $\mathbb{B}^s_{p,q}\subset  T$.
\end{proof}

\section{Gu-Taibleson recalibrated martingale Besov spaces} \label{gu}

Let  $I$  be  a compact  Hausdorff  space  and let  $m$ be a borelian measure such that $m(I)=1$. If $\mathcal{P}$ is a good grid on $I$, we can consider the {\it recalibrated martingale Besov spaces} as defined by  Gu and Taibleson \cite{martingale}. In what follows, we just made the obvious adaptations  of their definition to the compact setting. Instead of dealing directly with the good grid $\mathcal{P}$  to define $\mathcal{B}^s_{p,q}(\mathcal{P})$, Gu and Taibleson  define  a {\it recalibration} of the martingale structure as follows. A recalibration of $\mathcal{P}$  is a new good grid $\mathcal{G}$ such that 
$$\cup_k \mathcal{G}^k\subset \cup_k \mathcal{F}^k$$
and satisfying  the following properties. If 
$$\ell_k= \max_{P\in \mathcal{G}^k}|P|, \ m_k =\min_{P\in \mathcal{G}^k}|P|$$
then
$$\ell_{k+1} < \ell_k$$
and  there exist $\Cll[c]{e1}, \Cll[c]{e2},\delta\in (0,1)$ such that 
$$0< \Crr{e1}\leq \frac{\ell_{k+1}}{\ell_{k}}\leq \Crr{e2}< 1$$
 and
$$1\leq \frac{\ell_k}{m_k}\leq \frac{1}{\delta}$$
for every $k$.  By  Gu and Taibleson  \cite{martingale} a recalibration always exists. 

Given $f\in L_1$, define 
$$f_k(x)=\frac{1}{|Q|}\int_Q f \ dm,$$
for every $x\in Q\in \mathcal{G}^k$ and
$$d_kf(x)=f_k(x)-f_{k-1}(x).$$
Of course
$$\int_Q d_k f \ dm=0$$
for every $Q\in \mathcal{G}^{k-1}$.
Note that $f_k$ is exactly the function $f_k$ defined in (\ref{besov-k0}) in \cite{smania-besov}, if we replace $\mathcal{P}^k$ by $\mathcal{G}^k$ there.
We say that that $f$ belongs to the {\bf Gu-Taibleson recalibrated martingale Besov space} $B_{GT}(s,p,q)$ if 
$$|f|_{B_{GT}(s,p,q)}=(\sum_k (\ell_k^{-s} |d_kf|_p)^q)^{1/q}\sim (\sum_k (\ell_k^{-s} |f- f_k|_p)^q)^{1/q} < \infty.$$
Gu and Taibleson also proved that 
$$|f|_{B_{GT}(s,p,q)}\sim (\sum_k (\ell_k^{-s} |f- f_k|_p)^q)^{1/q}.$$
\begin{proposition} We have $B_{GT}(s,p,q)=\mathcal{B}^s_{p,q}(\mathcal{G})$ and its norms are equivalent.
\end{proposition}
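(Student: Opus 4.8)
The plan is to route everything through the \emph{martingale differences} $d_kf$, exploiting that a recalibration makes all cells of a fixed generation comparable in size. Indeed, from $1\le \ell_k/m_k\le 1/\delta$ one gets $\delta\,\ell_k\le |Q|\le \ell_k$ for every $Q\in\mathcal{G}^k$, so $|Q|\sim \ell_k$ uniformly on each generation. This is the mechanism that lets the scale-dependent weight $\ell_k^{-s}$ appearing in $|\cdot|_{B_{GT}(s,p,q)}$ stand in for the normalisation $|Q|^{s-1/p}$ hard-wired into the Souza atoms $a_Q$; I would state and use it at the outset, together with Gu and Taibleson's equivalence $|d_kf|_p\sim |f-f_k|_p$, which makes the two formulations of the $B_{GT}$-norm interchangeable.

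\emph{The inclusion $B_{GT}(s,p,q)\subset \mathcal{B}^s_{p,q}(\mathcal{G})$} is the easy half, and I would do it by exhibiting one representation. Write $f=f_0+\sum_{k\ge1}d_kf$. Each $d_kf$ is constant on the cells of $\mathcal{G}^k$, so $d_kf=\sum_{Q\in\mathcal{G}^k}c_Q 1_Q$; substituting $1_Q=|Q|^{1/p-s}a_Q$ turns this into a $\mathcal{B}^s_{p,q}$-representation of $f$ with coefficients $s_Q=c_Q|Q|^{1/p-s}$ (the term $f_0$ being the single atom $a_I$, as $\mathcal{G}^0=\{I\}$). Then
$$\sum_{Q\in\mathcal{G}^k}|s_Q|^p=\sum_{Q\in\mathcal{G}^k}|c_Q|^p|Q|^{1-sp}\sim \ell_k^{-sp}\sum_{Q\in\mathcal{G}^k}|c_Q|^p|Q|=\ell_k^{-sp}\,|d_kf|_p^p,$$
whence $\big(\sum_{Q\in\mathcal{G}^k}|s_Q|^p\big)^{1/p}\sim \ell_k^{-s}|d_kf|_p$. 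The geometric decay $\ell_k^s\le \theta^{ks}$ guarantees that $\sum_k|d_kf|_p<\infty$, so the series converges absolutely in $L^p$; taking the $\ell^q$-norm in $k$ and the infimum over representations gives $|f|_{\mathcal{B}^s_{p,q}(\mathcal{G})}\lesssim |f|_{B_{GT}(s,p,q)}$.

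\emph{The inclusion $\mathcal{B}^s_{p,q}(\mathcal{G})\subset B_{GT}(s,p,q)$} is the substantial half. I would fix an arbitrary representation $f=\sum_j\sum_{Q\in\mathcal{G}^j}s_Q a_Q$ and estimate $|d_kf|_p$ generation by generation. The structural input is that $d_ka_Q=0$ whenever $Q\in\mathcal{G}^j$ with $j<k$ (such an atom is already $\mathcal{G}^{k-1}$-measurable, so $d_k$ annihilates it), while for $Q\in\mathcal{G}^j$, $j\ge k$, one has $d_ka_Q=|Q|^{s-1/p+1}\big(\tfrac{1_R}{|R|}-\tfrac{1_{R'}}{|R'|}\big)$, supported on the parent $R'\in\mathcal{G}^{k-1}$ of $Q$, where $R\in\mathcal{G}^k$ with $Q\subset R\subset R'$. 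Restricting to $R$, the generation-$j$ contribution to $d_kf$ equals $\tfrac1{|R|}\!\sum_{Q\subset R}s_Q|Q|^{s-1/p+1}-\tfrac1{|R'|}\!\sum_{Q\subset R'}s_Q|Q|^{s-1/p+1}$ (sums over $\mathcal{G}^j$); estimating with $|R|\sim|R'|\sim\ell_k$ and $|Q|\sim\ell_j$, a Hölder bound on the $\sim \ell_k/\ell_j$ cells of $\mathcal{G}^j$ inside $R'$, and summing over $R$ so that the measures combine, one obtains
$$\Big|\sum_{Q\in\mathcal{G}^j}s_Q\,d_ka_Q\Big|_p\lesssim \ell_j^{\,s}\Big(\sum_{Q\in\mathcal{G}^j}|s_Q|^p\Big)^{1/p}.$$
Summing over $j\ge k$ by Minkowski and using $\ell_j/\ell_k\le \theta^{\,j-k}$ for some $\theta\in(0,1)$ yields $\ell_k^{-s}|d_kf|_p\lesssim \sum_{j\ge k}\theta^{(j-k)s}A_j$ with $A_j=\big(\sum_{Q\in\mathcal{G}^j}|s_Q|^p\big)^{1/p}$. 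A discrete Young inequality in $k$ (convolution against a summable geometric kernel) then gives $\big(\sum_k(\ell_k^{-s}|d_kf|_p)^q\big)^{1/q}\lesssim \big(\sum_j A_j^q\big)^{1/q}$, and the infimum over representations produces $|f|_{B_{GT}(s,p,q)}\lesssim |f|_{\mathcal{B}^s_{p,q}(\mathcal{G})}$.

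I expect the second inclusion to be the only real obstacle. The difficulty is that an arbitrary atomic representation deposits mass on \emph{all} generations $j\ge k$, so one must show that the fine atoms ($j\gg k$) are harmless for $d_kf$; the decisive point is the off-diagonal gain $(\ell_j/\ell_k)^s\le\theta^{(j-k)s}$, which hinges solely on $s>0$. The recalibration estimate $|Q|\sim\ell_k$ is what lets one replace every cell measure by the single scale $\ell_k$ at each step, collapsing the per-generation estimate to the clean factor $\ell_j^{\,s}$ above; without it the measures within a generation would not be comparable and the convolution structure would break down.
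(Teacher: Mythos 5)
Your proof is correct, and it takes a genuinely different route from the paper's. The paper deduces both inequalities from machinery developed in \cite{smania-besov}: the inclusion $B_{GT}(s,p,q)\subset\mathcal{B}^s_{p,q}(\mathcal{G})$ follows from the oscillation characterization of the norm (Theorem \ref{besov-alte} in \cite{smania-besov}), bounding $osc_p(f,Q)^p$ by $\int_Q|f-f_{k-1}|^p\,dm$ and trading $|Q|^{-sp}$ for $\ell_k^{-sp}$ via $|Q|\geq\delta\ell_k$, while the reverse inclusion expands $d_kf$ in the Haar-type system $\phi_S$ of \cite{smania-besov} and invokes the bound of the resulting coefficient sums by $|f|_{\mathcal{B}^s_{p,q}}$. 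You instead argue directly from the atomic-decomposition definition: in one direction you convert the martingale expansion $f=f_0+\sum_k d_kf$ itself into a Souza-atom representation (each $d_kf$ is $\mathcal{G}^k$-measurable and piecewise constant, so $d_kf=\sum_{Q}c_Q|Q|^{1/p-s}a_Q$), and in the other you compute $d_ka_Q$ explicitly for an arbitrary representation, prove the per-generation estimate $\bigl|\sum_{Q\in\mathcal{G}^j}s_Q\,d_ka_Q\bigr|_p\lesssim\ell_j^{\,s}\bigl(\sum_{Q\in\mathcal{G}^j}|s_Q|^p\bigr)^{1/p}$, and close with a discrete Young inequality against the kernel $\theta^{(j-k)s}$. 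Your key estimate does hold: writing $d_ka_Q$ as a difference of averages over the ancestors $R\in\mathcal{G}^k$ and $R'\in\mathcal{G}^{k-1}$ and applying H\"older on each ancestor, the needed input is just $\sum_{Q\subset W}|Q|^{sp'+1}\leq\ell_j^{sp'}|W|$, so in fact $|Q|\leq\ell_j$ suffices there (the two-sided comparability $|Q|\sim\ell_j$ is only needed in the other inclusion); and the interchange of $d_k$ with the infinite sum is legitimate because conditional expectation is an $L^p$-contraction and the representation converges absolutely in $L^p$. As for what each approach buys: the paper's proof is a few lines long but rests on two nontrivial external results (the oscillation characterization and the Haar-coefficient estimates), whereas yours is self-contained, requiring only $L^p$ martingale convergence and the recalibration inequalities $\ell_{k+1}\leq\theta\,\ell_k$ (some $\theta\in(0,1)$) and $m_k\geq\delta\,\ell_k$, and it makes explicit exactly where each recalibration constant enters.
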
 
\begin{proof}
By Theorem \ref{besov-alte} in \cite{smania-besov} we have
\begin{eqnarray*}
|f|_{\mathcal{B}^s_{p,q}(\mathcal{G})}&\sim& \Big(  \sum_k \big(  \sum_{Q\in \mathcal{P}^k} |Q|^{-sp} osc_p(f,Q)^p  \big)^{q/p} \Big)^{1/q}\\
&\leq&  \Big(  \sum_k \big(  \sum_{Q\in \mathcal{P}^k} |Q|^{-sp}   \int_Q |f -f_{k-1} |^p \ dm  \big)^{q/p} \Big)^{1/q}\\
&\leq&  \Big(  \sum_k \big(  \sum_{Q\in \mathcal{P}^k}  \ell_k^{-sp} \int_Q |f-f_{k-1}|^p \ dm  \big)^{q/p} \Big)^{1/q}\\
&\leq&  \Big(  \sum_k \big(\ell_k^{-s}  |f-f_{k-1}|_p  \big)^{q} \Big)^{1/q}\sim |f|_{B_{GT}(s,p,q)}.\\
\end{eqnarray*}
On the other hand by (\ref{besov-estphi}) and Theorem \ref{besov-alte} in \cite{smania-besov}
\begin{eqnarray*}
(\sum_k (\ell_k^{-s} |d_kf|_p|)^q)^{1/q}&=&(\sum_k (\sum_{Q\in \mathcal{G}^k} \ell_k^{-sp} \int_Q |d_kf|^p\ dm)^{q/p})^{1/q}\\
&\leq&C (\sum_k (\sum_{Q\in \mathcal{G}^k}|Q|^{-sp} \int_Q |d_kf|^p)^{q/p})^{1/q}\\
&\leq&C (\sum_k (\sum_{Q\in \mathcal{G}^k}|Q|^{-sp} \int_Q |\sum_{S\in \mathcal{H}_Q} d_S\phi_S|^p \ dm)^{q/p})^{1/q}\\
&\leq&  \hat{C} (\sum_k (\sum_{Q\in \mathcal{G}^k}|Q|^{-sp} \int_Q \sum_{S\in \mathcal{H}_Q} |d_S|^p|\phi_S|^p \ dm)^{q/p})^{1/q}\\
&\leq&  \tilde{C} (\sum_k (\sum_{Q\in \mathcal{G}^k}|Q|^{1-sp-p/2}  \sum_{S\in \mathcal{H}_Q} |d_S|^p)^{q/p})^{1/q}\\
&\leq& C' |f|_{\mathcal{B}^s_{p,q}}.
\end{eqnarray*}
\end{proof}

Following Gu and Taibleson, we can endow $I$ with the metric
\begin{equation}\label{narch} \rho(x,y)=\inf \{ |Q|, \ Q\in \mathcal{P} \text{ and } x,y\in Q\}.\end{equation}
Then it is easy to see that $(I,\rho,m)$ is a  1-Ahlfors regular space, so in particular a homogeneous space. Indeed in this case $\rho=\beta=d_1$, where $\beta$ and $d_1$ are as in (\ref{ms}) and (\ref{metric_b}). The following result is, as far as we know,  new, and says that a  Gu-Taibleson recalibrated martingale Besov spaces are particular examples of  Besov spaces on homogenous spaces.

\begin{theorem} \label{mart} For $s > 0$ small and $p,q\geq 1$ the Gu-Taibleson recalibrated martingale Besov space $B_{GT}(s,p,q)$ does not depend on the chosen recalibration and it coincides with the  Besov space $\mathbb{B}^s_{p,q}=\mathcal{B}^s_{p,q}(\mathcal{G})=B_{HLY}(s,p,q)$ of the homogeneous space $(I,\rho,m)$.
\end{theorem}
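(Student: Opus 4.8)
The plan is to reduce the whole statement to a single identity and then read it off from results already in hand. By the preceding proposition, $B_{GT}(s,p,q)=\mathcal{B}^s_{p,q}(\mathcal{G})$ with equivalent norms for every recalibration $\mathcal{G}$, and by Proposition \ref{iii} applied to $(I,\rho,m)$ (for which $\beta=\rho$ and $D=1$) we have $B_{HLY}(s,p,q)=\mathbb{B}^s_{p,q}$ whenever $s$ is small. Hence it suffices to prove $\mathcal{B}^s_{p,q}(\mathcal{G})=\mathbb{B}^s_{p,q}$ with equivalent norms; since $\mathbb{B}^s_{p,q}$ is intrinsic to $(I,\rho,m)$ and well defined by Definition \ref{besovhomog} and Proposition \ref{besovhom}, the independence of $B_{GT}$ from the chosen recalibration is then immediate.

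The decisive point is that $\rho(x,y)=\inf\{|Q|\colon Q\in\mathcal{P},\ x,y\in Q\}$ is an ultrametric-type distance adapted to $\mathcal{P}$: since the $\mathcal{P}^k$ are nested, for each $z$ and $r>0$ the ball $B_\rho(z,r)$ equals the smallest $\mathcal{P}$-cell containing $z$ of measure $<r$. Thus $\rho$-balls are precisely $\mathcal{P}$-cells, and every $\mathcal{P}$-cell $Q$ — in particular every $\mathcal{G}$-cell, a recalibration being made of $\mathcal{P}$-cells — is clopen, with $diam_\rho Q=|Q|$ and $\rho(x,I\setminus Q)>|Q|$ for all $x\in Q$. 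Consequently the boundary hypothesis (\ref{ppe2}) holds vacuously with arbitrarily large $\hat\eta$ and the inner-ball hypothesis (\ref{ppe}) holds with constant $1$: these cells are as regular as one could want.

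Next I would choose, as the grid realising $\mathbb{B}^s_{p,q}$, one whose cells are themselves $\mathcal{P}$-cells. Fix $\theta<\hat\lambda$ and let $\mathcal{Q}^k$ be the partition of $I$ into maximal $\mathcal{P}$-cells of measure $<\theta^k$. A stopping-time argument shows the $\mathcal{Q}^k$ are nested, each of their cells has measure in $[\hat\lambda\theta^k,\theta^k)$, and $\mathcal{Q}$ is a good grid meeting the conclusion of Proposition \ref{fgh} for $\alpha=\rho$ with uniform scale $\theta$ and (by the empty-boundary property) arbitrary $\eta$; hence $\mathbb{B}^s_{p,q}=\mathcal{B}^s_{p,q}(\mathcal{Q})$ by Proposition \ref{besovhom}. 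It then remains to compare $\mathcal{G}$ and $\mathcal{Q}$, both of which are groupings of the same $\mathcal{P}$-cells by measure scale ($\sim\ell_k$ for $\mathcal{G}^k$, $\sim\theta^k$ for $\mathcal{Q}^k$). Since any two $\mathcal{P}$-cells are nested or disjoint, a maximal $\mathcal{G}$-cell inside some $R\in\mathcal{Q}^k$ has its $\mathcal{G}$-parent strictly containing $R$, which forces its measure to be $\gtrsim|R|$; so $R$ is a disjoint union of boundedly many $\mathcal{G}$-cells of comparable size, and symmetrically each $\mathcal{G}$-cell is a union of boundedly many $\mathcal{Q}$-cells. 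Writing the Souza atoms of one grid through those of the other and feeding the resulting bounded, scale-preserving coefficients into the transfer Proposition \ref{besov-trans}.C of \cite{smania-besov} (with the evident increasing identification of levels) yields both continuous inclusions, hence $\mathcal{B}^s_{p,q}(\mathcal{G})=\mathcal{B}^s_{p,q}(\mathcal{Q})=\mathbb{B}^s_{p,q}$.

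The step demanding the most care is the comparison of $\mathcal{G}$ with a genuine Proposition \ref{fgh} grid, since $\mathcal{G}$ is an arbitrary good grid rather than a round Christ grid: its cells at a fixed level have only comparable measures and its scale $\ell_k$ is merely trapped between the geometric sequences $\theta_1^k$ and $\theta_2^k$, so neither Proposition \ref{indicator} nor Proposition \ref{besovhom} applies to $\mathcal{G}$ verbatim. What makes the argument go through — and is its conceptual core — is that $\rho$ is built from $\mathcal{P}$ so that all cells are clopen: the boundary-layer estimates that ordinarily require true roundness collapse to nothing, and the mere geometric decay of $\ell_k$ is enough for the summability required by the transfer proposition. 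Arranging the auxiliary grid $\mathcal{Q}$ to be made of $\mathcal{P}$-cells as well reduces the whole comparison to the elementary bounded-overlap count above.
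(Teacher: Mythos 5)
Your proof is correct, and its skeleton coincides with the paper's: both reduce to the preceding proposition ($B_{GT}(s,p,q)=\mathcal{B}^s_{p,q}(\mathcal{G})$), both exploit the same key observation that every $\mathcal{P}$-cell is clopen for $\rho$ — so the Christ-type conditions (\ref{pp})--(\ref{ppp}) hold with empty boundary layers and arbitrarily large $\eta$ — and both funnel the conclusion through Proposition \ref{besov-trans}.C of \cite{smania-besov}, Proposition \ref{besovhom}/Definition \ref{besovhomog}, and Proposition \ref{iii}. The genuine difference is the auxiliary ``round'' grid. The paper takes it inside $\mathcal{G}$ itself: using the recalibration bounds on $\ell_{k+1}/\ell_k$ and $\ell_k/m_k$ it extracts levels $k_i$ with $|Q|\asymp\lambda^i$ for all $Q\in\mathcal{G}^{k_i}$, sets $\mathcal{W}^i=\mathcal{G}^{k_i}$, and obtains $\mathcal{B}^s_{p,q}(\mathcal{G})=\mathcal{B}^s_{p,q}(\mathcal{W})$ essentially for free from the transfer proposition (via the intermediate grid $\hat{\mathcal{W}}$ with empty levels); $\mathcal{W}$ then verifies Proposition \ref{fgh} for $\alpha=\rho$ and the theorem follows. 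You instead build $\mathcal{Q}$ by a stopping time on $\mathcal{P}$ (maximal cells of measure $<\theta^k$), which costs two verifications the paper avoids: that $\mathcal{Q}$ is a good grid (your choice $\theta<\hat{\lambda}$ is exactly what prevents a cell from lying in two consecutive levels, so the ratio bounds $[\hat{\lambda}\theta,\theta/\hat{\lambda}]$ come out right), and the two-sided bounded-overlap comparison between $\mathcal{G}$ and $\mathcal{Q}$ (also correct: maximality plus the nested-or-disjoint property of $\mathcal{P}$-cells forces each cell of one grid to be a disjoint union of boundedly many cells of the other, of comparable measure, sitting in a bounded band of levels, which is all the transfer proposition needs). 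What your route buys is a reference grid independent of the recalibration, making the independence assertion conceptually transparent; what the paper's subsampling buys is economy. One terminological slip: $B_\rho(z,r)$ is the \emph{maximal} $\mathcal{P}$-cell containing $z$ of measure $<r$, not the ``smallest'' (no smallest exists, as the measures decrease to zero along the chain of cells containing $z$); your construction of $\mathcal{Q}$ uses maximality correctly, so nothing downstream is affected.
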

\begin{proof} If $C > 0$ is large enough and $\lambda \in (0,1)$ is small enough, there exists an increasing sequence $k_i$ such that 
$$\frac{1}{C}\lambda^i\leq     |Q|\leq C\lambda^i$$
for every $Q\in \mathcal{G}^{k_i}$. Define the new  grid (which may not be a good grid) $\hat{\mathcal{W}}$ as $\hat{\mathcal{W}}^{k_i}=\mathcal{G}^{k_i}$ for every $i$ and  $\hat{\mathcal{W}}^{k}=\emptyset$ if $k\not\in \{k_i\}_i.$   One can easily prove using Proposition \ref{besov-trans}  in \cite{smania-besov} that $\mathcal{B}^s_{p,q}(\hat{\mathcal{W}})=\mathcal{B}^s_{p,q}(\mathcal{G})=B_{GT}(s,p,q)$ and theirs norms are equivalent.   Define the good grid $\mathcal{W}$ as $\mathcal{W}^i=\hat{\mathcal{W}}^{k_i}.$  We have $\mathcal{B}^s_{p,q}(\mathcal{W})=\mathcal{B}^s_{p,q}(\hat{\mathcal{W}})$. Note that  $\mathcal{W}$ is a good grid as in Proposition \ref{fgh} taking $\Crr{de000}=\lambda$. Indeed (\ref{pp}) and (\ref{pp2}) are easy to verify.  Note that if $x\in Q\in \mathcal{W}$ and $y\not\in Q$ we have
$$d(x,y)\geq |Q|\geq \frac{1}{C} \lambda^i.$$
and so  (\ref{ppp}) holds and consequently (see Definition \ref{besovhomog}) we have $\mathcal{B}^s_{p,q}(\mathcal{W})=\mathbb{B}^s_{p,q}=B_{HLY}(s,p,q)$.
\end{proof}
We are going to see in Remark  \ref{exotic2} that it is possible to choose the initial good grid $\mathcal{P}$ is such way that $\mathcal{B}^s_{p,q}(\mathcal{P})$ {\it is not } $\mathbb{B}^s_{p,q}(I,\rho,m)$. That means that the families of spaces defined by Souza's atoms is richer than the  families of Besov spaces on homogeneous spaces and recalibrated martingale Besov spaces.

\section{Examples of strongly regular domains }\label{srd}

\begin{figure}
\includegraphics[scale=0.4]{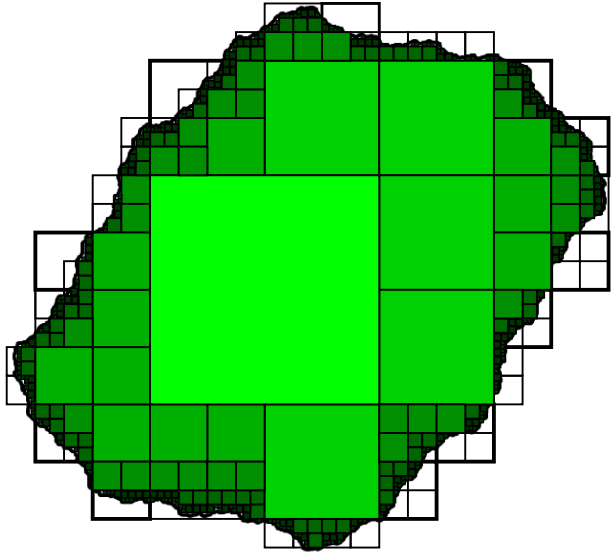}
\caption{A Julia  set $J$ of a quadratic polynomial $x^2+c$, with $c$ small, is a quasicircle. Every quasi-circle delimits a regular domain, but indeed $J$ is the boundary of a strongly regular domain. }
\end{figure}

Let's  assume that  $(I,d, m,D)$ is an Ahlfors quasi-metric space, such that (\ref{alf2}) holds for  every $x\in I$ and $r\leq r_0$. 
The following result give sufficient geometric conditions on $\partial\Omega$ for $\Omega$ to be a strongly regular domain, and in particular for $1_\Omega$ to define a multiplier in appropriated Besov spaces (see Section \ref{srd}). This is obviously  a generalisation of results in Triebel\cite{ns} to our setting.

\begin{proposition}\label{quasi} Let $\mathcal{P}$ be good grid for $(I,d, m,D)$ as in Proposition \ref{fgh}.  Let $K\subset I$ be a closed subset such that $(K,d,\mu,\alpha)$ is Ahlfors regular quasi-metric-space, with $\alpha < D$ and some finite borelian measure $\mu$, that is,
for every $x\in K$ and $r\leq r_1$
$$ \frac{1}{\Cll{ac2}} r^\alpha\leq      \mu(B_d(x,r))\leq \Crr{ac2} r^\alpha.$$

Then  $K^c$ is a $(\alpha/D, \Cll{rp2},\Cll{k1})$-strongly regular domain for some $ \Crr{rp2}\geq 0$ and $\Crr{k1}$ satisfying 
 $$4\Crr{de1111} \Crr{de000}^{\Crr{k1}-1} < \min \{r_0,r_1\},$$ 
 that is, for every $Q\in \mathcal{P}$ such that $k_0(Q)\geq \Crr{k1}$  one can  find  families $\mathcal{F}^k(Q\cap~K^c)~\subset~\mathcal{P}^k$, $k\geq  k_0(Q)$,  such that 
\begin{itemize}
\item[A.] We have $$Q\cap K^c = \cup_{k\geq k_0(Q)} \cup_{P\in \mathcal{F}^k(Q\cap K^c)} P$$
\item[B.] If $P,J \in \cup_{k\geq k_0(Q)} \mathcal{F}^k(Q\cap K^c)$ and $P\neq J$ then $P\cap J=\emptyset$. 
\item[C.] We have
\begin{equation}\label{dom}  \sum_{P\in \mathcal{F}^k(Q\cap K^c)} |P|^{\alpha/D}\leq \Crr{rp2} |Q|^{\alpha/D}.\end{equation} 
\end{itemize}
Suppose additionally that every element of $\mathcal{P}$ is a connected set.  Then every subset $\Omega\subset K^c$  such that $\partial \Omega\subset K$  is also a $(\alpha/D, \Crr{rp2})$-strongly regular domain.
\end{proposition}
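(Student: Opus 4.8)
The plan is to realise $Q\cap K^c$ by a Calder\'on--Zygmund stopping-time selection against $K$, and then to control the number of selected grid elements at each scale by playing the $D$-dimensional Ahlfors regularity of $m$ against the $\alpha$-dimensional Ahlfors regularity of $\mu$ on $K$. Fix $Q\in\mathcal{P}$ with $k_0(Q)\geq \Crr{k1}$; if $Q\cap K=\emptyset$ the claim is immediate (take $\mathcal{F}^{k_0(Q)}=\{Q\}$), so assume $Q\cap K\neq\emptyset$. For $k\geq k_0(Q)$ let $\mathcal{F}^k(Q\cap K^c)$ consist of all $P\in\mathcal{P}^k$ that are maximal for inclusion among the grid elements satisfying $P\subset Q$ and $P\cap K=\emptyset$. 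Property $B$ is immediate from maximality and the nested structure of the grid, since two intersecting grid elements are nested and the smaller one cannot be maximal. For $A$, each selected $P$ satisfies $P\subset Q\cap K^c$; conversely, for $m$-a.e.\ $x\in Q\cap K^c$ there is a unique nested sequence $Q=Q_{k_0(Q)}\supset Q_{k_0(Q)+1}\supset\cdots$ of grid elements containing $x$, and since $d(x,K)>0$ while $diam_d\,Q_k\leq\Crr{de1111}\Crr{de000}^k\to 0$, some $Q_k$ is disjoint from $K$; the largest such one lies in $\mathcal{F}^k(Q\cap K^c)$. Hence $Q\cap K^c=\bigcup_{k\geq k_0(Q)}\bigcup_{P\in\mathcal{F}^k(Q\cap K^c)}P$ up to a set of measure zero.

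The core is estimate $C$. For $k>k_0(Q)$ and $P\in\mathcal{F}^k(Q\cap K^c)$, its parent $\widehat P\in\mathcal{P}^{k-1}$ satisfies $\widehat P\subset Q$ (by nesting, since $P\subset\widehat P\cap Q$) and $\widehat P\cap K\neq\emptyset$ (otherwise $\widehat P$ would contradict the maximality of $P$). As each parent has at most $\hat{\lambda}^{-1}$ children, $\#\mathcal{F}^k(Q\cap K^c)\leq \hat{\lambda}^{-1}N_{k-1}$, where $N_{k-1}=\#\{\widehat P\in\mathcal{P}^{k-1}:\widehat P\subset Q,\ \widehat P\cap K\neq\emptyset\}$. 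To bound $N_{k-1}$, for each such $\widehat P$ pick $y_{\widehat P}\in\widehat P\cap K$ and consider the balls $B_d(y_{\widehat P},c\,\Crr{de000}^{k-1})$ with $c$ a small fixed constant. Using that each $\widehat P\supset B_d(z_{\widehat P},\Crr{de111}\Crr{de000}^{k-1})$ has $m$-measure at least $\Crr{all}^{-1}(\Crr{de111}\Crr{de000}^{k-1})^D$ and that the $\widehat P$ are disjoint, a volume comparison in $m$ shows that these balls have bounded overlap by a constant $M$ depending only on $\Crr{all},\Crr{de111},\Crr{de1111}$ and the quasi-metric constant $\Crr{pm}$. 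On the other hand each ball is centred at a point of $K$, so $\mu(B_d(y_{\widehat P},c\,\Crr{de000}^{k-1}))\geq\Crr{ac2}^{-1}(c\,\Crr{de000}^{k-1})^\alpha$, and all of them lie in $B_d(y_\ast,R)$ for a fixed $y_\ast\in Q\cap K$ and $R\asymp\Crr{de000}^{k_0(Q)}$. The threshold $4\Crr{de1111}\Crr{de000}^{\Crr{k1}-1}<\min\{r_0,r_1\}$ is exactly what keeps every radius below $\min\{r_0,r_1\}$, so both regularity estimates apply. Integrating the bounded-overlap inequality against $\mu$ yields
$$N_{k-1}\,\Crr{ac2}^{-1}(c\,\Crr{de000}^{k-1})^\alpha\leq M\,\mu(B_d(y_\ast,R))\leq M\,\Crr{ac2}\,R^\alpha,$$
so that $N_{k-1}\leq C\,\Crr{de000}^{-\alpha(k-k_0(Q))}$ for some constant $C$.

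Combining the two bounds gives $\#\mathcal{F}^k(Q\cap K^c)\leq C'\,\Crr{de000}^{-\alpha(k-k_0(Q))}$; since $|P|\leq\Crr{all}(\Crr{pm}\Crr{de1111}\Crr{de000}^k)^D$ forces $|P|^{\alpha/D}\leq C''\,\Crr{de000}^{\alpha k}$, we obtain $\sum_{P\in\mathcal{F}^k(Q\cap K^c)}|P|^{\alpha/D}\leq C'''\,\Crr{de000}^{\alpha k_0(Q)}$, the $k$-dependence cancelling exactly. As $|Q|^{\alpha/D}\geq C\,\Crr{de000}^{\alpha k_0(Q)}$ by $(\ref{pp})$, this is precisely $C$, and $\Crr{rp2}$ is read off from these constants. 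I expect the main technical points to be this exponent cancellation together with the bookkeeping of which radii must lie below $r_0$ (for the $m$-estimates) versus $r_1$ (for the $\mu$-estimates).

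For the last assertion, assume every element of $\mathcal{P}$ is connected and let $\Omega\subset K^c$ with $\partial\Omega\subset K$. Every $P$ occurring in the decomposition of $K^c$ is connected and disjoint from $K\supset\partial\Omega$, hence $P$ meets $\partial\Omega$ nowhere and, being connected, is contained either in $\Omega$ or in its complement. Therefore $\mathcal{F}^k(Q\cap\Omega):=\{P\in\mathcal{F}^k(Q\cap K^c):P\subset\Omega\}$ is a subfamily whose union equals $Q\cap\Omega$ up to measure zero; properties $A$ and $B$ pass to subfamilies and the sum in $C$ only decreases, so $\Omega$ is a $(\alpha/D,\Crr{rp2})$-strongly regular domain with the same constant.
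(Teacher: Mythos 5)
Your proposal is correct and follows essentially the same route as the paper: the same stopping-time (maximal) selection of grid elements disjoint from $K$, the same observation that parents of selected elements meet $K$, the same bounded-overlap argument via disjoint inner balls and $m$-volume comparison, and the same interplay between the lower $\alpha$-regularity of $\mu$ at scale $\Crr{de000}^{k-1}$ and its upper regularity at scale $\Crr{de000}^{k_0(Q)}$, with the threshold $\Crr{k1}$ keeping all radii below $\min\{r_0,r_1\}$. The only difference is bookkeeping — you extract a cardinality bound $N_{k-1}\lesssim\Crr{de000}^{-\alpha(k-k_0(Q))}$ and multiply by $\max|P|^{\alpha/D}$, while the paper sums $|P|^{\alpha/D}\lesssim\mu(B_d(y_P,\Crr{de1111}\Crr{de000}^{k-1}))$ directly and applies bounded overlap on the $\mu$-side — and the final connectedness argument for $\Omega$ is identical.
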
 
\begin{proof} Suppose $Q\cap K\neq \emptyset,$ otherwise there is nothing to do. 
Define
$$\mathcal{F}^{k_0(Q)}(Q\cap K^c)=\{P\in \mathcal{P}^{k_0}, \ P\subset Q\cap K^c    \}.$$
and for $k > k_0(Q)$
$$\mathcal{F}^{k}(Q\cap K^c)=\{P\in \mathcal{P}^{k}, \ P\subset (Q\cap K^c) \setminus \cup_{k_0\leq i< k} \cup_{P\in \mathcal{F}^{i}(Q\cap K^c)} P  \}.$$
Given $P\in \mathcal{F}^{k}(Q\cap K^c)$, let $J_P\in \mathcal{P}^{k-1}$ be such that $P\subset J_P$. Note that $J_P\cap K\neq \emptyset.$ Choose $y_P\in J_P\cap K.$ Then
\begin{equation}\label{u3} B_{d}(z_P,\Crr{de111} \Crr{de000}^k) \subset P\subset J_P\subset B_d(y_P,\Crr{de1111} \Crr{de000}^{k-1}).\end{equation}
We claim   that for every $Q$ 
\begin{equation}\label{u2}\sum_{P\in \mathcal{F}^{k}(Q\cap K^c)} 1_{B_d(y_P,\Crr{de1111} \Crr{de000}^{k-1})} \leq    \Crr{upper}.\end{equation}
Here
$$\Cll{upper}=  \frac{\Crr{all}^2 (2\Crr{de1111})^D }{\Crr{de111}^D\Crr{de000}^D}.$$
Indeed, suppose  there is $y$ and $P_1, \dots, P_{N}\in  \mathcal{F}^{k}(Q\cap K^c)$ such that $y\in B_d(y_{P_i},\Crr{de1111} \Crr{de000}^{k-1})$, for every $i\leq N$. In particular
$$\cup_i B_{d}(z_{P_i},\Crr{de111} \Crr{de000}^k) \subset \cup_i P_i\subset \cup_i J_{P_i}\subset B_d(y, 2\Crr{de1111} \Crr{de000}^{k-1}).$$
so
$$N \frac{1}{\Crr{all}}  (\Crr{de111} \Crr{de000}^k)^D \leq      \Crr{all} (2\Crr{de1111} \Crr{de000}^{k-1})^D$$
and consequently $N\leq \Crr{upper}.$ This proves the claim.  Choose $q\in Q\cap K$. Then  (\ref{u3})  and (\ref{u2}) implies 
\begin{eqnarray*}
\sum_{P\in \mathcal{F}^{k}(Q\cap K^c)} |P|^{\alpha/D}  &\leq&   \Crr{all} \sum_{P\in \mathcal{F}^{k}(Q\cap K^c)} |\Crr{de1111} \Crr{de000}^{k}|^{ \alpha}\\
&\leq&  \Crr{all}  \Crr{de000}^{\alpha} \sum_{P\in \mathcal{F}^{k}(Q\cap K^c)} |\Crr{de1111} \Crr{de000}^{k-1}|^\alpha \\
&\leq& \Crr{ac2} \Crr{all}  \Crr{de000}^{\alpha}  \sum_{P\in \mathcal{F}^{k}(Q\cap K^c)} \mu(B_d(y_P,\Crr{de1111} \Crr{de000}^{k-1}))\\
&\leq&  \Crr{upper} \Crr{ac2} \Crr{all}  \Crr{de000}^{\alpha}\mu(\bigcup_{P\in \mathcal{F}^{k}(Q\cap K^c)} B_d(y_P,\Crr{de1111} \Crr{de000}^{k-1}))\\
&\leq&  \Crr{upper} \Crr{ac2} \Crr{all}  \Crr{de000}^{\alpha}\mu( B_d(q,4\Crr{de1111} \Crr{de000}^{k_0(Q)-1}))\\
&\leq&  \Crr{upper} \Crr{ac2}^2 \Crr{all}  \Crr{de000}^{\alpha}|4\Crr{de1111} \Crr{de000}^{k_0(Q)-1}|^\alpha\\
&\leq&  4^\alpha \Crr{upper} \Crr{ac2}^2 \Crr{all} \frac{\Crr{de1111}^\alpha}{\Crr{de111}^\alpha}(| \Crr{de111}\Crr{de000}^{k_0(Q)}|^D)^{\alpha/D}\\
&\leq&  4^\alpha \Crr{upper} \Crr{ac2}^2 \Crr{all}^{1+\alpha/D} \frac{\Crr{de1111}^\alpha}{\Crr{de111}^\alpha}(m(B_d(z_Q,\Crr{de111}\Crr{de000}^{k_0(Q)})))^{\alpha/D}\\
&\leq&  4^\alpha \Crr{upper} \Crr{ac2}^2 \Crr{all}^{1+\alpha/D} \frac{\Crr{de1111}^\alpha}{\Crr{de111}^\alpha} |Q|^{\alpha/D}\\
\end{eqnarray*}
Take
$$\Crr{rp2}= 4^\alpha \Crr{upper} \Crr{ac2}^2 \Crr{all}^{1+\alpha/D} \frac{\Crr{de1111}^\alpha}{\Crr{de111}^\alpha} .$$
Suppose now  that every element of $\mathcal{P}$ is a connected set and let  $\Omega\subset K^c$  be such that $\partial \Omega\subset K$. Given $Q\in \mathcal{P}$  let $\mathcal{F}^k(Q\cap\Omega)\subset \mathcal{F}^k(Q\cap K^c)$ be the subfamily of all $P\in \mathcal{F}^k(Q\cap K^c)$ such that $P\cap \Omega\neq \emptyset$. Since $P$ is a connected set, if $P$ is not contained in $\Omega$ we would have $\emptyset\neq P\cap \partial \Omega \subset P\cap K$, which contradicts $P\in \mathcal{F}^k(Q\cap K^c)$. So $P\subset \Omega$ and in particular
$$Q\cap \Omega = \cup_{k\geq k_0(Q)} \cup_{P\in \mathcal{F}^k(Q\cap \Omega)} P.$$
This easily implies that $\Omega$ is also a $(\alpha/D, \Crr{rp2},\Crr{k1})$-strongly regular domain.
\end{proof}

\begin{remark} \label{juliaex} The theory of conformal expanding dynamical systems provides plenty of examples of strongly regular domains on $\mathbb{R}^2$.  For instance consider   the Julia set $J$ of a hyperbolic polynomial. This is a compact subset, so let $I$ be a square such that $J \subset I$. We endowed $I$ with a good grid generated by dyadic squares $\mathcal{D}$ and the  Lebesgue measure $m$. The associated Besov space $\mathcal{B}^s_{p,q}(I,\mathcal{D})$ coincides with the Besov space of the homogenous space $(I,m)$.  Then $J$ supports  a {\it geometric measure } $\mu$ that satisfies the assumptions of Proposition \ref{quasi}.  See Przytycki and Urba\'nski \cite{pu} and also the survey Urba\'nski \cite{u}. In particular every connected component of  $I\cap J^c$ is a strongly regular domain of the homogenous space $(I,m)$. We can consider for instance the Julia set $J$  of a quadratic polynomial $x^2+c$, with $c$ small (see Figure 3.)
\end{remark}

\vspace{1cm}
\newpage

\centerline{ \bf  II. GETTING $\mathbb{R}$EAL.}
\addcontentsline{toc}{chapter}{\bf II. GETTING $\mathbb{R}$EAL.}
\vspace{1cm}

\section{Isometry with Besov spaces defined by intervals}
A case that  provides a very rich class of examples is when   $I=[0,1]$, $m$ is the Lebesgue measure and the good grid consists in sequences of partitions by intervals. It turns out that in the point of view of Banach spaces up to isometries, those Besov spaces are the only ones.

\begin{proposition} Let $\mathcal{B}^s_{p,q}(I, \mathcal{P},\mathcal{A}^{sz}_{s,p})$, with $0< s$ and $p,q\geq 1$,  be a Besov-ish space defined on a  probability space $(I,m)$ with a good grid $\mathcal{P}$. Then there is a good grid $\mathcal{G}$ formed by partitions by intervals of $[0,1]$ such that, considering the Lebesgue measure on $I$ we have that the  Banach space $\mathcal{B}^s_{p,q}(I, \mathcal{P},\mathcal{A}^{sz}_{s,p})$ is isometric to $\mathcal{B}^s_{p,q}([0,1], \mathcal{G},\mathcal{A}^{sz}_{s,p})$.
\end{proposition}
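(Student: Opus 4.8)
The guiding principle I would exploit is that the norm $|\cdot|_{\mathcal{B}^s_{p,q}}$ records only the coefficients $s_Q$ of an atomic representation $f=\sum_k\sum_{Q\in\mathcal{P}^k}s_Q a_Q$, grouped by the level $k$ with $Q\in\mathcal{P}^k$; it never sees the ambient set $I$ beyond the refinement tree of the grid and the numbers $|Q|$. So the plan is to reproduce that \emph{weighted refinement tree} inside $[0,1]$ using intervals, and then transport functions along an induced $L^p$-isometry. To build $\mathcal{G}$, view $\mathcal{P}=\cup_k\mathcal{P}^k$ as a tree (the children of $P\in\mathcal{P}^k$ being the $Q\in\mathcal{P}^{k+1}$ with $Q\subset P$), fix an arbitrary linear order on $\mathcal{P}^0$ and on each sibling set, and place the elements as consecutive intervals following a depth-first traversal: $\mathcal{P}^0$ tiles $[0,1]$ by adjacent intervals $J_P$ of length $|P|$ (legitimate since $\sum_{P\in\mathcal{P}^0}|P|=m(I)=1$), and the children of each $P$ tile $J_P$ in the same way. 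This yields for each $Q$ a unique interval $J_Q$ with $|J_Q|=|Q|$, such that $Q\subset P\iff J_Q\subset J_P$ and $Q\in\mathcal{P}^k\iff J_Q\in\mathcal{G}^k$, where $\mathcal{G}^k=\{J_Q:\ Q\in\mathcal{P}^k\}$. Each $\mathcal{G}^k$ partitions $[0,1]$ up to endpoints; the ratio condition (iii) is inherited verbatim because $|J_Q|/|J_P|=|Q|/|P|$; and since the ratio bound forces $|Q|\le\lambda^k$ for $Q\in\mathcal{P}^k$, the intervals have length $\le\lambda^k\to0$, so $\cup_k\mathcal{G}^k$ generates the Borel $\sigma$-algebra. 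Hence $\mathcal{G}$ is a good grid of intervals.

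Next I would produce the isometry. On the space of functions that are constant on each $Q\in\mathcal{P}^N$ (for some $N$), define $\Phi_0\big(\sum_{Q\in\mathcal{P}^N}c_Q 1_Q\big)=\sum_{Q\in\mathcal{P}^N}c_Q 1_{J_Q}$. This is consistent under refinement, because $1_Q=\sum_{Q'\subset Q}1_{Q'}$ is sent to $\sum_{Q'\subset Q}1_{J_{Q'}}=1_{J_Q}$, and it is isometric since the $J_Q$ are disjoint with $|J_Q|=|Q|$, giving $|\Phi_0 f|_p^p=\sum_Q|c_Q|^p|Q|=|f|_p^p$. As $\cup_k\mathcal{P}^k$ generates $\mathbb{A}$ and the partitions are refining, such simple functions are dense in $L^p(I)$ (martingale convergence, $p<\infty$), so $\Phi_0$ extends to a linear $L^p$-isometry $\Phi\colon L^p(I)\to L^p([0,1])$; the symmetric construction $1_{J_Q}\mapsto 1_Q$ produces a two-sided inverse, so $\Phi$ is onto. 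By linearity and continuity $\Phi(a_Q)=|Q|^{s-1/p}1_{J_Q}=a_{J_Q}$ and $\Phi^{-1}(a_{J_Q})=a_Q$.

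Finally I would transfer representations. If $f=\sum_k\sum_{Q\in\mathcal{P}^k}s_Q a_Q$ is a $\mathcal{B}^s_{p,q}(\mathcal{P})$-representation, then applying the bounded operator $\Phi$ gives $\Phi(f)=\sum_k\sum_{Q\in\mathcal{P}^k}s_Q a_{J_Q}$, an absolutely $L^p$-convergent series (because $\Phi$ is isometric) and hence a $\mathcal{B}^s_{p,q}(\mathcal{G})$-representation with the identical coefficients and identical level-grouping. Thus $|\Phi(f)|_{\mathcal{B}(\mathcal{G})}\le\big(\sum_k(\sum_{Q\in\mathcal{P}^k}|s_Q|^p)^{q/p}\big)^{1/q}$, and taking the infimum over representations yields $|\Phi(f)|_{\mathcal{B}(\mathcal{G})}\le|f|_{\mathcal{B}(\mathcal{P})}$; the same argument applied to $\Phi^{-1}$ gives the reverse inequality, so $\Phi$ restricts to a surjective isometry between the two Besov spaces. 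I expect the only genuinely delicate point to be the middle step — verifying that $\Phi$ is a well-defined, surjective $L^p$-isometry, i.e. the consistency of $\Phi_0$ under refinement together with the $L^p$-density of grid-simple functions. Once that $L^p$-isometry is secured, the passage to the Besov norm is pure bookkeeping, since that norm is a functional of the coefficient array alone. (Incidentally, the bound $|Q|\le\lambda^k$ also shows $m$ is automatically non-atomic, though this is not needed.)
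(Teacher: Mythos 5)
Your proposal is correct and follows essentially the same route as the paper: you construct the interval grid $\mathcal{G}$ by transporting each $Q\in\mathcal{P}$ to an interval $J_Q$ of the same measure preserving nesting and disjointness (the paper's map $h$, which you make explicit via the depth-first tiling), extend $1_Q\mapsto 1_{J_Q}$ to a surjective $L^p$-isometry sending Souza atoms to Souza atoms, and transfer atomic representations coefficient-by-coefficient. The only cosmetic difference is that the paper delegates the final transfer step to Proposition besov-lp of the companion paper, whereas you carry out that bookkeeping by hand.
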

\begin{proof} We can easy define a function $h$ that associated to each $P\in \mathcal{P}$ an  interval $h(P)\subset [0,1]$ such that 
\begin{itemize}
\item[i.] $m(P)=|h(P)|,$
\item[ii.] If $P,Q\in \mathcal{P}$, with $m(P\cap Q)=0$ then $|h(P)\cap h(Q)|=0$.
\item[iii.] If $P,Q\in \mathcal{P}$ with $P\subset Q$ then $\overline{h(P)}\subset \overline{h(Q)}$.
\end{itemize}
Define $\mathcal{G}^k=\{h(P), \ P\in \mathcal{P}^k\}$ and
$$T(\sum_{i\leq n} c_i 1_{P_i})=\sum_{i\leq n} c_i 1_{h(P_i)}$$
for every $c_i\in\mathbb{C}$ and $P_i\in \mathcal{P}$. Then $T$ is well defined and it extends to an isometry
$$T\colon L^p(I)\rightarrow L^p([0,1]).$$
It is easy to see that for every canonical Souza's atom $a_P$ we have $T(a_P)=a_{h(P)}$.  Together with Proposition \ref{besov-lp} in \cite{smania-besov} this implies that 
$$T\colon \mathcal{B}^s_{p,q}(I, \mathcal{P},\mathcal{A}^{sz}_{s,p})\rightarrow \mathcal{B}^s_{p,q}([0,1], \mathcal{G},\mathcal{A}^{sz}_{s,p})$$
is well defined and an isometry.
\end{proof}

\begin{corollary} Let $(I,\rho,m)$ be a homogeneous space such that $I$ is compact and $m(I)=1$. Then the Besov space $B^s_{p,q}$, with $0< s< 1/p$, $p,q\geq 1$ and $s$ small,  is isomorphic  to the corresponding Besov space of the homogeneous space given by $[0,1]$, the usual metric and the  Lebesgue measure.\end{corollary}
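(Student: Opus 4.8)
The plan is to combine three facts already available in the paper: that the Besov space of the homogeneous space $(I,\rho,m)$ is realised as a grid Besov space with Souza's atoms (Definition~\ref{besovhomog}); that every such abstract grid Besov space is isometric to one carried by a grid of intervals of $[0,1]$ (the previous proposition); and that on $[0,1]$ with the Lebesgue measure the grid Besov space does not depend on the chosen grid, as long as it is of the type produced by Proposition~\ref{fgh} with $\eta>sp$ (Proposition~\ref{besovhom}).

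First I would fix an Ahlfors regular quasi-metric $d$ power-law equivalent to $\rho$ and a good grid $\mathcal{P}$ as in Proposition~\ref{fgh} with $\eta>sp$ (possible since $s$ is small, i.e. $\tilde{\eta}>sp$), so that by Definition~\ref{besovhomog} the Besov space of $(I,\rho,m)$ equals $\mathcal{B}^s_{p,q}(I,\mathcal{P},\mathcal{A}^{sz}_{s,p})$. Applying the previous proposition to this grid produces a good grid $\mathcal{G}=\{h(P):P\in\mathcal{P}\}$ of $[0,1]$ made of intervals, with $|h(P)|=m(P)$, together with an isometry $T\colon\mathcal{B}^s_{p,q}(I,\mathcal{P},\mathcal{A}^{sz}_{s,p})\to\mathcal{B}^s_{p,q}([0,1],\mathcal{G},\mathcal{A}^{sz}_{s,p})$.

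The heart of the argument is to show that $\mathcal{G}$ is itself a grid of the type produced by Proposition~\ref{fgh} for the $1$-Ahlfors regular metric space $([0,1],|\cdot|,m)$, with $\eta=1$. Writing $\mu=\Crr{de000}^D$, the estimates (\ref{pp}) and (\ref{pp2}) together with Ahlfors regularity (\ref{alf2}) give $m(P)\sim\mu^k$ uniformly for $P\in\mathcal{P}^k$, whence every $Q=h(P)\in\mathcal{G}^k$ is an interval of length $|Q|=m(P)\sim\mu^k$. For such an interval the analogues of (\ref{pp}) and (\ref{pp2}) (with $\Crr{de000}$ replaced by $\mu$) are immediate: one takes $z_Q$ to be the midpoint, and $\operatorname{diam}Q=|Q|\sim\mu^k$. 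The regularity condition (\ref{ppp}) is where the whole construction is decided: the set of points of $Q$ within distance $\delta$ of $[0,1]\setminus Q$ is a union of at most two end-intervals and so has measure at most $2\delta$; taking $\delta=\Crr{de00}\,t\,\mu^k$ and using $|Q|\sim\mu^k$ yields a bound of the form $\Crr{oo}\,t\,|Q|$, i.e. (\ref{ppp}) with exponent $\eta=1$. The main obstacle is precisely this step, and the point worth stressing is that it succeeds only because the intervals of $\mathcal{G}^k$ all have comparable length; this uniform comparability is inherited from the Ahlfors regularity of $(I,d,m,D)$ and is exactly the ``recalibration'' property whose failure produces the exotic grids of Part~II.

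Finally, the dyadic grid $\mathcal{P}_0$ of $[0,1]$ is also a grid as in Proposition~\ref{fgh} for $([0,1],|\cdot|,m)$, with $\eta=1$, and by Definition~\ref{besovhomog} it defines $\mathbb{B}^s_{p,q}([0,1])$. Since $0<s<1/p$ gives $sp<1=\min\{\eta_{\mathcal{G}},\eta_{\mathcal{P}_0}\}$, Proposition~\ref{besovhom} yields $\mathcal{B}^s_{p,q}([0,1],\mathcal{G},\mathcal{A}^{sz}_{s,p})=\mathbb{B}^s_{p,q}([0,1])$ with equivalent norms. Composing the isometry $T$ with this norm-equivalent identity produces a Banach-space isomorphism between the Besov space of $(I,\rho,m)$ and $\mathbb{B}^s_{p,q}([0,1])$, which is the assertion of the corollary.
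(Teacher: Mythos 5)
Your proof is correct and takes essentially the same route as the paper: the paper's own proof is a one-line citation of Proposition \ref{besovhom} together with the identification of the Besov space of a homogeneous space with a grid Besov space, combined implicitly with the isometry proposition immediately preceding the corollary, which is exactly the chain you assemble. The one substantive detail the paper leaves unspoken --- that the transferred interval grid $\mathcal{G}=h(\mathcal{P})$ on $[0,1]$ satisfies the conclusions of Proposition \ref{fgh} with $\eta=1$ because Ahlfors regularity forces all intervals of $\mathcal{G}^k$ to have length comparable to $\Crr{de000}^{Dk}$ --- is precisely the step you verify, and your verification is sound.
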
 
\begin{proof} It follows from Proposition \ref{besovhom} and Proposition \ref{besovreal}.
\end{proof}

\section{Quasisymmetric grids}

In this section   $I=[0,1]$, $m$ is the Lebesgue measure and all  grids consists in sequences of partitions by intervals. We say that a good grid $\mathcal{P}$ consisting on  sequence of partitions of $I$ by intervals  is a  $\Cll{qs}$-{\bf quasisymmetric grid} if  for every $k\geq 0$ and $P, Q\in \mathcal{P}^k$ satisfying $\overline{P}\cap \overline{Q}\neq\emptyset$  we have
\begin{equation}\label{qsc} \frac{1}{\Crr{qs}}\leq \frac{|Q|}{|P|}\leq \Crr{qs}.\end{equation}
Let $Q$ be a closed interval in  $[0,1]$. Define 
$$k_0(Q)=\min \{ k\in \mathbb{N}, \text{ there is $P\in \mathcal{P}^k$ s.t. } P\subset Q\}.$$

\begin{lemma} \label{compi} The following holds.
\begin{itemize}
\item[A.] Let $\mathcal{P}$ be a $(\Crr{menor},\Crr{maior})$-good  grid. Then for every interval $Q=[a,b]\subset [0,1]$ there are families of  intervals $\mathcal{F}_1\subset  \mathcal{F}_2\subset \mathcal{P}^{k_0(Q)}$ such that 
$$\cup_{P\in \mathcal{F}_i} \overline{P}$$
is an interval. Moreover
$$\mathcal{F}_1=\{ P\in \mathcal{P}^{k_0(Q)}, \ \overline{P}\subset \overline{Q} \}$$
and
$$1\leq \#\mathcal{F}_i \leq \frac{2}{\Crr{menor}}+2$$
for $i=1,2$, and
$$\bigcup_{P\in \mathcal{F}_1} \overline{P} \subset \overline{Q}\subset \bigcup_{P\in \mathcal{F}_2} \overline{P}.$$
\item[B.] Suppose additionally that $\mathcal{P}$ is a $\Crr{qs}$-quasisymmetric grid. 
There is $\Cll{th} > 1$ such that for every interval $Q=[a,b]\subset [0,1]$ and every $z\in \partial Q$ and $W \in \mathcal{P}^{k_0(Q)}$ such that $z \in \overline{W}$  we have 
$$\frac{1}{\Crr{th}}\leq \frac{|Q|}{|W|}\leq \Crr{th}.$$
and for every $P\in \mathcal{F}_2$ we have
$$\frac{1}{\Crr{th}}\leq \frac{|Q|}{|P|}\leq \Crr{th}.$$
\end{itemize}
\end{lemma}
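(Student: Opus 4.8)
The plan is to work entirely at the single level $k := k_0(Q)$ and its predecessor $k-1$, using the minimality in the definition of $k_0(Q)$ as the only essential input. If $Q=[0,1]$ then $k=0$ and $\mathcal{F}_1=\mathcal{F}_2=\{[0,1]\}$ works, so I assume $Q\neq[0,1]$, whence $k\geq 1$. I let $\mathcal{F}_2$ be the set of all $P\in\mathcal{P}^k$ with $P\cap\mathrm{int}\,Q\neq\emptyset$. Since $\mathcal{P}^k$ partitions $[0,1]$ into intervals, these form a consecutive block, so $\bigcup_{P\in\mathcal{F}_2}\overline{P}$ is a closed interval containing $\overline{Q}$; deleting the at most two end intervals of this block that protrude beyond $\overline{Q}$ leaves exactly $\mathcal{F}_1=\{P:\overline{P}\subset\overline{Q}\}$, again a consecutive block whose closures union to an interval inside $\overline{Q}$. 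By the definition of $k_0(Q)$ we have $\mathcal{F}_1\neq\emptyset$, together with $\mathcal{F}_1\subset\mathcal{F}_2$ and $\#\mathcal{F}_1\geq\#\mathcal{F}_2-2$. This settles every structural assertion of A except the cardinality bound.

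The crux, and the step I expect to be the main obstacle, is the cardinality bound, for which I would push the minimality of $k_0(Q)$ one level up. \emph{Claim:} at most two intervals of $\mathcal{P}^{k-1}$ meet $\mathrm{int}\,Q$. Indeed, if $R_1<R_2<\cdots<R_m$ (ordered left to right) are the level-$(k-1)$ intervals meeting $\mathrm{int}\,Q$ and $m\geq 3$, then each interior $R_i$ ($2\leq i\leq m-1$) has left endpoint (shared with $R_{i-1}$) strictly right of $a$ and right endpoint (shared with $R_{i+1}$) strictly left of $b$, so $R_i\subset Q$, contradicting that no element of $\mathcal{P}^{k-1}$ lies in $Q$. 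Hence $m\leq 2$ and $\overline{Q}\subset\overline{R_1}\cup\overline{R_2}$. Every $P\in\mathcal{F}_2$ sits inside its parent in $\mathcal{P}^{k-1}$, which meets $\mathrm{int}\,Q$ and is therefore $R_1$ or $R_2$; since the good-grid lower bound gives $|P|\geq\Crr{menor}|R_j|$ for any child $P$ of $R_j$, each $R_j$ has at most $1/\Crr{menor}$ children, so $\#\mathcal{F}_2\leq 2/\Crr{menor}\leq 2/\Crr{menor}+2$, proving A.

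For B I would first upgrade the two-parent picture by quasisymmetry. As $R_1,R_2$ are adjacent at level $k-1$, (\ref{qsc}) makes them comparable, so $|Q|\leq|R_1|+|R_2|\leq(1+\Crr{qs})\min(|R_1|,|R_2|)$, while every $P\in\mathcal{F}_2$ has $|P|\geq\Crr{menor}\min(|R_1|,|R_2|)$; this already yields $|Q|/|P|\leq(1+\Crr{qs})/\Crr{menor}$. For the reverse estimate I would propagate quasisymmetry \emph{along} the block $\mathcal{F}_2$: consecutive members touch, so iterating (\ref{qsc}) at most $\#\mathcal{F}_2-1\leq 2/\Crr{menor}$ times makes all members mutually comparable with the fixed factor $\Lambda:=\Crr{qs}^{\lceil 2/\Crr{menor}\rceil}$. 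Choosing $P_0\in\mathcal{F}_1$ (so $|P_0|\leq|Q|$) gives $|P|\leq\Lambda|P_0|\leq\Lambda|Q|$, hence $|Q|/|P|\geq 1/\Lambda$; and comparing with the covering $\sum_{P'\in\mathcal{F}_2}|P'|\geq|Q|$ gives $|Q|\leq\#\mathcal{F}_2\cdot\Lambda|P|\leq(2\Lambda/\Crr{menor})|P|$. Taking $\Crr{th}$ at least $2\Lambda/\Crr{menor}$ proves the bound for all $P\in\mathcal{F}_2$.

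The boundary statement needs one extra observation, because a $W\in\mathcal{P}^k$ with $z\in\overline{W}$ for $z\in\partial Q$ need \emph{not} lie in $\mathcal{F}_2$: it may sit just outside $Q$ and meet it only at $z$. If $W\in\mathcal{F}_2$ we are done by the previous paragraph. Otherwise $z$ is interior to $[0,1]$ and the level-$k$ interval on the $Q$-side of $z$ is some $W'\in\mathcal{F}_2$ with $\overline{W}\cap\overline{W'}=\{z\}$, so (\ref{qsc}) gives $|W|\leq\Crr{qs}|W'|$ and $|W'|\leq\Crr{qs}|W|$; combined with $|Q|/|W'|\in[1/\Crr{th},\Crr{th}]$ this controls $|Q|/|W|$ up to the extra factor $\Crr{qs}$. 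Enlarging $\Crr{th}$ to $2\Lambda\Crr{qs}/\Crr{menor}$ absorbs every case and completes B.
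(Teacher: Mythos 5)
Your proof is correct and takes essentially the same route as the paper's: the same forced choice of $\mathcal{F}_1$, the same consecutive block $\mathcal{F}_2$ of level-$k_0(Q)$ intervals covering $\overline{Q}$ (the paper builds it as $\mathcal{F}_1$ plus one neighbour on each side, you as the intervals meeting $\mathrm{int}\,Q$ — a cosmetic difference), and in part B the same chaining of the quasisymmetry inequality along at most $2/\Crr{menor}+2$ adjacent intervals, with an extra adjacency step to reach a $W$ touching $Q$ only at $\partial Q$. Your explicit two-parent argument (at most two level-$(k_0(Q)-1)$ intervals can meet $\mathrm{int}\,Q$ by minimality of $k_0(Q)$, each having at most $1/\Crr{menor}$ children) is precisely the justification the paper leaves implicit for its asserted bound $\#\mathcal{F}_1\leq 2/\Crr{menor}$.
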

\begin{proof}  Let 
$$\mathcal{F}_1=\{ P\in \mathcal{P}^{k_0(Q)}, \ \overline{P}\subset \overline{Q} \}$$
and
$$[a_1,b_1]=\bigcup_{P\in \mathcal{F}_1} \overline{P}.$$
Then
$$1\leq \#\mathcal{F}_1\leq \frac{2}{\Crr{menor}}.$$
If $a_1=0$ then $a=0$ and we define $a_2=a_1$. Otherwise let $a_2< a_1$  be such that  $[a_2,a_1]\in \mathcal{P}^{j(Q)}$. If $b_1=1$ then $b=1$ and we define $b_2=b_1$. Otherwise let $b_2$ be such that $b_1< b_2$ and  $[b_1,b_2]\in \mathcal{P}^{j(Q)}$.  Of course $a\in [a_2,a_1]$ and $b\in [b_1,b_2]$. Let 
$$\mathcal{F}_2=\mathcal{F}_1\cup\{ [a_2,a_1], [b_1,b_2]\},$$
with obvious adaptation if either $a_1=0$ or $b_1=1$. Furthermore if $\mathcal{P}$ is a $\Crr{qs}$-quasisymmetric grid then  for every 
$z\in \{a,b\}$ and $W \in \mathcal{P}^{j(Q)}$ such that $z \in \overline{W}$ we have
$$\Crr{qs}^{-\frac{2}{\Crr{menor}}-2 } (\frac{2}{\Crr{menor}}+2)^{-1}  \leq \frac{1}{|W|}\min \{|P|, \ P\in \mathcal{F}_1 \} \leq \frac{|Q|}{|W|}\leq  \frac{1}{|W|}\sum_{P\in \mathcal{F}_2} |P|\leq  (\frac{2}{\Crr{menor}}+2)  \Crr{qs}^{\frac{2}{\Crr{menor}}+2 },$$
and the same estimates holds replacing $W$ by $P\in \mathcal{F}_2$.
\end{proof}

\begin{proposition} The following holds.
\begin{itemize}
\item[A.] Let $h\colon [0,1]\rightarrow [0,1]$ be a quasisymmetric map and $\mathcal{D}_N$  be the grid of N-adic intervals. Then the grid $\mathcal{P}=h(\mathcal{D}_N)$ defined by 
$$\mathcal{P}^k=\{h(P), \ P\in \mathcal{D}^k_N\}$$
is a quasisymmetric grid. 
\item[B.]  On the other hand if $\mathcal{P}$ is a quasisymmetric grid such that every $P\in \mathcal{P}$ has exactly $N$ children then there is a quasisymmetric function $h$ such that  $\mathcal{P}=h(\mathcal{D}_N)$.
\end{itemize}
\end{proposition}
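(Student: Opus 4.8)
The plan is to prove the two implications separately, treating the forward direction (A) as an essentially direct unwinding of the definition of quasisymmetry, and reserving the real work for the converse (B). Throughout I use the Beurling--Ahlfors form of quasisymmetry for a homeomorphism $h$ of $[0,1]$: there is $M\geq 1$ with
\[ \frac{1}{M}\leq \frac{h(y+t)-h(y)}{h(y)-h(y-t)}\leq M \]
for all admissible $y,t$.

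For part A, I would first check that $\mathcal{P}=h(\mathcal{D}_N)$ is a good grid. Each $\mathcal{P}^k=h(\mathcal{D}_N^k)$ is a partition of $[0,1]$ into intervals because $h$ is an increasing homeomorphism; $\cup_k\mathcal{P}^k$ generates the Borel $\sigma$-algebra since $\cup_k\mathcal{D}_N^k$ does and $h$ is a homeomorphism; and the child/parent ratio is controlled as follows. The $N$ children of a given $P\in\mathcal{D}_N^k$ are consecutive N-adic intervals of common length $N^{-k-1}$ sharing endpoints, so the displayed inequality, applied to consecutive pairs and chained, shows their $h$-images have mutually comparable lengths, with ratios in $[M^{-(N-1)},M^{N-1}]$; since these sum to $|h(P)|$, each image has length between $\hat\lambda|h(P)|$ and $\lambda|h(P)|$ for constants $0<\hat\lambda\leq\lambda<1$ depending only on $N$ and $M$. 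The quasisymmetric-grid property is then immediate: two adjacent members of $\mathcal{P}^k$ are $h(P),h(Q)$ with $P,Q\in\mathcal{D}_N^k$ adjacent of common length $t=N^{-k}$ sharing an endpoint $y$, so the displayed inequality gives $|h(Q)|/|h(P)|\in[M^{-1},M]$, and we may take $\Crr{qs}=M$.

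For part B, the first step is to build $h$ from the combinatorial identification of the two trees. Because every interval of $\mathcal{P}$ has exactly $N$ children, there is a canonical order- and inclusion-preserving bijection between $\mathcal{D}_N^k$ and $\mathcal{P}^k$ for each $k$ (index both by words in $\{0,\dots,N-1\}^k$). I would define $h$ on the N-adic rationals by sending the left endpoint of the N-adic interval coded by a word to the left endpoint of the corresponding $\mathcal{P}$-interval; this map is strictly increasing on a dense set, and since the endpoints of $\mathcal{P}$ are also dense (the mesh of $\mathcal{P}^k$ tends to $0$ because the good-grid child/parent ratios are bounded away from $1$), it extends uniquely to an increasing homeomorphism $h\colon[0,1]\to[0,1]$ with $h(\mathcal{D}_N)=\mathcal{P}$. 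The heart of the matter is to show this $h$ is quasisymmetric, i.e.\ to bound $|h(A)|/|h(B)|$ for $A=[x-t,x]$ and $B=[x,x+t]$, and this is where Lemma \ref{compi} enters. Set $\hat I=[x-t,x+t]$ and apply Lemma \ref{compi}.A to $\hat I$ in the grid $\mathcal{D}_N$ (itself quasisymmetric with constant $1$): this produces families $\mathcal{F}_1\subset\mathcal{F}_2\subset\mathcal{D}_N^{k_0}$, $k_0=k_0(\hat I)$, of uniformly bounded cardinality with $\cup\mathcal{F}_1\subseteq\hat I\subseteq\cup\mathcal{F}_2$, and by Lemma \ref{compi}.B every interval of $\mathcal{F}_2$ has length comparable to $t$. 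Pushing forward by $h$ and using that $\mathcal{P}$ is quasisymmetric, the boundedly many adjacent images $\{h(P):P\in\mathcal{F}_2\}$ all have mutually comparable lengths, so $|h(\hat I)|\asymp|h(P)|$ for any $P\in\mathcal{F}_2$. Finally, since $|A|=|B|=t\asymp N^{-k_0}$, each of $A$ and $B$ contains an N-adic interval of level at most $k_0+c$ for a bounded $c=c(N)$; iterating the good-grid lower bound $\hat\lambda$ at most $c$ times shows its $h$-image has length $\gtrsim\hat\lambda^{\,c}|h(\hat I)|$. Hence $|h(A)|$ and $|h(B)|$ are each bounded below by a fixed fraction of $|h(\hat I)|=|h(A)|+|h(B)|$, giving $|h(A)|/|h(B)|\leq M$ with $M$ depending only on $N$, $\Crr{qs}$ and the good-grid constants.

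I expect the main obstacle to be precisely this last chaining estimate: translating the purely combinatorial, scale-$k_0$ comparability furnished by the quasisymmetric-grid hypothesis into the genuinely continuous, all-scales inequality defining quasisymmetry. The bookkeeping of how many levels $c$ one must descend into $A$ and $B$, and the uniformity of every comparability constant in $x$ and $t$, is the delicate point; by contrast the well-definedness and the homeomorphism property of $h$, together with the verification in part A, are routine.
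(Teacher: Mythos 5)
Your proposal is correct, and it follows the same overall skeleton as the paper while mirroring the direction of the key verification. For part A your argument coincides with the paper's (quasisymmetry of $h$ applied to adjacent equal-length $N$-adic intervals immediately yields (\ref{qsc})), and you additionally check the good-grid child/parent ratio bounds, a point the paper compresses into ``in particular.'' For part B, both you and the paper construct $h$ from the level-by-level tree isomorphism between $\mathcal{D}_N$ and $\mathcal{P}$ and both rest on Lemma \ref{compi}, but the paper works with the inverse map: it defines $\phi(x)=\lim_k N^{-k}\,\#\{P\in\mathcal{P}^k\colon P\subset[0,x]\}$ (so $h=\phi^{-1}$) and applies Lemma \ref{compi} to the grid $\mathcal{P}$ itself. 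Since $\phi$ sends each element of $\mathcal{P}^j$ onto an interval of length exactly $N^{-j}$, Lemma \ref{compi}.A pins $|\phi(L)|$ and $|\phi(R)|$ to $N^{-k_0(L)}$ and $N^{-k_0(R)}$ up to the factor $2/\Crr{menor}+2$, and Lemma \ref{compi}.B combined with the inequality $\min\{|W_L|/|W_R|,|W_R|/|W_L|\}\leq \Crr{qs}\,\Crr{maior}^{|k_0(L)-k_0(R)|}$ bounds the level gap; quasisymmetry of $h$ is then deduced from the standard fact that inverses of quasisymmetric maps are quasisymmetric. You instead prove quasisymmetry of $h$ directly: Lemma \ref{compi} is applied to the model grid $\mathcal{D}_N$ (quasisymmetric with constant $1$), comparability is transported to the image side by chaining (\ref{qsc}) across the boundedly many adjacent elements of $\mathcal{F}_2$, and the two halves $A,B$ are handled by locating inside each an $N$-adic interval of level at most $k_0+c$ and iterating the child/parent lower bound $c$ times, giving $|h(A)|,|h(B)|\geq \epsilon\, |h(\hat I)|$ and hence a bounded ratio. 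The trade-off is clear: the paper's direction buys exact arithmetic (image lengths are exact powers of $N^{-1}$) at the price of invoking closure of quasisymmetry under inversion, while your direction avoids that fact at the price of the chaining and containment bookkeeping, which you carry out correctly and with constants uniform in $x$ and $t$.
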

\begin{proof}[Proof of A] If $h$ is a quasisymmetric map then there is $\Crr{qs} > 0$ such that (\ref{qsc}) holds for every two intervals $P=[x-\delta,x]$, $Q=[x,x+\delta]$ with $\{x-\delta,x+\delta\} \subset [0,1]$.  In particular  $h(\mathcal{D}_N)$ is a quasisymmetric grid.
\end{proof}
\begin{proof}[Proof of B]  Define $\phi\colon [0,1]\rightarrow [0,1]$ as 
$$\phi(x)= \lim_k \frac{1}{N^k} \# \{ P\in \mathcal{P}^k, \ P\subset [0,x]  \}.$$ 
the function $\phi$ is well-defined since the sequence in the r.h.s. is non-decreasing and bounded by $1$. It is easy to see that $\phi$ is monotone increasing and continuous, so it is a homeomorphism. 
Let $L=[x-\delta,x]$, $R=[x,x+\delta]$ with $\{x-\delta,x+\delta\} \subset [0,1]$.  It follows from Lemma \ref{compi}.A. that 
\begin{equation}\label{qw1}\frac{1}{N^{j(L)}} \leq \phi(x)-\phi(x-\delta)\leq \frac{1}{N^{j(L)}} \big( \frac{2}{\Crr{menor}}+2\big)\end{equation}
and
\begin{equation}\label{qw2}\frac{1}{N^{j(R)}} \leq \phi(x+\delta)-\phi(x)\leq \frac{1}{N^{j(R)}} \big( \frac{2}{\Crr{menor}}+2\big).\end{equation}
Let $W_L\in \mathcal{P}^{j(L)}$, $W_R\in \mathcal{P}^{j(R)}$ be such that $x\in \overline{W}_R\cap \overline{W}_L$.  Due Lemma \ref{compi}.B we have
$$\Crr{th}^{-1} \leq   \frac{\delta}{|W_R|}\leq \Crr{th}, \ \Crr{th}^{-1} \leq   \frac{\delta}{|W_L|}\leq \Crr{th},$$
so
$$\Crr{th}^{-2} \leq  \min \{  \frac{|W_L|}{|W_R|}, \frac{|W_R|}{|W_L|} \}.$$
On the other hand since $\mathcal{P}$ is a quasisymmetric (and in particular good) grid we have 
$$\min \{  \frac{|W_L|}{|W_R|}, \frac{|W_R|}{|W_L|} \} \leq \Crr{qs} \Crr{maior}^{|j(L)-j(R)|}$$
so $|j(L)-j(R)|\leq \Cll{uuq}$, where $\Crr{uuq}$ does not depend on $x$ and $\delta$.  By (\ref{qw1}) and (\ref{qw2})
$$  N^{-\Crr{uuq}} \big( \frac{2}{\Crr{menor}}+2\big)^{-1} \leq \frac{|\phi(x)-\phi(x-\delta)|}{|\phi(x+\delta)-\phi(x)|}\leq N^{\Crr{uuq}} \big( \frac{2}{\Crr{menor}}+2\big),$$
so $\phi$  is a quasisymmetric map and consequently the same holds for  $h=\phi^{-1}$.
\end{proof}

\begin{proposition} \label{fam} Let $\mathcal{P}$ be a quasisymmetric $(\Cll[c]{menor},\Cll[c]{maior})$-good  grid and $Q\subset [0,1]$ be an interval. There are families of intervals  $\mathcal{F}^k(Q)$, $k\in \mathbb{Z}$,   and increasing sequences $j_i^+,j_i^- \in \mathbb{N}$ such that 
\begin{itemize}
\item[A.] We have that 
$$\cup_{k \in \mathbb{Z}} \mathcal{F}^k(Q)$$
is a countable partition of $Q$.
\item[B.] We have
$$\mathcal{F}^i(Q) \subset  \mathcal{P}^{j_{i}^+}, \ \mathcal{F}^{-i}(Q) \subset  \mathcal{P}^{j_{i}^-},$$
with $$j_0^+=j_0^-=\min \{j\geq 0 \text{ s.t. there is } P \in \mathcal{P}^j \text{ satisfying }  \overline{P}\subset Q\}. $$
\item[C.]  There exists $\theta \in [0,1)$ and $r_0$  such that 
\begin{equation}\label{grow1} |S|\leq \Crr{qs}^{2/\Crr{menor}+1} \Crr{maior}^r    |W|\end{equation}
for every $S\in \mathcal{F}^{-i-r}(Q)$, $W\in \mathcal{F}^{-i}(Q)$, $i\geq 0$,  and 
for every $S\in \mathcal{F}^{i+r}(Q)$, $W\in \mathcal{F}^{i}(Q)$, $i\geq 0$.
\item[D.] We have 
$$1\leq \# \mathcal{F}^k(Q) \leq \frac{2}{\Crr{menor}}.$$
\end{itemize}
\end{proposition}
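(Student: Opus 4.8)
The plan is to construct the families $\mathcal{F}^k(Q)$ by an inductive peeling procedure, starting from the "core" of $Q$ at level $k_0(Q)$ and then expanding outward in both directions. First I would set $j_0^+ = j_0^- = k_0(Q)$ and take $\mathcal{F}^0(Q)$ to be the family $\mathcal{F}_1$ from Lemma \ref{compi}.A, namely all $P \in \mathcal{P}^{k_0(Q)}$ with $\overline{P} \subset \overline{Q}$; by that lemma this is a nonempty family whose union $[a_1,b_1]$ is an interval contained in $Q$, with at most $2/\Crr{menor}$ elements. The two remaining pieces of $Q$ are the left gap $[a,a_1]$ and the right gap $[b_1,b]$, each of which is a (possibly degenerate) interval strictly smaller than any element of $\mathcal{F}^0(Q)$.

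The core of the argument is to handle each gap by iterating Lemma \ref{compi}.A on the residual interval. For the right gap $G = [b_1, b]$, since $G$ is shorter than the adjacent element $W_R \in \mathcal{P}^{k_0(Q)}$ touching $b_1$, its own $k_0(G)$ satisfies $k_0(G) > k_0(Q)$, and I would set $j_1^+ = k_0(G)$ and let $\mathcal{F}^1(Q)$ be the elements of $\mathcal{P}^{j_1^+}$ whose closures lie in $\overline{G}$. This again fills an interval and leaves a strictly smaller residual interval, and I continue inductively: at each stage the residual interval is strictly shorter than the smallest interval just placed, so the defining levels $j_i^+$ form a strictly increasing sequence and the peeled intervals exhaust the right gap as $i \to \infty$. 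The symmetric construction on the left gap produces $\mathcal{F}^{-i}(Q)$ and the increasing sequence $j_i^-$. Property A (that the union over all $k \in \mathbb{Z}$ is a partition of $Q$) then follows because the cores, right peels, and left peels are pairwise disjoint and together cover $Q$ up to the single accumulation point at each end, which has measure zero. Property B holds by construction, and property D follows at each step from the bound $\#\mathcal{F}_1 \leq 2/\Crr{menor}$ in Lemma \ref{compi}.A applied to the residual interval.

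The growth estimate C is where the quasisymmetry hypothesis does the real work, and I expect it to be the main obstacle. The point is to control $|S|/|W|$ when $S \in \mathcal{F}^{i+r}(Q)$ sits $r$ generations deeper in the peeling than $W \in \mathcal{F}^i(Q)$. The natural route is to use Lemma \ref{compi}.B: both $W$ and $S$ are comparable to the length of the residual interval at their respective stages (up to the factor $\Crr{th}$), and since each successive residual interval is obtained from the previous one by removing a chain of at most $2/\Crr{menor}$ adjacent intervals of comparable size, quasisymmetry forces the residual length to shrink by a controlled multiplicative factor at each peeling step. Concretely, the residual interval at stage $i+1$ is one child-level finer than a boundary interval at stage $i$, and the $(\Crr{menor},\Crr{maior})$-good grid bound together with the quasisymmetry constant $\Crr{qs}$ yields a uniform ratio bound; iterating this $r$ times produces the claimed factor $\Crr{qs}^{2/\Crr{menor}+1}\Crr{maior}^r$. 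The delicate part is bookkeeping the constants so that the single prefactor $\Crr{qs}^{2/\Crr{menor}+1}$ absorbs the passage across an entire generation of adjacent siblings (via repeated application of the quasisymmetric estimate \eqref{qsc}) while the geometric factor $\Crr{maior}^r$ records the $r$ nested child-refinements; I would verify this by tracking, at each peeling step, how the level index $j$ increases and invoking \eqref{qsc} along the finite chain of neighbors guaranteed by Lemma \ref{compi}.A.
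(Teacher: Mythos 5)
Your construction is the same greedy peeling as the paper's, and A, B, D are organized the same way; the problem is that the justifications you offer for the increase of the levels, for the exhaustion in A, and for the constants in C all lean on a length comparison that is false in a quasisymmetric grid: you claim the residual gap is \emph{strictly smaller} than the intervals just placed. What is actually true --- and it is the single observation the paper's proof turns on, its inclusion (\ref{incl}) --- is a \emph{containment}, not a length bound: if $P$ is the outermost placed element at the current level $j$ and $P'\in\mathcal{P}^{j}$ is its neighbour on the gap side, then $P'$ was not placed, so $\overline{P'}$ is not contained in the previous gap, which forces $P'$ to cross the endpoint $a$ (resp.\ $b$) of $Q$; hence the \emph{entire} residual gap lies inside the single element $P'$. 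Since $|P'|$ may be as large as $\Crr{qs}\,|P|$, the gap can well be \emph{longer} than every placed interval, so your ``strictly shorter'' claims fail; and even if the lengths did decrease strictly, that alone would not give A, because a strictly decreasing sequence of lengths can converge to a positive limit. With the containment in hand everything follows at once: every grid element with closure in the gap is a proper descendant of $P'$, so the defining levels strictly increase; the gap has length at most $|P'|\leq \Crr{maior}^{\,j}\to 0$, which gives the exhaustion in A; and for C, any $S\in\mathcal{F}^{-i-r}(Q)$ is a descendant of $P'$ at least $r$ generations down, so $|S|\leq \Crr{maior}^{r}|P'|$, while chaining the quasisymmetry estimate (\ref{qsc}) from $P'$ across the at most $2/\Crr{menor}$ adjacent placed elements gives $|P'|\leq \Crr{qs}^{2/\Crr{menor}+1}|W|$ for every $W\in\mathcal{F}^{-i}(Q)$ --- exactly the stated bound.

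Your alternative route to C through Lemma \ref{compi}.B is, in itself, viable and genuinely different from the paper's argument: since each $\mathcal{F}^{\pm i}(Q)$ is by your definition the core family of the current gap, Lemma \ref{compi}.B makes every placed interval comparable to the gap's length up to the factor $\Crr{th}$, so at least the proportion $1/\Crr{th}$ of the gap is removed at each step; iterating gives $|S|\leq \Crr{th}\,(1-1/\Crr{th})^{r}|W|$, a geometric bound of the same quality, and this shrinkage also repairs A. But this route does \emph{not} produce the particular constants $\Crr{qs}^{2/\Crr{menor}+1}\Crr{maior}^{r}$ that you assert at the end; the step that produces them is precisely the containment of the gap in the neighbouring element $P'$, which your sketch gestures at (via $W_R$) but never states. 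As written, then, the proposal has a genuine gap, even though both the construction and one of your two strategies for C can be completed.
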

\begin{proof}  Let $Q=[a,b]$ and 
$$j_0=\min \{j\geq 0 \text{ s.t. there is } P \in \mathcal{P}^j \text{ satisfying }  \overline{P}\subset Q\}. $$
We define families $\mathcal{F}^k_{\star}(Q)$ with $k\in \mathbb{Z}$, in the following way. Let
$$\mathcal{F}^0(Q)= \{ P \in \mathcal{P}^{j_0} \text{ satisfying }  \overline{P}\subset Q \}$$
and 
$$[a_0,b_0]=\overline{\cup_{P\in \mathcal{F}^0(Q)} P}.$$
Let $j^+_0=j^-_0=j_0$. By induction on $i\geq 0$  define $j_i^-$ as the smallest $j >j_{i-1}^-$ such that 
$$\mathcal{F}^{-i}(Q)= \{ P \in \mathcal{P}^{j} \text{ satisfying }  \overline{P}\subset [a,a_{i-1}] \}\neq \emptyset,$$
and  $j_i^+$ as the smallest $j >j_{i-1}^+$ such that
$$\mathcal{F}^{i}(Q)= \{ P \in \mathcal{P}^{j} \text{ satisfying }  \overline{P}\subset [b_{i-1},b] \}\neq \emptyset$$
and 
$$[a_i,b_i]=(\cup_{0\leq k\leq i} \cup_{P\in \mathcal{F}^k(Q)} \overline{P})\  \bigcup \ (\cup_{0\leq k\leq i} \cup_{P\in \mathcal{F}^{-k}(Q)} \overline{P}) .$$
Note that
$$1\leq \# \mathcal{F}^k(Q) \leq \frac{2}{\Crr{menor}}$$
for every $k\in \mathbb{Z}$ and
$$\cup_{k\in \mathbb{Z}} \mathcal{F}^k(Q)$$
is a countable partition of $Q$.

Let $P\in \mathcal{F}^{-i}(Q)$  be such that $a_i\in \partial P$.  Let $P'\in \mathcal{P}^{j_i^-}$ be such that $P'\subset [0,a_i]$ and $\overline{P'}\cap P\neq \emptyset$. Since $P'\not\in  \mathcal{F}^{-i}(Q)$ we have that 
\begin{equation}\label{incl} [a,a_i]\subset P'\end{equation}
Moreover since $\mathcal{P}$ is a quasymmetric grid 
we have  $|P'|\leq \Crr{qs} |P|$ and $|P'|\leq \Crr{qs}^{2/\Crr{menor}+1} |W|$ for every $W\in  \mathcal{F}^{-i}(Q)$. In particular 
\begin{equation}\label{vv} |a_{i+1}-a_i|\leq |a-a_i|\leq \Crr{qs} |a_i-a_{i-1}|,\end{equation}
and for every $S\in \mathcal{P}^{j_i^++r}$ such that $S\subset  [a,a_i]$ we have
$$|S|\leq \Crr{maior}^r \Crr{qs}^{2/\Crr{menor}+1} |W|$$
for every $W\in  \mathcal{F}^{-i}(Q)$. 
We can use the same argument replacing $a$ by $b$  and $a_i$ by $b_i$ in the above argument. So  C. holds.
\end{proof}

\section{Exotic $\mathcal{B}^s_{p,q}$ for $p\neq q$} \label{exotics}

The goal of this section is to show how sensitive is the dependence of $\mathcal{B}^s_{p,q}$, for $p\neq q$, with respect to the grid $\mathcal{P}$. Choosing distinct grids in a familiar space as $([0,1],m)$, where $m$ is the Lebesgue measure, may   give origin to distinct Besov-ish spaces. The reader should compare this result with Bourdaud and Sickel \cite{change1} and  Bourdaud \cite{change2}

\begin{proposition}\label{inc} Let  $I=[0,1]$, $m$ is the Lebesgue measure, and $\mathcal{P}_\star$ and $\mathcal{P}_\circ$ be quasisymmetric grids  such that for every $P\in \mathcal{P}_i^k$, $i=\star,\circ$ and $k\geq 0$ we have
$$\# \{ Q\in  \mathcal{P}_i^{k+1} \ s.t. \ Q\subset P \}=2.$$
Let $h_i\colon [0,1]\rightarrow [0,1]$ be a quasisymmetric map such that $h_i(\mathcal{P}_i)=\mathcal{D}$.  The following statements are equivalent. 
\begin{itemize}
\item[A.]  We have $\mathcal{B}^s_{p,q}(\mathcal{P}_\circ) \subset  \mathcal{B}^s_{p,q}(\mathcal{P}_\star)$. 
\item[B.] We have  $\mathcal{B}^s_{p,q}(\mathcal{P}_\circ)= \mathcal{B}^s_{p,q}(\mathcal{P}_\star)$.
\item[C.]  The map  $ h_\star \circ h_\circ^{-1}$ is a bi-Lipchitz function.
\end{itemize}
\end{proposition}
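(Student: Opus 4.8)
The plan is to prove the cycle $A \Rightarrow C \Rightarrow B \Rightarrow A$. Since $B\Rightarrow A$ is immediate, the content lies in $C\Rightarrow B$ and in $A\Rightarrow C$. The first step is to recast $C$ in a usable geometric form. As $g=h_\star\circ h_\circ^{-1}$ is a composition of quasisymmetric maps it is itself a quasisymmetric homeomorphism of $[0,1]$, and it carries the combinatorial binary tree of $\mathcal{P}_\circ$ onto that of $\mathcal{P}_\star$, matching $k$-th generation intervals. Writing $P^i_k(x)$ for the element of $\mathcal{P}_i^k$ containing $x$, I would show, using Lemma \ref{compi} and the doubling property of good grids, that $g$ is bi-Lipschitz if and only if there is a constant $\Crr{th}'$ with $(\Crr{th}')^{-1}\le |P^\star_k(x)|/|P^\circ_k(x)|\le \Crr{th}'$ for every $x$ and every $k$. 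Call this pointwise condition $(C')$. The subtle point here is that $g$ matches intervals only combinatorially, at possibly distant locations; quasisymmetry together with doubling is exactly what upgrades this combinatorial comparability to the spatial comparability $(C')$, and conversely.

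For $C\Rightarrow B$, assume $(C')$. For each $P\in\mathcal{P}_\circ^k$ I would decompose $1_P$, and hence its Souza atom $a_P$, into $\mathcal{P}_\star$-atoms by applying Proposition \ref{fam} to the interval $P$ relative to the grid $\mathcal{P}_\star$: the interval splits into a bounded bulk family together with boundary layers whose $\mathcal{P}_\star$-levels spread from the starting level $k_0^\star(P)$ while the total $\ell^p$-mass of the coefficients decays geometrically, by Proposition \ref{fam}.C. Condition $(C')$ forces $k_0^\star(P)=k+O(1)$ uniformly in $P$. This is precisely the hypothesis needed to invoke the atomic transfer result Proposition \ref{besov-trans}.C of \cite{smania-besov} with $k_i=i$, exactly as in Propositions \ref{besovhom} and \ref{iii}, giving the continuous inclusion $\mathcal{B}^s_{p,q}(\mathcal{P}_\circ)\subset\mathcal{B}^s_{p,q}(\mathcal{P}_\star)$. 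Since $(C')$ is symmetric in $\star$ and $\circ$, exchanging their roles yields the reverse inclusion and equivalence of norms, which is $B$.

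The heart of the matter is $A\Rightarrow C$, which I would prove by contraposition. Both spaces are Banach and the inclusion in $A$ is the identity on functions, so by the closed graph theorem it is bounded: $|f|_{\mathcal{B}^s_{p,q}(\mathcal{P}_\star)}\le C|f|_{\mathcal{B}^s_{p,q}(\mathcal{P}_\circ)}$. If $(C')$ fails, the ratio $|P^\star_k(x)|/|P^\circ_k(x)|$ is unbounded (above or below) along some sequence of points and scales, equivalently the reindexing $n\mapsto k_0^\star(P^\circ_n(x))$ along a branch shrinking to a point $x$ is not comparable to the identity. I would then violate boundedness by stacking Souza atoms along such a branch, $f=\sum_n c_n a_{P^\circ_n(x)}$, choosing coefficients so that $|f|_{\mathcal{B}^s_{p,q}(\mathcal{P}_\circ)}$ is finite, being dominated by $\|(c_n)\|_{\ell^q}$ (one atom per $\circ$-level), while a lower bound for $|f|_{\mathcal{B}^s_{p,q}(\mathcal{P}_\star)}$ diverges. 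For the lower bound I would use the representation-independent oscillation norm of Theorem \ref{besov-alte} in \cite{smania-besov}, together with the bounded-overlap combinatorics of the branch furnished by Proposition \ref{fam}.C and Lemma \ref{compi}.B to pin down which $\mathcal{P}_\star$-intervals actually carry the oscillation of $f$.

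The main obstacle is precisely this quantitative lower bound. One must see that when the reindexing stalls, collapsing many $\circ$-levels onto few $\star$-levels, or jumps, the aggregation of the $c_n$ in $\ell^p$ within a $\star$-level followed by $\ell^q$ across levels overshoots $\|(c_n)\|_{\ell^q}$, and can be forced to be infinite while keeping $\|(c_n)\|_{\ell^q}$ finite. This is exactly where the hypothesis $p\neq q$ is indispensable: for $p=q$ both norms collapse to a single $\ell^p$ sum, insensitive to reindexings of bounded multiplicity, and no such counterexample can exist, consistent with the \emph{exotic} phenomenon being particular to $p\neq q$. The delicate accounting I expect to be most demanding is that the single inclusion $A$ must be defeated by \emph{both} directions of degeneration of the ratio, so that $A$ alone forces the two-sided comparability $(C')$; this should require selecting the branch and the coefficients $c_n$ according to which way the distortion blows up, and carefully matching the exponents $s$, $p$, $q$ in the oscillation estimate.
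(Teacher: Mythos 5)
Your overall architecture ($A\Rightarrow C\Rightarrow B\Rightarrow A$) matches the paper's, and your $C\Rightarrow B$ step is essentially the paper's Claim 4 (decompose each $\circ$-atom via Proposition \ref{fam} and transfer with Proposition \ref{besov-trans}.C); that part is sound. The genuine gap is in $A\Rightarrow C$: a counterexample supported on a single branch can never work. Along a branch $P_0^\circ(x)\supset P_1^\circ(x)\supset\cdots\ni x$, the reindexing $n\mapsto j_n:=j_0^\star(P_n^\circ(x))$ automatically has \emph{bounded multiplicity}: if $j_{N_1}=j_{N_2}=j$ with $N_1<N_2$, then by Lemma \ref{compi} and quasisymmetry every level-$j$ interval of $\mathcal{P}_\star$ contained in $P_{N_1}^\circ(x)$ has length comparable to $|P_{N_1}^\circ(x)|$ (these intervals form a chain of boundedly many adjacent intervals), while the level-$j$ interval witnessing $j_{N_2}=j$ lies inside $P_{N_2}^\circ(x)$ and hence has length at most $\lambda^{N_2-N_1}|P_{N_1}^\circ(x)|$; thus $N_2-N_1$ is bounded by a constant of the grids. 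Consequently, for $f=\sum_n c_n a_{P_n^\circ(x)}$ with $(c_n)\in\ell^q$, redistributing one coefficient per $\circ$-level onto $\star$-levels with multiplicity at most $M$ (plus the geometrically decaying tails coming from Proposition \ref{fam}.C) gives
$$\Big(\sum_j\big(\sum_{n\,:\,j_n=j}|c_n|^p\big)^{q/p}\Big)^{1/q}\le M^{1/p}\Big(\sum_n |c_n|^q\Big)^{1/q},$$
so $f$ lies in $\mathcal{B}^s_{p,q}(\mathcal{P}_\star)$ for \emph{every} admissible choice of coefficients: no choice of branch and of $(c_n)$ can defeat the inclusion $A$. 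The pointwise ratio $|P_k^\star(x)|/|P_k^\circ(x)|$ can indeed blow up along a branch (e.g. when $j_n\sim 2n$), but such vertical degeneration, having bounded multiplicity, is invisible to the mixed $\ell^q(\ell^p)$ norms.

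What the implication actually requires is \emph{horizontal} degeneration, and this is how the paper proceeds. Its intermediate condition is not your pointwise $(C')$ but the counting condition $\sup_k\#\{j_0^\star(Q)\colon Q\in\mathcal{P}_\circ^k\}<\infty$; the step ``counting bounded $\Rightarrow$ bi-Lipschitz'' (the paper's Claim 3) uses the hypothesis that every interval has exactly two children, via counting $2^k$ intervals per generation, a point your sketch glosses over. When the counting condition fails one finds, for each $n$, pairwise \emph{disjoint} intervals $Q_1^n,\dots,Q_{r_n}^n$ all in one level $\mathcal{P}_\circ^{v_n}$ whose starting levels $j_0^\star(Q_i^n)$ are pairwise far apart. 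The paper's counterexample (Claim 5, case $p>q$) places Haar-type functions $\phi_{S_i^n}$ on these intervals with coefficients proportional to $2^{-n}i^{-1/q}$: in the $\circ$-grid all $r_n$ coefficients sit in one level and aggregate in $\ell^p$, giving $\sum_i i^{-p/q}<\infty$ (this is exactly where $p>q$ enters), while in the $\star$-grid they sit in distinct levels and aggregate in $\ell^q$, giving $\sum_{i\le r_n}1/i$, forced to diverge by choosing $r_n$ with $\sum_{i\le r_n}1/i>2^{nq}$. For $p<q$ the construction must be mirrored, producing a counterexample in $\mathcal{B}^s_{p,q}(\mathcal{P}_\star)\setminus\mathcal{B}^s_{p,q}(\mathcal{P}_\circ)$ (Claim 6, applied after exchanging the roles of the grids); this is how a single assumed inclusion still forces two-sided comparability. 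Finally, your appeal to the closed graph theorem is unnecessary: the argument only ever uses set-theoretic membership, never boundedness of the inclusion map.
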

\begin{proof} Without loss of generality we assume that $\mathcal{P}_\star$ and $\mathcal{P}_\circ$ are $(\Crr{menor},\Crr{maior})$-good grids. For an interval $Q\subset [0,1]$ define
$$j_0^\star(Q)=\min \{j\geq 0 \text{ s.t. there is } P \in \mathcal{P}^j_\star \text{ satisfying }  \overline{P}\subset Q\}. $$
$$j_0^\circ(Q)=\min \{j\geq 0 \text{ s.t. there is } P \in \mathcal{P}^j_\circ \text{ satisfying }  \overline{P}\subset Q\}. $$
$$j_0^D(Q)=\min \{j\geq 0 \text{ s.t. there is } P \in \mathcal{D}^j_\circ \text{ satisfying }  \overline{P}\subset Q\}. $$
We claim that

\noindent {\it Claim 1.} There is $\Cll{cj}\geq 1$ such that for every $Q\in \mathcal{P}_\circ^k$ and $P\in \mathcal{P}_\star^{j_0^\star(Q)}$ satisfying $P\subset Q$ we have $k\leq j_0^\circ(P)\leq k+\Crr{cj}$. \\

\noindent and\\

\noindent {\it Claim 2.} There is $\Cll{step}\geq 0$ such that for every $Q_1,Q_2 \in \mathcal{P}_\circ^k$, with $k\in \mathbb{N}$, and  satisfying  $\overline{Q}_1\cap  \overline{Q}_2\neq \emptyset$ we have 
$$|j_0^\star(Q_1)-j_0^\star(Q_2)|\leq \Crr{step}.$$

\noindent Both Claims 1 and 2 follows easily from Lemma \ref{compi}. \\

\noindent {\it Claim 3.} Suppose that 
\begin{equation} \label{bou} \Cll{wee}=\sup_k \# \{j^\star_0(Q), Q\in \mathcal{P}_\circ^k     \}< \infty.\end{equation}
Then $ h_\star^{-1}\circ h_\circ$ is a bi-Lipchitz function.\\

Indeed if 
$$M_k= \max  \{j^\star_0(Q), Q\in \mathcal{P}_\circ^k     \} \ and \  m_k=\min  \{j^\star_0(Q), Q\in \mathcal{P}_\circ^k     \}$$
then Claim 2. implies that
$$\sup_k (M_k-m_k) \leq \Crr{wee}(\Crr{step}+1).$$
For every $Q\in \mathcal{P}_\circ$ choose $P_Q\in \mathcal{P}^{j_\star(Q)}$ such that $P_Q\subset Q$. Of course 
$$\#\{P_Q, \ Q\in \mathcal{P}_\circ^k\}=2^k$$
and
$$\{P_Q, \ Q\in \mathcal{P}_\circ^k\} \subset \cup_{m_k\leq j\leq M_k} \mathcal{P}_\star^{j},$$
so
$$2^k\leq \sum_{m_k\leq j\leq M_k} 2^j \leq 2^{M_k+1}\leq 2^{\Crr{step}+1}2^{m_k}.$$
By Lemma \ref{compi}.A each $Q\in \mathcal{P}_\circ^k$ intersects at most $1/\Crr{menor}+2$ intervals in $\mathcal{P}_\star^{j_\star(Q)}$, so $Q$  intersects at most $2^{\Crr{step}}(1/\Crr{menor}+2)$ intervals in $\mathcal{P}_\star^{M_k}$. Since $\mathcal{P}_\circ^k$ covers $[0,1]$ we have
$$\mathcal{P}_\star^{M_k} =\bigcup_{Q\in \mathcal{P}_\circ^k} \{ P\in \mathcal{P}_\star^{M_k}\\ s.t. \  P\cap Q\neq \emptyset \},$$
so
$$2^{m_k}\leq 2^{M_k}\leq  2^{\Crr{step}}(1/\Crr{menor}+2)2^k, $$
so there are constants $A, B$ such that
$$ k+A\leq m_k\leq k +B.$$
Note that for every interval $W\subset [0,1]$ with $k=j_0^D(W)$ we have
$$2^{-k}\leq     |W|\leq 2^{-k+1},$$
so  $k=j_0^\circ(h_\circ^{-1}W))$ and consequently  $j_0^D(h_\star\circ h_\circ^{-1}(W))=j_0^\star(h_\circ^{-1}(W))\in[m_k,M_k].$  We conclude that 
$$ \frac{1}{\Cll{su}} |W|   \leq  2^{-k-B+\Crr{wee}(\Crr{step}+1)}\leq 2^{-M_k}  \leq |h_\star\circ h_\circ^{-1}(W)|\leq 2^{-m_k+1} \leq 2^{-k-A+1}\leq \Crr{su}  |W|,$$
so $h_\star\circ h_\circ^{-1}$ is bi-Lipchitz. This proves Claim 3.\\

\noindent {\it Claim 4.} If $h_\star\circ h_\circ^{-1}$ is a bi-Lipchitz function then  $\mathcal{B}^s_{p,q}(\mathcal{P}_\circ)= \mathcal{B}^s_{p,q}(\mathcal{P}_\star)$.\\

Let  $Q\in \mathcal{P}^i_\circ$. Then 
$$2^{-i}\leq |h_\circ(Q)|\leq 2^{-i+1}$$
and consequently
$$\frac{1}{C} 2^{-i}\leq |h_\star(Q)|\leq C 2^{-i+1},$$
so
$$i+A\leq    j_0^\star(Q)\leq i +B$$
for some $A, B$ that  does not depend on $i$. Let $k_i=i+B.$ Consider Proposition \ref{fam} taking $\mathcal{P}=\mathcal{P}_\star$ and let 
$$\mathcal{G}^k(Q)= \{ P\in \mathcal{F}^{\ell}(Q), \ \ell\geq 0, j^+_\ell=k\}\cup  \{ P\in \mathcal{F}^{\ell}(Q), \ \ell<  0,  j^-_{|\ell|}=k\}.$$
Then by Lemma \ref{compi}.B  we have
$$a_Q= \sum_{k \geq j_0^\star(Q)}  \sum_{P\in \mathcal{G}^k(Q)}    \Big(\frac{|P|}{|Q|}\Big)^{1/p-s} a_P.$$
with 
$$\sum_{\mathcal{G}^k(Q)}    \Big(\frac{|P|}{|Q|}\Big)^{1-sp}\leq   \frac{4 \Crr{th}^{1-sp} }{\Crr{menor}}  \Crr{maior}^{(k-k_i)(1-sp)}.$$
By Proposition \ref{besov-trans}.C in \cite{smania-besov} we have $\mathcal{B}^s_{p,q}(\mathcal{P}_\circ)\subset \mathcal{B}^s_{p,q}(\mathcal{P}_\star)$. The proof of the reverse inclusion is similar.\\

\noindent {\it Claim 5.} Suppose that $p> q$ and 
$$\sup_k \# \{j^\star_0(Q), Q\in \mathcal{P}_\circ^k     \}=+\infty.$$
Then  there is  $f\in \mathcal{B}^s_{p,q}(\mathcal{P}_\circ) \setminus \mathcal{B}^s_{p,q}(\mathcal{P}_\star)$.\\ \\ 

\noindent {\it Step I.} Indeed this implies that  for every $n\in \mathbb{N}$  we can find $r_n, v_n \in \mathbb{N}$ and families with $r_n$ elements  $\{Q^n_1,\dots, Q^n_{r_n}\}\subset \mathcal{P}_\circ^{v_n}$, with $\lim_n r_n=+\infty$ and $$|j^\star_0(Q^n_i)-j^\star_0(Q^n_\ell)|\geq 3\Crr{cj}$$ for every $i\neq \ell$.  Taking a subsequence we can also assume that $v_n$ is increasing, $v_n\geq r_n$,  $$j^\star_0(Q^n_i) + 3\Crr{cj}<  j^\star_0(Q^m_\ell)$$ for every $n<  m$, and 
\begin{equation}\label{ccco1} \sum_{1\leq i\leq r_n}\frac{1}{i}  > 2^{nq}.\end{equation}
For every $n$ and $1\leq i\leq r_n$ choose  $P^n_i\in \mathcal{P}_\star^{j_0^\star(Q^n_i)}$ such that $P^n_i\subset Q^n_i$. Note that Claim 1 implies 
$v_n\leq j_0^\circ(P^n_i)\leq v_n+\Crr{cj}.$\\

\noindent {\it Step II.} For every $n$ and $i\leq r_n$ let $A^n_i,B^n_i$ be the only children of $P^n_i$. Define $S^n_i=(A^n_i,B^n_i)$ and let $\phi_{S^n_i}$ be  the corresponding element of the {\it unconditional basis}  in $L^p$ (note that in Claim 5 we have  $p > 1$) defined in Section \ref{besov-haar} in \cite{smania-besov} using the grid $\mathcal{P}_\star$.  Then
\begin{equation}\label{order}f= \sum_n \sum_{i\leq r_n} \frac{1}{2^ni^{1/q}|P^n_i|^{1/p-s-1/2}}\phi_{S^n_i}.
\end{equation}
is the Haar representation of a function in $L^p$. Indeed we have  $P^n_i\cap P^n_j=\emptyset$ for $i\neq j$, so 
\begin{eqnarray*}
&& |\sum_{i\leq r_n} \frac{1}{2^ni^{1/q}|P^n_i|^{1/p-s-1/2}}\phi_{S^n_i}|_p^p\\
&=& \frac{1}{2^{np}} \sum_{i\leq r_n} | \frac{1}{i^{1/q}|P^n_i|^{1/p-s-1/2}}\phi_{S^n_i}|_p^p\\\
&=&\frac{1}{2^{np}} \sum_{i\leq r_n} \frac{|P^n_i|^{sp}}{i^{p/q}}\leq \frac{1}{2^{np}} \sum_{i=1}^\infty \frac{1}{i^{p/q}}\leq \frac{\Cll{iii}}{2^{np}}.
\end{eqnarray*}
So
\begin{eqnarray*}
&&|\sum_j \sum_{j_0^\star(Q^n_i)=j}     \frac{1}{2^ni^{1/q}|P^n_i|^{1/p-s-1/2}}\phi_{S^n_i}|_p \\&=&  |\sum_n \sum_{i\leq r_n} \frac{1}{2^ni^{1/q}|P^n_i|^{1/p-s-1/2}}\phi_{S^n_i}|_p\\
&\leq& 2\Crr{iii}^{1/p}.
\end{eqnarray*}
We conclude that  $f\in L^p$, with $p > q\geq 1$.\\
 
 \noindent {\it Step III.} Since, the ordering of the sum  in (\ref{order}) does not matter, so
 
 $$f= \sum_{j} \sum_{j_0^\star(Q^n_i)=j}   \frac{1}{2^ni^{1/q}|P^n_i|^{1/p-s-1/2}}\phi_{S^n_i}.$$
 
Since  $j_0^\star(P^n_i)\neq j_0^\star(P^m_\ell)$ provided $(n,i)\neq (m,\ell)$ we have 
\begin{eqnarray*}
 N_{haar}^\star(f)&=&\Big( \sum_{j\geq 1}  \big(  \sum_{P^n_i\in \mathcal{P}^{j}_\star}    \frac{1}{2^{np}i^{p/q}} \big)^{q/p} \Big)^{1/q},\\
 &=&\Big( \sum_{n}\sum_{i \leq r_n}    \frac{1}{2^{nq}i} \Big)^{1/q}\\
&=&\Big( \sum_{n}  \frac{1}{2^{nq}} \sum_{i \leq r_n}    \frac{1}{i} \Big)^{1/q}=+\infty,
 \end{eqnarray*}
 so $f \notin \mathcal{B}^s_{p,q}(\mathcal{P}_\star)$. \\
 
\noindent {\it Step IV.} On the other hand, note that 
 $$f= \sum_{j} \sum_{j_0^\circ(Q^n_i)=j}  c_{Q^n_i} h_{Q^n_i}= \sum_{n} \sum_{i\leq r_n}  c_{Q^n_i} h_{Q^n_i} ,$$  
 where
 $$ c_{Q^n_i}= \frac{1}{2^ni^{1/q}}$$
 and
 $$h_{Q^n_i}= t^n_i \frac{1_{A^n_i}}{|A^n_i|^{1/p-s}} - z^n_i \frac{1_{B^n_i}}{|B^n_i|^{1/p-s}},$$
 with 
\begin{equation}\label{tn} t^n_i= \frac{1}{|P^n_i|^{1/p-s-1/2}}  \Big(\frac{1}{|A^n_i|}+ \frac{1}{|B^n_i|}\Big)^{-1/2} |A^n_i|^{1/p-s-1}\leq \Cll{pppu},\end{equation}
\begin{equation}\label{zn}z^n_i= \frac{1}{|P^n_i|^{1/p-s-1/2}}  \Big(\frac{1}{|A^n_i|}+ \frac{1}{|B^n_i|}\Big)^{-1/2} |B^n_i|^{1/p-s-1}\leq \Crr{pppu}.\end{equation}
Here $\Crr{pppu}$ depends only on the geometry of $\mathcal{P}_\star$,  $s$ and $p$. 
We have
\begin{eqnarray*} \Big( \sum_j \big(   \sum_{j_0^\circ(Q^n_i)=j} c_{Q^n_i}^p \big)^{q/p}  \Big)^{1/q}&=&\Big( \sum_n  \frac{1}{2^{nq}} \big(   \sum_{i\leq  r_n }   \frac{1}{i^{p/q}}\big)^{q/p}  \Big)^{1/q}\\
&=&\big( \sum_n  \frac{1}{2^{nq}} \big)^{1/q}  \big(   \sum_{i}   \frac{1}{i^{p/q}}\big)^{1/p} < \infty.
\end{eqnarray*} 
For every  $k\in \mathbb{N}$ define
$$\mathcal{Y}^k(Q^n_i)=\mathcal{P}^k_\circ \cap \bigcup_\ell \big( \mathcal{F}^\ell(A^n_i)\cup \mathcal{F}^{-\ell}(A^n_i)\cup \mathcal{F}^\ell(B^n_i)\cup \mathcal{F}^{-\ell}(B^n_i)\big),$$
where the families $\mathcal{F}^\ell$ are as defined in Proposition \ref{fam} taking $\mathcal{P}=\mathcal{P}_\circ$. Of course $\mathcal{Y}^k(Q^n_i)=\emptyset$ for $k< v_n$, and for every $P\in \mathcal{Y}^k(Q^n_i)$ we have $P\subset Q^n_i$, and consequently 
$$ |P|\leq   \Crr{maior}^{k-v_n}|Q^n_i|   \leq \Crr{th} \Crr{menor}^{-\Crr{cj}}\Crr{maior}^{k-v_n} \min\{ |A^n_i|, |B^n_i|  \}.$$ 
Moreover by Proposition \ref{fam}.D
$$\#\mathcal{Y}^k(Q^n_i)\leq \frac{8}{\Crr{menor}}.$$
Then
\begin{eqnarray*}
h_{Q^n_i}&=& t^n_i \frac{1_{A^n_i}}{|A^n_i|^{1/p-s}} - z^n_i \frac{1_{B^n_i}}{|B^n_i|^{1/p-s}}\\
&&=\sum_{k} \ \  \Big[ \sum_{\substack{P\in \mathcal{Y}^k(Q^n_i)\\ P\subset A^n_i} } t^n_i \Big( \frac{|P|}{|A^n_i|}\Big)^{1/p-s} a_P   \Big] - \Big[ \sum_{\substack{P\in \mathcal{Y}^k(Q^n_i) \\P\subset B^n_i}} z^n_i \Big( \frac{|P|}{|B^n_i|}\Big)^{1/p-s} a_P   \Big] \\
&&=\sum_{k} \ \   \sum_{P\in \mathcal{Y}^k(Q^n_i)} s_{P,Q^n_i} a_P   \\
\end{eqnarray*} 
Here $a_W$ is the canonical Souza's atom on $W$ and 
$$s_{P,Q^n_i}=  t^n_i \Big( \frac{|P\cap A^n_i|}{|A^n_i|}\Big)^{1/p-s}  - z^n_i \Big( \frac{|P\cap B^n_i|}{|B^n_i|}\Big)^{1/p-s}.$$
Consequently 
\begin{eqnarray*} &&\sum_{P\in \mathcal{Y}^k(Q^n_i)} |s_{P,Q^n_i}|^p \\
&\leq&2^{p} \Big[ \sum_{\substack{P\in \mathcal{Y}^k(Q^n_i)\\ P\subset A^n_i} }  (t^n_i )^p \Big( \frac{|P|}{|A^n_i|}\Big)^{1-sp}    \Big] +2^p \Big[ \sum_{\substack{P\in \mathcal{Y}^k(Q^n_i) \\P\subset B^n_i}}  (z^n_i)^p \Big( \frac{|P|}{|B^n_i|}\Big)^{1-sp}    \Big]\\
&\leq& \Cll{pppp} \Crr{maior}^{(k-v_n)(1-sp)}.
\end{eqnarray*} 
So by Proposition \ref{besov-trans}.A  in \cite{smania-besov}(take $\mathcal{G}=\mathcal{W}=\mathcal{P}_\circ$, $A=B=0$ and $k_i=i$) we have that $f \in \mathcal{B}^s_{p,q}(\mathcal{P}_\circ)$. This completes the proof of Claim 5. \\ \\

\noindent {\it Claim 6.} Suppose that $q> p$ and 
$$\sup_k \# \{j^\star_0(Q), Q\in \mathcal{P}_\circ^k     \}=+\infty.$$
Then  there is  $f\in \mathcal{B}^s_{p,q}(\mathcal{P}_\star)\setminus \mathcal{B}^s_{p,q}(\mathcal{P}_\circ)$.\\ \\ 

The  proof of Claim 6. has many similarities with  the proof of Claim 5., however there are  modifications that may not seems obvious to the reader. We describe them below. \\ \\ 
\noindent {\it Step I.} Consider families   $\{Q^n_1,\dots, Q^n_{r_n}\}\subset \mathcal{P}_\circ^{v_n}$  as in the Step I. in the proof of  Claim 5,  {\it except}  that we replace condition (\ref{ccco1}) by
\begin{equation}\label{cooo} \sum_{1 \leq i\leq r_n}\frac{1}{i} > 2^{np}.\end{equation} 
and we also demand  $v_{n+1}> v_n+3\Crr{cj}+2 $. For every $n$ and $1\leq i\leq r_n$ choose  $P^n_i\in \mathcal{P}_\star^{j_0^\star(Q^n_i)}$ such that $P^n_i\subset Q^n_i$, and also 
$\hat{Q}^n_i\subset P^n_i$ satisfying $\hat{Q}^n_i\in \mathcal{P}_\circ^{j_0^\circ(P^n_i)}$. We have
$v_n\leq j_0^\circ(P^n_i)\leq v_n+\Crr{cj}.$\\
 Let $A^n_i,B^n_i \in \mathcal{P}_\circ^{j_0^\circ(P^n_i)+1}$ be the only children of $\hat{Q}^n_i$. Define $S^n_i=(A^n_i,B^n_i)$ and let $\phi_{S^n_i}$ be  the corresponding element of the unconditional basis of $L^{t}$, for every $1 < t < \infty$, defined in Section \ref{besov-haar} in \cite{smania-besov} using the grid $\mathcal{P}_\circ$.

 \noindent {\it Step II.} Define
\begin{equation}\label{order2} f=\sum_{j} \sum_{j_0^\star(P^n_i)=j} \frac{1}{2^ni^{1/p}|\hat{Q}^n_i|^{1/p-s-1/2}}\phi_{S^n_i}\end{equation}
We claim that $f$ is well defined and it belongs to $\mathcal{B}^s_{p,q}(\mathcal{P}_\star)$. Indeed, we can write
 $$f= \sum_{j} \sum_{j_0^\star(P^n_i)=j}  c_{P^n_i} h_{P^n_i},$$  
 where
 $$ c_{P^n_i}= \frac{1}{2^ni^{1/p}}$$
 and
 $$h_{P^n_i}= t^n_i \frac{1_{A^n_i}}{|A^n_i|^{1/p-s}} - z^n_i \frac{1_{B^n_i}}{|B^n_i|^{1/p-s}},$$
 with 
$$t^n_i= \frac{1}{|\hat{Q}^n_i|^{1/p-s-1/2}}  \Big(\frac{1}{|A^n_i|}+ \frac{1}{|B^n_i|}\Big)^{-1/2} |A^n_i|^{1/p-s-1}\leq \Cll{pppuu},$$
$$z^n_i= \frac{1}{|\hat{Q}^n_i|^{1/p-s-1/2}}  \Big(\frac{1}{|A^n_i|}+ \frac{1}{|B^n_i|}\Big)^{-1/2} |B^n_i|^{1/p-s-1}\leq \Crr{pppuu}.$$
So 
\begin{eqnarray*} \Big( \sum_j \big(   \sum_{j_0^\star(P^n_i)=j} c_{\hat{Q}^n_i}^p \big)^{q/p}  \Big)^{1/q}&=&\Big( \sum_n  \sum_{i\leq r_n} \frac{1}{2^{nq} i^{q/p}}   \Big)^{1/q}\\
&=&\big( \sum_n  \frac{1}{2^{nq}} \big)^{1/q}  \big(   \sum_{i}   \frac{1}{i^{q/p}}\big)^{1/q} < \infty.
\end{eqnarray*} 
For every  $k\in \mathbb{N}$ define
$$\mathcal{Y}^k(P^n_i)=\mathcal{P}^k_\star \cap \bigcup_\ell \mathcal{F}^\ell(A^n_i)\cup \mathcal{F}^{-\ell}(A^n_i)\cup \mathcal{F}^\ell(B^n_i)\cup \mathcal{F}^{-\ell}(B^n_i),$$
where the families $\mathcal{F}^\ell$ are as defined in Proposition \ref{fam} taking $\mathcal{P}=\mathcal{P}_\star$. Of course $\mathcal{Y}^k(P^n_i)=\emptyset$ for $k< j^\star_0(\hat{Q}^n_i)\leq  j^\star_0(P^n_i)$, and for every $P\in \mathcal{Y}^k(P^n_i)$ we have $P\subset \hat{Q}^n_i\subset P^n_i$, and consequently due Proposition \ref{compi}.B 
$$ |P|\leq   \Crr{maior}^{k-j^\star_0(P^n_i)}|P^n_i| \leq \Crr{th} \Crr{maior}^{k-j^\star_0(P^n_i)}  |\hat{Q}^n_i|   \leq \Crr{th} \Crr{menor}^{-1}\Crr{maior}^{k-j^\star_0(P^n_i)} \min\{ |A^n_i|, |B^n_i|  \}.$$ 
Moreover
$$\#\mathcal{Y}^k(P^n_i)\leq \frac{8}{\Crr{menor}}.$$
Then
\begin{eqnarray*}
h_{P^n_i}&=& t^n_i \frac{1_{A^n_i}}{|A^n_i|^{1/p-s}} - z^n_i \frac{1_{B^n_i}}{|B^n_i|^{1/p-s}}\\
&&=\sum_{k\in \mathbb{N}} \ \  \Big[ \sum_{\substack{P\in \mathcal{Y}^k(P^n_i)\\ P\subset A^n_i} } t^n_i \Big( \frac{|P|}{|A^n_i|}\Big)^{1/p-s} a_P   \Big] - \Big[ \sum_{\substack{P\in \mathcal{Y}^k(P^n_i) \\P\subset B^n_i}} z^n_i \Big( \frac{|P|}{|B^n_i|}\Big)^{1/p-s} a_P   \Big] \\
&&=\sum_{k\in \mathbb{N}} \ \   \sum_{P\in \mathcal{Y}^k(P^n_i)} s_{P,P^n_i} a_P   \\
\end{eqnarray*} 
Here $a_W$ is the canonical Souza's atom on $W$ and 
$$s_{P,P^n_i}=  t^n_i \Big( \frac{|P\cap A^n_i|}{|A^n_i|}\Big)^{1/p-s}  - z^n_i \Big( \frac{|P\cap B^n_i|}{|B^n_i|}\Big)^{1/p-s}.$$
Consequently 
\begin{eqnarray*} &&\sum_{P\in \mathcal{Y}^k(P^n_i)} |s_{P,P^n_i}|^p \\
&\leq&2^{p} \Big[ \sum_{\substack{P\in \mathcal{Y}^k(P^n_i)\\ P\subset A^n_i} }  (t^n_i )^p \Big( \frac{|P|}{|A^n_i|}\Big)^{1-sp}    \Big] +2^p \Big[ \sum_{\substack{P\in \mathcal{Y}^k(P^n_i) \\P\subset B^n_i}}  (z^n_i)^p \Big( \frac{|P|}{|B^n_i|}\Big)^{1-sp}    \Big]\\
&\leq& \Cll{pppw} \Crr{maior}^{(k-j^\star_0(P^n_i))(1-sp)}.
\end{eqnarray*} 
So by Proposition \ref{besov-trans}.A  in \cite{smania-besov} (take $\mathcal{G}=\mathcal{W}=\mathcal{P}_\star$, $A=B=0$ and $k_i=i$) we have that $f \in \mathcal{B}^s_{p,q}(\mathcal{P}_\star)$. In particular $f\in L^t$, for some  $t > 1$. \\

\noindent {\it Step III.} In particular $f\in L^t$, for some  $t > 1$. Since $\{\phi_{S^n_i}\}$ belongs to a unconditional basis of $L^t$ we can write 
$$f=\sum_{j} \sum_{j_0^\circ(\hat{Q}^n_i)=j} \frac{1}{2^ni^{1/p}|\hat{Q}^n_i|^{1/p-s-1/2}}\phi_{S^n_i}$$
Since $v_{n+1}> v_n+3\Crr{cj}+2 $,  
$v_n\leq j_0^\circ(P^n_i)\leq v_n+\Crr{cj}$ and  $\hat{Q}^n_i\neq \hat{Q}^n_\ell$ for $i\neq \ell$  we conclude that 
$$\# \{ \hat{Q}^n_i \colon v_n\leq j^\circ_0(\hat{Q}^n_i)\leq v_n+\Crr{cj}\}=r_n,$$
so there is at least one $j\in [v_n,v_n+\Crr{cj}]$ such that 
$$ \sum_{\hat{Q}^n_i\in \mathcal{P}^{j}_\circ}     \frac{1}{i}\geq  \frac{2^{np}}{\Crr{cj}+1}.$$
We conclude that 
\begin{eqnarray*}
 N_{haar}^\circ (f)&=&\Big( \sum_{j\geq 1}  \big(  \sum_{\hat{Q}^n_i\in \mathcal{P}^{j}_\circ}    \frac{1}{2^{np}i } \big)^{q/p} \Big)^{1/q},\\
 &=&\Big( \sum_{n} \frac{1}{2^{nq}}  \sum_{v_n\leq j\leq v_n+\Crr{cj}} \Big( \sum_{\hat{Q}^n_i\in \mathcal{P}^{j}_\circ}     \frac{1}{i}\Big)^{q/p}  \Big)^{1/q}\\
 &=&\Big( \sum_{n} \frac{1}{2^{nq}}  \sum_{v_n\leq j\leq v_n+\Crr{cj}} \Big( \sum_{\hat{Q}^n_i\in \mathcal{P}^{j}_\circ}     \frac{1}{i}\Big)^{q/p}  \Big)^{1/q}=\infty,
 \end{eqnarray*}
 so $f \notin \mathcal{B}^s_{p,q}(\mathcal{P}_\circ)$. \\

This completes the proof of Claim 6. \\ \\

To complete the proof of Proposition \ref{inc}, note that Claim 4. tell us that $C\Rightarrow A$ and $C\Rightarrow B$. Suppose that $A$ holds, that is $\mathcal{B}^s_{p,q}(\mathcal{P}_\circ) \subset  \mathcal{B}^s_{p,q}(\mathcal{P}_\star)$.  If $q< p$  then Claim 5. implies that (\ref{bou}) holds. So  by Claim 3. we have that $ h_\circ^{-1}\circ h_\star$ is a bi-Lipchitz function and Claim 4. gives $\mathcal{B}^s_{p,q}(\mathcal{P}_\circ)=\mathcal{B}^s_{p,q}(\mathcal{P}_\star)$. On the other hand if $p< q$  then  Claim 6  (exchanging the roles of $\mathcal{P}_\circ$ and $\mathcal{P}_\star$ )  implies that
$$\sup_k \# \{j^\circ_0(Q), Q\in \mathcal{P}_\star^k     \}< +\infty,$$
so again  by Claim 3. we have that $ h_\circ^{-1}\circ h_\star$ is a bi-Lipchitz function and Claim 4. gives $\mathcal{B}^s_{p,q}(\mathcal{P}_\circ)=\mathcal{B}^s_{p,q}(\mathcal{P}_\star)$. We conclude that $A\Rightarrow B$ and $A\Rightarrow C$. Obviously $B\Rightarrow A$, so this concludes the proof. 
 \end{proof}

\begin{proposition}[Exotic Besov-ish spaces]  Let $\mathbb{B}^s_{p,q}$ be the Besov space of $[0,1]$ as defined in Section \ref{beshom}.  There are quasisymmetric grids $\mathcal{P}_\star$ such that $\mathcal{B}^s_{p,q}(\mathcal{P}_\star)\neq \mathbb{B}^s_{p,q}$. 
\end{proposition}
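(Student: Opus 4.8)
The plan is to reduce the statement to Proposition \ref{inc} by comparing the dyadic grid with a quasisymmetric distortion of it that is \emph{not} bi-Lipschitz equivalent to it. Throughout I take $p\neq q$, the regime in which Proposition \ref{inc} is proved and the one announced in the title of this section, together with $0<s<1/p$ so that $\mathbb{B}^s_{p,q}$ is defined.

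First I would fix a map that is quasisymmetric in the sense of \eqref{qsc} but not bi-Lipschitz. The simplest choice is the power map $g\colon[0,1]\to[0,1]$, $g(x)=x^{2}$, with inverse $g^{-1}(y)=\sqrt{y}$. For two adjacent intervals $[x-\delta,x]$ and $[x,x+\delta]$ with $x\geq\delta$ one computes
$$\frac{|g([x,x+\delta])|}{|g([x-\delta,x])|}=\frac{2x\delta+\delta^{2}}{2x\delta-\delta^{2}}=\frac{2x+\delta}{2x-\delta}\in(1,3],$$
so $g$ satisfies \eqref{qsc} with constant $3$, and an entirely analogous elementary estimate gives the same for $g^{-1}$. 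On the other hand $g^{-1}$ is not bi-Lipschitz, since $(\sqrt{y}-\sqrt{0})/(y-0)=y^{-1/2}\to\infty$ as $y\to0^{+}$.

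Next I would set $\mathcal{P}_\circ=\mathcal{D}$, the dyadic grid, and $\mathcal{P}_\star=g(\mathcal{D})$, that is $\mathcal{P}_\star^{k}=\{g(P)\colon P\in\mathcal{D}^{k}\}$. Both are quasisymmetric good grids in which every interval has exactly two children: for $\mathcal{P}_\circ$ this is trivial; for $\mathcal{P}_\star$ the good-grid bounds and condition \eqref{qsc} both follow from the quasisymmetry of $g$ (the displayed estimate above), while the two-children property is preserved because $g$ is an order-preserving homeomorphism. As the quasisymmetric maps required in the hypothesis of Proposition \ref{inc} I take $h_\circ=\mathrm{id}$ and $h_\star=g^{-1}$, so that $h_\circ(\mathcal{P}_\circ)=\mathcal{D}$ and $h_\star(\mathcal{P}_\star)=g^{-1}(g(\mathcal{D}))=\mathcal{D}$.

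Finally I would invoke Proposition \ref{inc}. Here $h_\star\circ h_\circ^{-1}=g^{-1}$ is not bi-Lipschitz, so condition C fails; since the three conditions are equivalent, condition B fails as well, i.e.\ $\mathcal{B}^s_{p,q}(\mathcal{P}_\star)\neq\mathcal{B}^s_{p,q}(\mathcal{P}_\circ)$. It then remains only to identify $\mathcal{B}^s_{p,q}(\mathcal{P}_\circ)$ with $\mathbb{B}^s_{p,q}$: the dyadic grid is a good grid for $[0,1]$ satisfying the conclusion of Proposition \ref{fgh} (the case $D=1$ of Section \ref{beshom}, with contraction ratio $1/2$ and exponent $\eta=1$), so by Definition \ref{besovhomog} and Proposition \ref{besovhom} one has $\mathcal{B}^s_{p,q}(\mathcal{D})=\mathbb{B}^s_{p,q}$ for $0<s<1/p$. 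Combining gives $\mathcal{B}^s_{p,q}(\mathcal{P}_\star)\neq\mathbb{B}^s_{p,q}$, as desired. The only genuinely non-bookkeeping step is the construction of a quasisymmetric but non-bi-Lipschitz map; once this is at hand, Proposition \ref{inc} does all the work. I would also emphasize the standing hypothesis $p\neq q$: when $p=q$ the space $\mathcal{B}^s_{p,p}$ is expected to be insensitive to such changes of grid, so no exotic example of this type arises in that borderline case.
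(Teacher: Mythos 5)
Your proof is correct, and its final two steps coincide with the paper's: both arguments apply Proposition \ref{inc} with $h_\circ=\mathrm{id}$, $\mathcal{P}_\circ=\mathcal{D}$ to convert ``$h_\star$ is not bi-Lipschitz'' into $\mathcal{B}^s_{p,q}(\mathcal{P}_\star)\neq\mathcal{B}^s_{p,q}(\mathcal{D})$, and both identify $\mathcal{B}^s_{p,q}(\mathcal{D})=\mathbb{B}^s_{p,q}$ via the dyadic grid being a grid as in Proposition \ref{fgh} with $D=1$, $\eta=1$ (and your restriction to $p\neq q$ is exactly the regime in which Proposition \ref{inc} is established, so flagging it is right, not a defect). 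Where you genuinely diverge from the paper is in how the quasisymmetric but non-bi-Lipschitz data is produced. You take the power map $g(x)=x^2$, the grid $\mathcal{P}_\star=g(\mathcal{D})$ and $h_\star=g^{-1}$: quasisymmetry is an explicit computation (your ratio $(2x+\delta)/(2x-\delta)\in(1,3]$, and for the inverse $(\sqrt{x}+\sqrt{x-\delta})/(\sqrt{x+\delta}+\sqrt{x})\in[\sqrt{2}-1,1]$), while failure of bi-Lipschitz is just the blow-up of $y^{-1/2}$ at the origin; note also that the $h_i$ with $h_i(\mathcal{P}_i)=\mathcal{D}$ are uniquely determined, so there is no ambiguity in testing condition C. The paper instead builds $\mathcal{P}_\star$ dynamically: it takes a $C^2$ expanding degree-two covering $g$ of the circle, lets $h_\star$ be the quasisymmetric conjugacy with the doubling map $f(x)=2x \bmod 1$ (so $\mathcal{P}_\star$ consists of Markov partitions and $h_\star(\mathcal{P}_\star)=\mathcal{D}$), arranges $Dg^n(p)\neq 2^n$ at some periodic point, and excludes bi-Lipschitz via Shub--Sullivan rigidity \cite{shusu}, since an absolutely continuous conjugacy between $C^2$ expanding circle maps would be $C^1$ and force equal multipliers. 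Your route is more elementary and self-contained, needing neither the existence of the conjugacy nor the rigidity theorem; the paper's route, at the cost of quoting these dynamical facts, exhibits the family of examples that motivates the section, namely that Markov grids of generic $C^2$ expanding maps give exotic spaces, which is the situation relevant to the transfer-operator applications \cite{smania-transfer} mentioned in the introduction.
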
 
\begin{proof} Let $\mathbb{S}^1=\mathbb{R}/\mathbb{Z}$ and consider the smooth expanding map $f\colon \mathbb{S}^1\rightarrow \mathbb{S}^1$ given by $f(x)=2x \ mod \ 1.$ Note that the dyadic grid on $[0,1]$  is a sequence of Markov partitions for $f$, that is, $f: P \rightarrow f(P)$ is a difeomorphism for every $P\in \mathcal{D}^{k}$, with $k\geq 1$, and $f(P)\in \mathcal{D}^{k-1}$.  Let $g\colon \mathbb{S}^1 \rightarrow \mathbb{S}^1$ be  a $2$ to $1$ $C^2$-covering of the circle such that $g(0)=0$. Then there is a quasisymmetric homeomorphism $h_\star \colon \mathcal{S}^1\rightarrow \mathbb{S}^1$ such that $h_\star(0)=0$ and conjugates $f$ and $g$, that is, and $h_\star\circ g=  f \circ h_\star$ on $\mathbb{S}^1$. In particular  there is a quasisymmetric grid $\mathcal{P}_\star$ such that $h_\star(\mathcal{P}_\star)=\mathcal{D}$. We can choose $g$ in such way that for some $n\geq 1$ there is some $n$-periodic point $p$ of $g$  ($g^n(p)=p$) satisfying 
$$ Dg^n(p)\neq Df^n(h_\star(p))=2^n.$$
We claim that $h_\star$ is {\t not} a bi-Lipchitz map. Indeed, otherwise the conjugacy $h_\star$ would be absolutely continuous with respect to the Lebesgue measure. But Shub and Sullivan \cite{shusu} proved that every absolutely continuous conjugacy between two $C^2$ expanding maps  on the circle is indeed a $C^1$ difeomorphism. But this is not possible since in this case it is easy to show that $Dg^n(p)= Df^n(h_\star(p))$ for {\it every} $n$ periodic point $p$ of $g$.  Since $h_\star$ is not a bi-Lipchitz map, Proposition \ref{inc} (take $h_\circ = Id$ and $\mathcal{P}_\circ=\mathcal{D}$)  implies that $\mathcal{B}^s_{p,q}(\mathcal{P}_\star)\neq \mathcal{B}^s_{p,q}(\mathcal{D})=\mathbb{B}^s_{p,q}$. 
\end{proof}

\begin{remark} \label{exotic2} One can ask  if $\mathcal{B}^s_{p,q}(\mathcal{P}_\star)$ above  is the Besov space of a homogenous space of the form $([0,1],\rho,m)$, where $m$ is the Lebesgue measure and $\rho$ is {\it some} metric on $[0,1]$ other than the euclidean one. We do not have a complete answer for that. However note that $\rho$ can not be the metric defined in (\ref{narch}). Indeed, if we consider a recalibration $\mathcal{G}$ of $\mathcal{P}_\star$ then we have that $\mathcal{G}$ is a good grid of intervals where every interval in the same level as more of less the same length. Using the same argument of the proof of Theorem \ref{mart} we conclude that $\mathcal{B}^s_{p,q}(\mathcal{G})$ is the Besov space $\mathbb{B}^s_{p,q}([0,1],d,m)=\mathcal{B}^s_{p,q}(\mathcal{D})$, where $d$ is the euclidean distance. 
\end{remark} 
\section{Good grid invariance for  $\mathcal{B}^s_{p,p}$} 

The next result tell us  that when $p=q$ the Besov spaces $\mathcal{B}^s_{p,p}$ are far more resilient to modifications  of the grid, what somehow remind us of Theorem 2.2 in Vodop$\prime$yanov \cite{podo}. See also Koch, Koskela, Saksman and Soto \cite{koskela}.

\begin{proposition} Consider the interval $I=[0,1]$ with the Lebesgue measure $m$. Let   $\mathcal{P}_\circ, \ \mathcal{P}_\star$ be  grids of $(I,m)$ such that 
\begin{itemize}
\item[A.] We have that $\mathcal{P}_\circ^k, \mathcal{P}_\star^k$are  families of intervals for every  $k$, 
\item[B.] For every $k\neq j$ we have $\mathcal{P}_\circ^k\cap  \mathcal{P}_\circ^j= \emptyset$ and
\item[C.]  We have  that  $\mathcal{P}_\star$  is a $(\Crr{menor},\Crr{maior})$-good grid. 
\end{itemize}
Then $$\mathcal{B}^s_{p,p}(\mathcal{P}_\circ)\subset \mathcal{B}^s_{p,p}(\mathcal{P}_\star)$$ and the inclusion is continuous.  As a consequence  $\mathcal{B}^s_{p,p}(\mathcal{P}_\star)= B^s_{p,p}$, where $B^s_{p,p}$ is the Besov space in the homogeneous  space $(I,m)$.
\end{proposition}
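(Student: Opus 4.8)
The plan is to keep the good grid $\mathcal{P}_\star$ on the \emph{target} side, where Theorem \ref{besov-alte} in \cite{smania-besov} supplies the oscillation characterization
$$|f|_{\mathcal{B}^s_{p,p}(\mathcal{P}_\star)}^p\sim\sum_{k}\sum_{P\in\mathcal{P}_\star^k}|P|^{-sp}osc_p(f,P)^p,$$
and to use the bare atomic definition on the source side (which is all I am entitled to, since $\mathcal{P}_\circ$ need not be good). Fixing $f\in\mathcal{B}^s_{p,p}(\mathcal{P}_\circ)$ together with a near-optimal representation $f=\sum_{Q\in\mathcal{P}_\circ}s_Q a_Q$, so that $\sum_Q|s_Q|^p\le 2|f|^p_{\mathcal{B}^s_{p,p}(\mathcal{P}_\circ)}$, the whole statement reduces to the one inequality $\sum_{P\in\mathcal{P}_\star}|P|^{-sp}osc_p(f,P)^p\le C\sum_{Q\in\mathcal{P}_\circ}|s_Q|^p$. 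The decisive feature of the regime $p=q$ is that both sides are plain weighted $\ell^p$ sums, with no intermediate $\ell^q(\ell^p)$ nesting; this is exactly what allows one to assemble individual atomic estimates without reindexing the levels (which is what the quasisymmetric hypothesis did in Proposition \ref{inc} and which is \emph{not} available here).

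The first ingredient is a uniform per-atom bound: for \emph{every} interval $Q\subset[0,1]$ and \emph{every} good grid $\mathcal{P}_\star$ of intervals, $|1_Q|_{\mathcal{B}^s_{p,p}(\mathcal{P}_\star)}\le C|Q|^{1/p-s}$, i.e. each $a_Q$ lands in the target space with normalized norm $\le C$. I would prove this straight from the oscillation characterization, in the spirit of Proposition \ref{indicator}: $osc_p(1_Q,P)\neq 0$ only when $P\supsetneq Q$, in which case $osc_p(1_Q,P)^p\le\min(|Q|,|P|-|Q|)\le|Q|$, or when $P$ crosses an endpoint of $Q$, in which case $osc_p(1_Q,P)^p\le\min(|P\cap Q|,|P\setminus Q|)\le|P|$. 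For a good grid there are $O(1)$ such $P$ per level, their sizes vary geometrically along the (at most three) chains involved, and summing $|P|^{-sp}$ against these oscillations produces two geometric series which converge \emph{precisely because} $0<sp<1$: the sum over $P\supset Q$ converges since $sp>0$, the sum over crossing $P$ since $1-sp>0$, with total $\le C|Q|^{1-sp}$.

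The main obstacle is the \emph{assembly}, i.e. passing from this uniform bound on each $a_Q$ to the bound on $f=\sum_Q s_Q a_Q$ with an $\ell^p$ rather than an $\ell^1$ loss. The triangle inequality $osc_p(f,P)\le\sum_Q|s_Q|\,osc_p(a_Q,P)$ is useless, since raising it to the $p$-th power and summing over $P$ only controls $\big(\sum_Q|s_Q|\big)^p$; one genuinely needs near-orthogonality. Writing the change of atoms as $a_Q=\sum_P s_{P,Q}a^\star_P$, the point is that the matrix $(s_{P,Q})$ is \emph{almost diagonal} in the sense of Frazier and Jawerth \cite{fj}: the per-atom computation above shows that its entries decay geometrically in the scale difference between $P$ and $Q$ (on both sides, again thanks to $0<s<1/p$) and are spatially localized, while the fact that all sets involved are \emph{intervals} gives $O(1)$ overlap among $\mathcal{P}_\circ$- and $\mathcal{P}_\star$-intervals of comparable size meeting a given one. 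Boundedness of almost-diagonal matrices on the Besov sequence space $b^s_{p,p}$, which for $p=q$ is a single weighted $\ell^p$, then yields $\sum_P\big|\sum_{Q}s_Q s_{P,Q}\big|^p\le C\sum_Q|s_Q|^p$, the desired inequality. A more self-contained variant first reduces to a dyadic target through the elementary estimate $osc_p(f,P)^p\le|P|^{-1}\int_P\int_P|f(x)-f(y)|^p\,dx\,dy$ together with $\sum_{P\in\mathcal{P}_\star}|P|^{-sp-1}\int_P\int_P|f(x)-f(y)|^p\,dx\,dy\le C[f]^p$, where $[f]^p=\int_0^1\int_0^1|f(x)-f(y)|^p|x-y|^{-1-sp}\,dx\,dy$; the heart is then the grid-free $\ell^p$-synthesis inequality $[f]^p\le C\sum_Q|s_Q|^p$ for characteristic functions of the nested family $\mathcal{P}_\circ$. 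I expect this synthesis/almost-orthogonality step to be the only genuinely delicate point of the proof.

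Finally, the stated consequence follows by applying the inclusion twice. Taking $\mathcal{P}_\circ=\mathcal{D}$, which consists of intervals, has pairwise disjoint levels, and is itself a good grid, gives $\mathcal{B}^s_{p,p}(\mathcal{D})\subset\mathcal{B}^s_{p,p}(\mathcal{P}_\star)$. Since any good grid with child/parent ratios $<1$ automatically satisfies hypotheses A and B, one may instead take the source to be $\mathcal{P}_\star$ and the target to be $\mathcal{D}$, obtaining the reverse inclusion. Hence $\mathcal{B}^s_{p,p}(\mathcal{P}_\star)=\mathcal{B}^s_{p,p}(\mathcal{D})$ with equivalent norms, and by the identification of Part I (Proposition \ref{besovreal} and the discussion of the case $[0,1]^D$ with $D=1$ in Section \ref{beshom}) the latter is exactly the Besov space $B^s_{p,p}$ of the homogeneous space $([0,1],m)$.
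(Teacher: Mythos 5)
Your skeleton is sensible, and the pieces you actually prove are fine: the per-atom bound $|1_Q|_{\mathcal{B}^s_{p,p}(\mathcal{P}_\star)}\le C|Q|^{1/p-s}$ for an arbitrary interval $Q$ (chains of grid intervals through an endpoint have geometrically varying lengths, so both series converge when $0<sp<1$), the domination of the $\mathcal{P}_\star$-oscillation sum by the Gagliardo seminorm $[f]^p$, and the two-sided application giving $\mathcal{B}^s_{p,p}(\mathcal{P}_\star)=\mathcal{B}^s_{p,p}(\mathcal{D})=B^s_{p,p}$. But the step you defer -- the $\ell^p$ assembly -- is not merely ``the only genuinely delicate point''; in the generality to which you reduce it, it is false, so the reduction itself discards a hypothesis that cannot be discarded. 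Take $p>1$, let $m$ be large, and set $Q_i=[0,\frac12+\frac1{4i}]$, $i=1,\dots,m$. These are nested intervals of comparable length, and they can be placed in pairwise distinct levels of a nested sequence of interval partitions of $[0,1]$ satisfying A and B (put $Q_i$ at level $i$, subdivide every other element properly at each step, and keep refining afterwards to generate the Borel sets); nothing in your argument excludes this, because you use only nestedness, ``interval-ness'' and $\ell^p$ coefficients. With $s_{Q_i}=1$ one has $\sum_i|s_{Q_i}|^p=m$, while $f=\sum_i|Q_i|^{s-1/p}1_{Q_i}$ satisfies $f\ge m$ on $[0,\frac12]$ and $f=0$ on $[\frac34,1]$, so that $[f]^p\ge \frac18 m^p$. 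Hence your ``grid-free $\ell^p$-synthesis inequality'' $[f]^p\le C\sum_Q|s_Q|^p$ fails for nested families, and the same configuration destroys the input to your almost-diagonality route: the single dyadic interval $P=[0,\frac12]$ is a maximal dyadic subinterval of \emph{every} $Q_i$, all of comparable size, so the row of the matrix $(s_{P,Q})$ at $P$ contains $m$ entries of unit order and the claimed ``$O(1)$ overlap among $\mathcal{P}_\circ$-intervals of comparable size meeting a given one'' is exactly what a non-good grid $\mathcal{P}_\circ$ need not satisfy (consecutive parent/child ratios may tend to $1$). Note also that your per-atom oscillation estimate only bounds a norm; it does not by itself produce a decomposition $a_Q=\sum_P s_{P,Q}a_P^\star$ with localized, geometrically decaying entries -- that decomposition must be constructed separately.

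This is precisely the difficulty the paper's proof is built around, by a mechanism absent from your proposal: the re-leveling invariance special to $p=q$. Since for $p=q$ the norm is a single $\ell^p$ sum over $\bigcup_k\mathcal{P}^k$ and the levels are pairwise disjoint families (hypothesis B), the space is unchanged when the atoms are redistributed into new levels; the paper therefore reorganizes \emph{both} grids into grids whose $k$-th level consists of all intervals of length in $(2^{-k-1},2^{-k}]$ (Step I), so that source and target become aligned by scale before any atom-by-atom estimate is attempted. Each source atom is then decomposed into the maximal $\mathcal{P}_\star$-intervals it contains, with a bounded number of intervals per level and mass decaying like $2^{-(j-j_0)(1-sp)}$ (Steps II--III), and the $\ell^p$ assembly is delegated to the transfer proposition (Proposition besov-trans.C) of \cite{smania-besov}, whose hypotheses are exactly these per-level cardinality and decay bounds relative to an increasing alignment of levels. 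In short: your observation that $p=q$ flattens both norms into plain $\ell^p$ sums is the right starting point, but you use it only passively; the paper uses it actively, to destroy the uncontrolled level structure of $\mathcal{P}_\circ$, and without that step (or a genuine substitute that confronts slowly-shrinking chains) your argument cannot be completed -- except in the case $p=q=1$, where the triangle inequality you dismiss does suffice together with your per-atom bound.
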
 
\begin{proof}We will prove it in three steps. \\ \\
\noindent{\it Step I.} Note that given two grids  $\mathcal{G}_1$ and $\mathcal{G}_2$ such that 
$$\mathcal{G}_i^k\cap \mathcal{G}_i^j=\emptyset$$
for every $k\neq j$, $i=1,2$, and
$$\bigcup_k \mathcal{G}_1^k=\bigcup_k \mathcal{G}_2^k$$
then 
$\mathcal{B}^s_{p,p}(\mathcal{G}_1)=\mathcal{B}^s_{p,p}(\mathcal{G}_2)$
and the corresponding norms are equivalent. 

Define $\mathcal{G}_i=(\mathcal{G}_i^k)_k$, with $i=\star,\circ$, in the following way.
$$\mathcal{G}_i^k=\{ P\in  \bigcup_j \mathcal{P}_i^j \ s.t. \     \frac{1}{2^{k+1}}  <  |P|\leq \frac{1}{2^k}  \}.$$
Then $\mathcal{G}_i$ is a grid and consequently $\mathcal{B}^s_{p,p}(\mathcal{G}_i)=\mathcal{B}^s_{p,p}(\mathcal{P}_i)$ and the corresponding norms are equivalent.  It remains to show the (continuous) inclusion $\mathcal{B}^s_{p,p}(\mathcal{G}_\circ)\subset \mathcal{B}^s_{p,p}(\mathcal{G}_\star)$.\\ \\

\noindent {\it Step II.}  Since $\mathcal{P}_\star$ is a good grid there is $r \geq1$ such that for every $j\in \mathbb{N}$ and $x\in I$ there is $P\in \mathcal{G}_\star^i$, with $i\in [j+1,j+r]$, satisfying  $x\in P$. \\

\noindent {\it Step III.}  Given $Q\in \mathcal{G}^{k_0}_\circ$, define 
$$j_0=\min \{ j \in \mathbb{N},\   \text{ there is } P \in \mathcal{G}_\star^j \text{ satisfying } P\subset Q  \}.$$
By Step II  we have that $ k_0+r+2\leq   j_0 \leq k_0$. Let
$$\tilde{\mathcal{F}}^{j_0}(Q)=\{ P\in  \mathcal{G}^{j_0}_\star, \ int \ P\subset Q \}$$
and
$$A_{j_0}=\bigcup_{P\in \tilde{\mathcal{F}}^{j_0}(Q)} \overline{P}$$
and by induction define
$$\tilde{\mathcal{F}}^{j}(Q)=\{ P\in  \mathcal{G}^j_\star, \ int \ P\subset Q\setminus \cup_{i<j} A_i  \}.$$
and
$$A_j= \bigcup_{P\in \tilde{\mathcal{F}}^{j}(Q)} \overline{P}$$
Since $\mathcal{P}_\star$ is a nested sequence of partitions we can find a subfamily $\mathcal{F}^{j}(Q)\subset \tilde{\mathcal{F}}^{j}(Q)$ such that 
$$A_j=\bigcup_{P\in \mathcal{F}^{j}(Q)} \overline{P}$$
and $\mathcal{F}^{j}(Q)$ is a family of pairwise disjoint intervals. By definition $\{A_j\}_i$ is a family of sets with  pairwise  disjoint interior. Note that $$D_j=\cup_{i\leq j} A_i.$$ can be described as
$$D_j = \{ x\in Q \text{ s.t. there is $P\in  \mathcal{G}^i_\star$, with $i\leq j$, such that $x\in P\subset Q$}\}.$$
Let $[a_j,b_j]$ be the convex hull of $D_j$.
If $x \in [a_j,b_j]\setminus D_j$ then 
$$dist(x, \{a_j, b_j\})> \frac{1}{2^j},$$
so by Step II there is $P\in \mathcal{G}_\star^i$, with $i \in [j+1,j+r]$, such that $x\in P\subset [a_j,b_j]\subset Q.$ Note that $P\cap W=\emptyset$ for every $W\in \tilde{\mathcal{F}}^{j}(Q)$, otherwise we would have  $x \in P\subset W\subset A_j$. In particular $x\in \mathcal{F}^i(Q)$. Consequently
$$[a_j,b_j]\subset  D_{j+r}=\cup_{i\leq j+r} A_i.$$
Additionally if $y\in \{a_j, b_j\}$ satisfies
$$dist(y,\partial Q) > \frac{1}{2^{j-r}}$$
then by Step III there is $P\in \mathcal{G}_\star^i$, with $i \in [j-r+1,j]$, such that $P \subset Q$  and $P\cap ([0,a_j[\cup ]b_j,1])\neq \emptyset,$ which contradicts the inclusion $D_j\subset  [a_j,b_j]$. We conclude that
$$m(Q\setminus D_{j+r}) < \frac{1}{2^{j-r-1}},$$
so
$$\#\mathcal{F}^{j+r+1}(Q)\leq 2^{j+r+2} m(Q\setminus D_{j+r}) \leq 2^{2r+1}.$$
 and for every $\ell$ satisfying $j_0\leq \ell \leq j_0+r$ we have 
 $$\#\mathcal{F}^{\ell}(Q)\leq 2^{\ell+1} m(Q)\leq 2^{r+1},$$
 so $\#\mathcal{F}^{\ell}(Q)\leq  2^{2r+1}$ for every $\ell\geq j_0$. 
If  $a_W$ is the canonical  Souza's atom on $W$ and $Q\in \mathcal{G}^{k_0}_\circ$ we have
$$a_Q=\sum_{j\geq j_0} \sum_{P\in \mathcal{F}^j(Q)}\Big(\frac{|P|}{|Q|}\Big)^{1/p-s} a_P,$$
and for every $j\geq j_0$
$$\sum_{P\in \mathcal{F}^j(Q)} \Big(\frac{|P|}{|Q|}\Big)^{1-sp} \leq   2^{2r+1} \Big( \frac{2^{-j}}{2^{-(k_0+1)}} \Big)^{1-sp}\leq 2^{2r+1-(r+2)(1-sp)} 2^{-(j-j_0)(1-sp)}$$
By Propositon \ref{besov-trans}.C in \cite{smania-besov} we have that $\mathcal{B}^s_{p,p}(\mathcal{G}_\circ)\subset \mathcal{B}^s_{p,p}(\mathcal{G}_\star)$ and moreover  this inclusion is continuous.  In particular $\mathcal{B}^s_{p,p}(\mathcal{G}_\star)=\mathcal{B}^s_{p,p}(\mathcal{D})$, and  by Propostition \ref{besovreal} we have $\mathcal{B}^s_{p,p}(\mathcal{D})=B^s_{p,p}$.
\end{proof}

\bibliographystyle{abbrv}
\bibliography{bibliografiab}

\end{document}